\newtheorem{lemma}{Lemma}[section]
\newtheorem{theorem}[lemma]{Theorem}
\newtheorem{corollary}[lemma]{Corollary}
\newtheorem{proposition}[lemma]{Proposition}
\newtheorem{definition}[lemma]{Definition}
\theoremstyle{remark}
\newtheorem{rem}[lemma]{Remark}
\newcommand{\nn}{\mathbb{N}}
\newcommand{\ee}{\mathbb{E}}
\newcommand{\rr}{\mathbb{R}}
\newcommand{\zz}{\mathbb{Z}}
\newcommand{\cc}{\mathbb{C}}
\newcommand{\one}{\mathbb{1}}
\newcommand{\zc}{\mathcal{Z}}
\newcommand{\qc}{\mathcal{Q}}
\newcommand{\pc}{\mathcal{P}}
\DeclareMathOperator{\pois}{Pois}
\DeclareMathOperator{\perm}{perm}
\newcommand{\logg}{{\log_*}}
\author[J. Buckley]{Jeremiah Buckley}
\address{Department of Mathematics\\ King's College London\\ United Kingdom}
\email{jeremiah.buckley@kcl.ac.uk}
\author[F. Marceca]{Felipe Marceca}
\address{Faculty of Mathematics \\
	University of Vienna \\
	Oskar-Morgenstern-Platz 1 \\
	1090 Vienna, Austria}
\email{felipe.marceca@univie.ac.at}
\author[J. Singer]{Joaqu\'in Singer}
\address{Departamento de Matem\'{a}tica, Facultad de Ciencias Exactas y Naturales, Universidad de Buenos Aires, (1428) Buenos Aires,
Argentina and IMAS-CONICET}
\email{jsinger@dm.uba.ar}
\title[Sampling properties of the zeroes of the GEF]{Sampling properties of the zeroes of the Gaussian entire function}
\thanks{This project was initiated during a research visit supported by LMS Scheme 5: 52309. The first two authors gratefully acknowledge support from EPSRC: NIA EP/V002449/1. F.M. was also supported by the Austrian Science Fund (FWF) 10.55776/PAT1384824 and 10.55776/Y1199. 
For open access purposes, the authors have applied a CC BY public copyright license to any author-accepted manuscript
version arising from this submission.}
\subjclass[2020]{30H20, 94A20, 60G15, 30C15}
\keywords{Fock spaces, sampling, Gaussian analytic functions}
\begin{document}

\begin{abstract}
We study sampling properties of the zero set of the Gaussian entire function on Fock spaces. Firstly, we relax Seip and Wallst\'en's density and separation conditions for sampling sets on Fock spaces to obtain weighted inequalities for sets that are not necessarily sampling. On the probabilistic front, we estimate the number of zeroes of the Gaussian entire functions that are close to each other. We use these to prove random sampling inequalities for polynomials of degree at most $d$ using ${d}+o(d)$ points, and show that, with high probability, the sampling constants grow slower than $d^\varepsilon$ for any $\varepsilon>0$.
In particular, we recover a result from Lyons and  Zhai in the case of the Gaussian entire function, where it is shown that the zeroes are (almost surely) a uniqueness set for the Fock space.
\end{abstract}

\maketitle

\section{Introduction}

\subsection{Statement of results}
Sampling theory deals with the (stable) reconstruction of a function $f$ from given samples $f(x_j)$ on some set $X=\{x_j\}_j\subseteq \rr^d$. We are interested in random sampling, that is, whenever $X$ is a random configuration of points. We work in $\rr^2$, which we identify with $\cc$. 

The Fock space, $\mathcal{F}_\alpha^p$, is defined for $\alpha>0$ and $1\le p<\infty$ to be the space of entire functions satisfying 
\begin{align}
    \label{eqf}
    \|f\|_{p,\alpha}^p=\frac{p \alpha}{2\pi}\int_{\cc} |f(z)|^p e^{-p \alpha|z|^2/2} dA(z)<\infty,
\end{align}
where $dA(z)$ is the Lebesgue measure on $\cc$. In what follows we set $\alpha=1$ and simply write $\mathcal{F}^p=\mathcal{F}_\alpha^p$ and $\|f\|_{p,\alpha} = \|f\|_p$. Naturally, all our results can be rescaled to cover the case $\alpha\neq 1$. A discrete set $\zc\subseteq\cc$ is a sampling set for $\mathcal{F}^p$ if there exist $A,B>0$ such that for every $f\in \mathcal{F}^p$,
\begin{align}
    \label{eqsa}
    A\|f\|_{p}^p\le \sum_{z\in\zc} |f(z)|^p e^{-p|z|^2/2} \le B\|f\|_{p}^p.
\end{align}

In \cite{Se,SeWa}, Seip and Wallst\'en characterised sampling sets $\zc$ in terms of density and separation. To be more precise, we say a set $\zc$ is separated if the distances between points in $\zc$ is uniformly bounded from below, and define its lower Beurling-Landau density as
\[D^-(\zc)=\liminf_{r\to\infty}\inf_{z\in\cc}\frac{\#(\zc\cap B_r(z))}{\pi r^2}.\]
As shown in \cite{Se,SeWa}, a set is sampling for $\mathcal{F}^p$ if and only if it is a finite union of separated sets and contains a separated subset $\zc'$  with density  $D^-(\zc')>\tfrac{1}{\pi}$. This value is referred to as the critical density. Additionally, $\zc'$ can be chosen to be a (separated) perturbation of a lattice $\Lambda$  with density  $D^-(\Lambda)>\tfrac{1}{\pi}$, where the perturbations are uniformly bounded (see for example \cite[Lemma 4.31]{Zhu}). So, sampling sets are a finite union of separated sets, one of which is a uniformly perturbed lattice above the critical density.

The quality of a sampling result can be measured in terms of how close to the critical density one can get (i.e., using a minimal number of points for sampling), and how close the ratio $B/A$ is to unity (here,  $A$ and $B$ are the ``sampling constants'' from \eqref{eqsa}); improving these quantities tends to improve the numerical efficiency of an algorithm to reconstruct a function from its samples. However, these are two competing factors since increasing the density of points (a.k.a. oversampling) tends to improve the ratio $B/A$. 

There are already a number of results on random sampling, in various contexts (see Section \ref{secba} for more details).  These results often use processes with embedded independence, such as Poisson or binomial processes. This is mainly for technical reasons and simplicity of implementation. In this work we shall instead be interested in processes exhibiting a form of local repulsion between points, which are therefore more rigid and show less clumping than the above processes. We are mainly motivated by certain sampling problems where the underlying point process is a predetermined repulsive point process, and we view our work as a first step towards understanding such problems. Furthermore, given that separation of points plays an important role in sampling, it is natural to consider repulsive processes.

Returning to the Fock space, for many natural choices of (locally) repulsive point processes, the density and separation requirements for sampling are not satisfied. This is because, if the correlation between points of the process decays with distance, then this gives rise to some form of weak independence over long distances. Heuristically, this independence guarantees that global conditions fail almost surely in some region. In particular, there will be arbitrarily large regions with no points or with too many points that are close together, violating the density and separation conditions that characterize sampling sets. Since one cannot expect global sampling inequalities, one has to work locally. This can be achieved by restricting the analysis to a finite dimensional subspace of functions or, more generally, a subset of localised functions that can be well approximated by a finite dimensional subspace. This is known as relevant sampling, see, for example, \cite{BaGr3}.

We focus on the zero set $\zc^{(L)}$ of the Gaussian entire function (GEF) given by 
\begin{align}
\label{eqgef}
    F_L (z)=F(z)=\sum_{n=0}^\infty \zeta_n \frac{(\sqrt{L} z)^n}{\sqrt{n!}},
\end{align}
where $\zeta_n$ are iid standard complex normal variables and $L>0$ is a scaling parameter. This is a particular case of a Gaussian analytic function (GAF), see \cite{HKPV} and Section~\ref{subgaf}. The distribution of the zeroes is invariant under translations and rotations, and $F_L$ is (essentially) the only GAF with this property. Additionally, almost surely all the zeroes are simple, they exhibit local repulsion, and are described in \cite{SoTs2} as a perturbed lattice. In \cite{LyZh}, Lyons and Zhai provide sufficient conditions for the zeroes of a GAF to be a uniqueness set in $\mathcal{F}^p_\alpha$ (which is a necessary condition to be  sampling) and related Bergman spaces (see also \cite{BuQiSh} for an analogous result for determinantal point processes). While this suggests that $\zc^{(L)}$ might be a sampling set for the Fock space, as discussed above, the global characterisation of sampling cannot hold. This is because the decay of the (normalised) covariance kernel
\begin{equation*}
    \frac{|\ee[F_L(z)\overline{F_L(w)}]|}{\sqrt{\ee[|F_L(z)|^2]\ee[|F_L(w)|^2]}}=e^{-L|z-w|^2/2}
\end{equation*}
means that the zeroes can be thought of (at least heuristically) as being independent on large scales. Nevertheless, we are able to work locally and provide quantitative estimates that, in particular, recover the uniqueness result from \cite{LyZh} for the GEF (see Corollary \ref{corolz}). We say $F_L$ has intensity $L/\pi$ which is the expected number of zeroes per unit area. For our local sampling inequalities, a role similar to that of the lower density in the global deterministic case will be played by the intensity\footnote{For instance, one can use ergodicity to show that, almost surely, $\frac{\#(\zc^{(L)}\cap B_r(z))}{\pi r^2}\to L/\pi$ as $r\to\infty$. Notice, however, that this statement is local since we are not taking the infumum over $z\in\cc$ before taking the limit.}, so we consider $L>1$. The choice of the GEF is due to some technical advantages, including the fast decay of correlations and, crucially, the perturbed lattice description. However, this work is meant as a first step towards the investigation of more general GAFs and other natural random processes exhibiting local repulsion such as determinantal point processes and Coulomb gases. More generally, we are interested in random sampling both as an alternative approach to sampling problems where few deterministic results are available, as well as a tool to study specific examples where the random process used for sampling is given or even is itself the object of study.

Denote $\logg(x)=\max\{1,\log(x)\}$. Our main result reads as follows.
\begin{theorem}
\label{teofull}
    Let $1 < L <\infty$, $1\le p <\infty$ and consider $\zc^{(L)}$, the zero set of $F_L$. There exist $c,C,C'>0$ depending on $L$ and $p$ satisfying the following: If $R\ge C$ then, outside an event of probability at most $e^{-c\logg R\logg\logg R}$,  
 \begin{align}
 \label{eqfull}
      \|f\|_{p}^p \le C'  \sum_{z\in \zc^{(L)}}\omega(|z|+R)|f(z)|^pe^{- p|z|^2/2}
 \end{align}
 for every $f\in \mathcal{F}^p$, where $\omega(r)=e^{C\sqrt{\logg r}\logg^6 \logg r}$. Moreover, if $L<2$ then one can choose the constants $c$ and $C$ depending only on $p$ and take $\logg\logg C'=C''/(L-1)$ with $C''\ge 1$ depending only on $p$. \end{theorem}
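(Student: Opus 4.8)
The plan is to split the statement into a deterministic weighted sampling inequality---valid for any configuration that is ``locally lattice-like'' in a quantitative sense---together with a probabilistic statement that $\zc^{(L)}$ has this property almost surely at the required rate, and then to glue the two with a union bound over dyadic annuli. The elementary normalisation underpinning everything is that, for $z_0\in\cc$, the map $f\mapsto\widehat f$, $\widehat f(w)=e^{-|z_0|^2/2}f(z_0+w)e^{-\overline{z_0}w}$, is an isometry of $\mathcal F^p$ which carries the zero set of $f$ onto its translate by $-z_0$ and satisfies $|\widehat f(w)|e^{-|w|^2/2}=|f(z_0+w)|e^{-|z_0+w|^2/2}$; so the behaviour of the weighted samples of $f$ near $z_0$ is exactly that of $\widehat f$ near the origin, where an $\mathcal F^p$ function restricted to $B_s(0)$ equals its degree-$O(s^2)$ Taylor polynomial up to a weighted error of size $e^{-cs^2}\|f\|_p$.

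\textbf{Deterministic step.} I would first prove (this is the relaxation of the Seip--Wallst\'en conditions \cite{Se,SeWa} announced in the abstract) an inequality of the following shape: if $\zc$ is a finite union of separated sets and $\rho\colon\cc\to[1,\infty)$ is such that for every $z$ the points of $\zc$ inside $B_{3\rho(z)}(z)$ sample polynomials of degree $\le\rho(z)^2$ on that disk, then $\|f\|_p^p\le C'\sum_{z\in\zc}w(z)\,|f(z)|^pe^{-p|z|^2/2}$ for all $f\in\mathcal F^p$, with $w(z)\le e^{C\rho(z)^2}$ for an absolute $C$. The proof partitions $\cc$ into boxes $Q$ of side comparable to $\rho$ on $Q$, bounds $\|f\|_p^p\le\sum_Q\int_Q|f|^pe^{-p|\cdot|^2/2}$, re-centres each term at the centre of $Q$ via the isometry above, and closes by a Remez/Marcinkiewicz--Zygmund estimate: a spread-out family of $\gtrsim\rho^2$ points controls a degree-$O(\rho^2)$ polynomial on the box, the $e^{O(\rho^2)}$ loss coming from a Remez-type step that recovers the weighted mass of a possible hole of radius $\rho$ from the values on the surrounding annulus; the Taylor-tail errors are $e^{-c\rho^2}\|f\|_p^p$ and summable, and an overcrowded box is first thinned to a separated sub-configuration of the right local density, which only helps a lower bound. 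The residual multiplicative constant $C'$ is controlled by the sampling constant of the underlying super-critical lattice (equivalently of a near-critical uniformly perturbed lattice, cf. \cite[Lemma 4.31]{Zhu}), and this is the only place where $L$ enters: as $L\downarrow1$ the density margin above the critical value $1/\pi$ shrinks like $L-1$, and the bound we obtain for that constant is $\exp\exp(O(1/(L-1)))$, which yields $\logg\logg C'=C''/(L-1)$ while leaving the weight exponent $C$ free of $L$.

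\textbf{Probabilistic step.} Next I would show that, once $R$ exceeds a constant (depending on $L$ and $p$, and only on $p$ when $L<2$), outside an event of probability $\le e^{-c\logg R\logg\logg R}$ the set $\zc^{(L)}$ meets the hypothesis of the deterministic step with $\rho(z)\le C\bigl(\logg(|z|+R)\,\logg^{12}\logg(|z|+R)\bigr)^{1/4}$. This combines the Sodin--Tsirelson \cite{SoTs2} perturbed-lattice description of $\zc^{(L)}$ with the GEF hole-probability bound $\mathbb{P}\{\zc^{(L)}\cap B_\rho(z)=\emptyset\}\le e^{-cL^2\rho^4}$ (see \cite{HKPV}), together with overcrowding bounds for the number of zeros in a disk and the close-pair estimates established below in this paper; jointly these show that the ``local defect radius'' near a point of modulus $r$ exceeds $\rho$ with probability at most $e^{-cL^2\rho^4}$. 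A union bound over an $O(1)$-net of $B_{2^{j+1}}\setminus B_{2^j}$ (of cardinality $\asymp2^{2j}$) and over dyadic scales $2^j\ge\max(R,1)$ then gives exceptional probability $\lesssim\sum_{2^j\ge R}2^{2j}e^{-cL^2\rho(2^j)^4}\le e^{-c'\logg R\logg\logg R}$ for the stated $\rho$---the extra $\logg\logg$ factor inside $\rho$ being precisely what turns a crude polynomial-in-$R$ bound into this faster decay---and the same $\rho$ also handles the ball $B_R$ itself, where $|z|+R\asymp R$; that last step is why the exceptional probability must depend on $R$, since a bounded hole near the origin has fixed positive probability. Finally $w(z)\le e^{C\rho(z)^2}\le\omega(|z|+R)$ after enlarging the constant in $\omega$, which is \eqref{eqfull}; for $1<L<2$ the constants $c,C$ here are $L$-free (only $L^2\ge1$ is used), so the sole $L$-dependence of the whole argument is the double exponential in $C'$.

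\textbf{Main obstacle.} The delicate part is the deterministic inequality: one must secure the \emph{quadratic} exponent $w(z)\asymp e^{C\rho(z)^2}$---it cannot be taken linear, as testing against a reproducing kernel $K_{z_1}$ with $z_1$ at the centre of a hole of radius $\rho$ already forces a weight $\gtrsim e^{c\rho^2}$ there---and one must do so for arbitrary $f\in\mathcal F^p$ by localisation rather than only for polynomials, all the while confining the near-critical blow-up to the single constant $C'$ so that the weight exponent stays uniform in $L$. The probabilistic ingredients are by contrast either classical for the GEF (hole and overcrowding probabilities) or the separate clustering estimate of this paper, and the remaining effort there is the scale-by-scale bookkeeping that reproduces the exact rates $e^{-c\logg R\logg\logg R}$ and $\omega(r)=e^{C\sqrt{\logg r}\logg^6\logg r}$.
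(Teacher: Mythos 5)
Your two-step architecture (a deterministic weighted sampling inequality plus a probabilistic verification that $\zc^{(L)}$ satisfies it with high probability) matches the paper's, which deduces Theorem~\ref{teofull} from Proposition~\ref{propseip} and Lemma~\ref{lemprob}; your weight exponent $\rho^2\asymp\sqrt{\logg}\,\logg^6\logg$, your $\rho^4$ matching the GEF hole-probability rate, the dyadic union bound producing $e^{-c\logg R\logg\logg R}$, and the $\logg\logg C'\asymp 1/(L-1)$ dependence all agree. However, your deterministic step is genuinely different: you tile the plane into boxes of side $\rho$, re-center by the Bargmann--Fock shift, Taylor-approximate to degree $O(\rho^2)$, and close with a local Marcinkiewicz--Zygmund/Remez step. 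The paper instead proves Proposition~\ref{propseip} \`a la Seip--Wallst\'en: it builds the auxiliary entire function $g$ in~\eqref{gdef} (an infinite product over the well-separated zeroes), obtains lower bounds for $g$ via Lemma~\ref{lemlow}, establishes the Lagrange-type reconstruction formula of Lemma~\ref{lemseip}, and then applies H\"older's inequality across lattice translates. The paper itself notes in Section~\ref{sectec} that the tiling approach (attributed to \cite{BeOC}) ``may offer another way of producing similar estimates,'' so your route is a legitimate alternative, not the paper's.

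There is, however, a genuine gap in your deterministic step: the claim that ``the Taylor-tail errors are $e^{-c\rho^2}\|f\|_p^p$ and summable'' is false as stated. The number of boxes $Q$ with $\mathrm{dist}(Q,0)\asymp r$ is of order $r/\rho(r)$ per unit radial length, so
\[
\sum_Q e^{-c\rho_Q^2}\ \gtrsim\ \int_1^\infty \frac{r}{\rho(r)^2}\,e^{-c\rho(r)^2}\,dr\ =\ +\infty,
\]
because $\rho(r)^2\asymp\sqrt{\logg r}\,\logg^6\logg r$ is sub-logarithmic and hence $e^{-c\rho(r)^2}$ is not even a negative power of $r$. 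You cannot absorb $\bigl(\sum_Q e^{-c\rho_Q^2}\bigr)\|f\|_p^p$. To repair this you need the Taylor-tail error in box $Q$ to be controlled by the \emph{local} weighted mass, i.e.\ something like $e^{-c\rho_Q^2}\int_{B_{K\rho_Q}(z_Q)}|f|^pe^{-p|\cdot|^2/2}\,dA$ for a fixed large $K$; this can be arranged (for $p=2$ it reduces to the eigenvalue comparison $\lambda_n(\rho)\leq e^{-cd}\lambda_n(\sqrt{Kd})$ for $n>d\geq 4\rho^2$ using~\eqref{eqei}, and the general $p$ needs a subharmonicity/interpolation argument), after which the bounded overlap of the family $\{B_{K\rho_Q}(z_Q)\}$ gives a total error $\lesssim e^{-c\rho_{\min}^2}\|f\|_p^p$ that is absorbable once $\rho_{\min}\geq T(R)$ is sufficiently large. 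This local-vs-global bookkeeping is precisely what the interpolation-formula route sidesteps. A secondary point you should make explicit is where the weight exponent $\rho^2$ picks up the contribution of close zero pairs (not just holes): in the paper the extra $\logg^4 T$ factor in $T^2\logg^4 T$ comes from the product-separation threshold $S(r)$ in condition~\ref{its}, and your thinning-and-Remez step must be shown to produce the same loss and no more.
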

 \begin{rem}\label{rerem}Let us make a few comments:
    \begin{enumerate}[label=(\roman*)]
        \item Theorem \ref{teofull} should be regarded as a local sampling inequality in the following sense. If one restricts to a family of functions that are sufficiently concentrated on a bounded set, then the sum over $\zc^{(L)}$ in \eqref{eqfull} can be effectively truncated and the weight $\omega$ replaced by a constant. We carry out this procedure for polynomials of bounded degree in Theorem \ref{teosam}.
        \item Although \eqref{eqfull} only provides a weak version of the left-hand side of \eqref{eqsa}, a converse inequality follows easily from the subharmonicity of $|f|^p$ and probabilistic estimates for the number of points in balls from \cite{NaSoVo}. Again, we carry this out directly for polynomials in Theorem \ref{teosam} (see also Proposition \ref{propbe}).
        \item The weight $\omega(r)$ grows quite slowly (it is $O(r^\varepsilon)$ for every $\varepsilon>0$). Moreover $\omega(r)$ is optimal except for $\logg \logg r$ terms in the exponent (see Remark \ref{remopt}).
        \item  The constant $C'$ is superfluous if an explicit dependence on $L$ is not required. However, tracking the behaviour of the constants as $L\to 1$ is important to quantify oversampling, since the closer $L$ is to 1, the fewer the number of points we are using for sampling (see Theorem \ref{teosam}). On the other hand, tracking the dependence as $L\to\infty$ is not really relevant to our argument and we impose the arbitrary upper bound $L<2$.
        \item When rescaling for $\mathcal{F}_\alpha^p$ with $\alpha \neq 1$, the constant $C'$ in Theorem \ref{teofull} depends on $L - \alpha$ instead, while the remaining constants depend on $p$ and $\alpha$.
        \item  The proof of the theorem can be divided into a deterministic statement, and the verification that the zeroes of the GEF satisfy its hypotheses with high probability. Our deterministic result, Proposition \ref{propseip}, relaxes Seip and Wallst\'en's sufficiency result, \cite[Theorem 1.1]{SeWa}. It applies to  sets that need not be sampling, and produces weighted inequalities whose quality depends on local separation and density properties of the set.
        From a probabilistic perspective, we rely on the perturbed lattice description of the zeroes of the GEF from \cite{SoTs2}. We also establish estimates on the separation between zeroes of the GEF that may be of independent interest (see Sections \ref{sectec} and \ref{secse}).
    \end{enumerate}
 \end{rem}

As an immediate consequence of Theorem \ref{teofull}, we recover Lyons and Zhai's uniqueness result from \cite[Theorem 1.1]{LyZh} in the case of the GEF and for $L> 1$ (cf. \cite{ChLyPa,BuQiSh}).

\begin{corollary}
    \label{corolz}
    Let $ 1< L <\infty$, $1\le p <\infty$ and consider $\zc^{(L)}$, the zero set of $F_L$. Almost surely, the only function $f\in \mathcal{F}^p$ such that $f(\zc^{(L)})=\{0\}$ is $f=0$.
\end{corollary}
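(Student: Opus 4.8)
The plan is to obtain Corollary~\ref{corolz} as a direct consequence of Theorem~\ref{teofull}, letting the radius $R$ grow along a sequence so that the exceptional probabilities vanish.

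First, fix $L$ and $p$, and let $c,C,C'$ be the constants provided by Theorem~\ref{teofull}. For each integer $R\ge C$ denote by $E_R$ the exceptional event, of probability at most $e^{-c\logg R\logg\logg R}$, outside of which the inequality \eqref{eqfull} holds for every $f\in\mathcal{F}^p$. All the $E_R$ are measurable with respect to the common probability space carrying the Gaussian coefficients $(\zeta_n)$ that define $F_L$. Since $\logg R\logg\logg R\to\infty$ as $R\to\infty$, we have $\mathbb{P}(E_R)\to 0$, hence
\[
\mathbb{P}\Big(\bigcap_{\substack{R\in\nn\\ R\ge C}} E_R\Big)\le \inf_{R\ge C}\mathbb{P}(E_R)=0.
\]
(Equivalently, $\sum_{R} \mathbb{P}(E_R)<\infty$ and one may invoke Borel--Cantelli.) Therefore, almost surely there is some integer $R_0\ge C$ for which \eqref{eqfull} holds simultaneously for all $f\in\mathcal{F}^p$.

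Next, I would argue on this almost-sure event. If $f\in\mathcal{F}^p$ satisfies $f(\zc^{(L)})=\{0\}$, then every term of the sum on the right-hand side of \eqref{eqfull} (with $R=R_0$) vanishes, so $\|f\|_p^p\le 0$ and hence $f=0$, which is the claim.

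There is essentially no obstacle here once Theorem~\ref{teofull} is in hand; the only points meriting a moment's attention are (a) that for a fixed $R$ the estimate \eqref{eqfull} is a single inequality valid for all of $\mathcal{F}^p$ at once, so selecting one good radius $R_0$ already rules out every nonzero $f$ vanishing on $\zc^{(L)}$, and (b) that the countable intersection above is legitimate because all the events $E_R$ are defined on the same probability space. The real content lies entirely in Theorem~\ref{teofull}.
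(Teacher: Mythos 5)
Your proposal is correct and takes exactly the route the paper intends: the paper simply asserts Corollary~\ref{corolz} is "an immediate consequence of Theorem~\ref{teofull}" and gives no further argument, and your careful filling-in (choosing, almost surely, a good radius $R_0$ via $\mathbb{P}(\bigcap_R E_R)\le\inf_R \mathbb{P}(E_R)=0$, then observing that a function vanishing on $\zc^{(L)}$ makes the right-hand side of \eqref{eqfull} identically zero) is precisely the standard reading of that assertion.
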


It is worth noting that Lyons and Zhai's result also holds in the critical case $L= 1$. Additionally, it deals with arbitrary GAFs.

Next we turn our attention to sampling constants for polynomials. Specifically, for a family $\{\zc_d\}_{d\in \nn}$ of finite subsets of $\cc$, we study the asymptotic behaviour of the best  constants $A_d$ and $B_d$ such that for every polynomial $f$ of degree at most $d$,
\begin{align}
    \label{eqmz}
    A_d \|f\|_{p}^p\le \sum_{z\in\zc_d} |f(z)|^p e^{-p |z|^2/2} \le B_d \|f\|_{p}^p.
\end{align}
Here, by ``best constants'' we mean those whose ratio $B_d/A_d$ closest to $1$. Whenever $B_d/A_d$ is uniformly bounded, these inequalities are sometimes called Marcinkiewicz-Zygmund inequalities and have been studied in the context of Fock spaces in \cite{GrOC}. 

Given the Gaussian weight in the definition of the Fock norms, polynomials of unit norm and degree at most $d$ on the Fock space are mainly concentrated on a ball with centre $0$ and radius approximately $\sqrt{d}$. Consequently, we work with $\zc_d=\zc^{(L_d)}\cap B_{R_d}(0)$, for suitable choices of $L_d$ and $R_d$. Using the perturbed lattice description for the zeroes of the GEF from \cite{SoTs2} and the sampling results from \cite{SeWa} it is easy to obtain Marcinkiewicz-Zygmund inequalities, if we slightly oversample by using $O(d (\logg d)^{1/2+\varepsilon})$ points. Our result is the following.

\begin{proposition}\label{propos}
    Let $\varepsilon>0$, $d\in \nn$ and choose $L_d\ge (\logg d)^{1/2+\varepsilon}$ and $ R_d=\sqrt{2d}$. Define $\zc_d=\zc^{(L_d)}\cap B_{R_d}(0)$. Given $1\le p <\infty$ there exist positive constants $A=A(p),B=B(p),c=c(\varepsilon,p)$ and $C=C(\varepsilon,p)$ satisfying the following. Outside an event of probability at most $C e^{-c(\logg d)^{(1+\varepsilon/2)}}$, the estimate \eqref{eqmz} holds with $B_d/A_d$ uniformly bounded. In particular, for $\delta>0$ there exists $d_0=d_0(\delta,\varepsilon,p)\in \nn$ such that outside an event of probability $\delta$ these inequalities hold simultaneously for every $d\ge d_0$. 
\end{proposition}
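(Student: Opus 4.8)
The plan is to split the statement into a deterministic sampling estimate for configurations that are locally nice, and a verification that the zeroes of $F_{L_d}$ are locally nice with the stated probability. The guiding observation is that we are oversampling by a factor of order $L_d$: the sum in \eqref{eqmz} behaves like a Riemann sum with $\asymp L_d$ points per unit area, so \emph{both} $A_d$ and $B_d$ are forced to be of order $L_d$ and it is only their ratio that will remain bounded. Throughout, recall that a polynomial $f$ of degree at most $d$ has all but an $e^{-c_pd}$ fraction of its Fock norm supported on $B_{R_d}(0)=B_{\sqrt{2d}}(0)$ (an elementary estimate via the rotation invariance of $\|\cdot\|_p$ and Stirling; cf. the discussion preceding \eqref{eqmz}); this lets us pass freely between finite configurations in $B_{R_d}(0)$ and genuine sampling sets for $\mathcal{F}^p$.

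For the lower bound, fix a small absolute constant $\delta>0$ and a lattice $\Gamma$ of spacing $3\delta$, so that $\Gamma$ has density strictly above $1/\pi$. Let $\mathcal{E}_1$ be the event that every disc $B_\delta(\gamma)$ with $\gamma\in\Gamma\cap B_{R_d-\delta}(0)$ contains at least $m_d:=\lceil\tfrac12 L_d\delta^2\rceil$ zeroes of $F_{L_d}$. On $\mathcal{E}_1$, choose in each such disc $m_d$ distinct zeroes $z_{\gamma,1},\dots,z_{\gamma,m_d}$ and set $\zc_d^{(k)}=\{z_{\gamma,k}\}_\gamma$ for $k=1,\dots,m_d$. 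Since $\Gamma$ has spacing $3\delta>2\delta$, these are pairwise disjoint subsets of $\zc_d$, and each $\zc_d^{(k)}$ is a perturbation of $\Gamma\cap B_{R_d-\delta}(0)$ with displacements $<\delta$; completing it by the unperturbed part of $\Gamma$ outside $B_{R_d-\delta}(0)$ yields a bounded perturbation of $\Gamma$, which by Seip and Wallst\'en's sufficiency theorem \cite[Theorem 1.1]{SeWa} (cf. \cite[Lemma 4.31]{Zhu}) is a sampling set for $\mathcal{F}^p$ with constants depending only on $\delta$ and $p$. Applying this to a polynomial $f$ of degree at most $d$ and discarding, via the concentration fact above, the negligible contribution of the points outside $B_{R_d-\delta}(0)$, one gets $\sum_{z\in\zc_d^{(k)}}|f(z)|^pe^{-p|z|^2/2}\ge A_0\|f\|_p^p$ for all $d\ge d_0(p)$, with $A_0=A_0(p)>0$. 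Summing over the $m_d$ disjoint sets gives the left inequality of \eqref{eqmz} with $A_d=m_dA_0\asymp L_d$.

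For the right inequality we use the Bessel-type bound of Proposition~\ref{propbe}: subharmonicity of $|f|^p$ gives $|f(z)|^pe^{-p|z|^2/2}\lesssim\int_{B_{\rho_0}(z)}|f(w)|^pe^{-p|w|^2/2}\,dA(w)$ for a fixed $\rho_0$, so on the event $\mathcal{E}_2$ that every disc of radius $\rho_0$ meeting $B_{R_d}(0)$ carries at most $CL_d$ zeroes one obtains $\sum_{z\in\zc_d}|f(z)|^pe^{-p|z|^2/2}\le B_d\|f\|_p^p$ with $B_d\asymp L_d$; hence on $\mathcal{E}_1\cap\mathcal{E}_2$ the estimate \eqref{eqmz} holds with $B_d/A_d$ bounded uniformly in $d$. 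To bound the probability of the complement: each of $\mathcal{E}_1,\mathcal{E}_2$ is an intersection over $\lesssim R_d^2\asymp d$ discs of fixed radius, and by the strong hole and overcrowding estimates for the GEF -- the rate being quartic in the radius, i.e. quadratic in the mean count, which is $\asymp L_d$ here (see \cite{NaSoVo}, and the perturbed lattice description of \cite{SoTs2}) -- each disc violates its count bound with probability at most $e^{-cL_d^2}$. Since $L_d\ge(\logg d)^{1/2+\varepsilon}$ gives $L_d^2\ge(\logg d)^{1+2\varepsilon}$, a union bound shows that $\mathcal{E}_1\cap\mathcal{E}_2$ fails with probability at most $d\,e^{-cL_d^2}\le Ce^{-c'(\logg d)^{1+\varepsilon/2}}$. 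For the final assertion, since $1+\varepsilon/2>1$ the series $\sum_d e^{-c'(\logg d)^{1+\varepsilon/2}}$ converges, so for $d_0$ large enough its tail is $<\delta$ and a union bound over $d\ge d_0$ finishes the proof.

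The delicate points are two. First, one genuinely needs the lower constant $A_d$ to be of order $L_d$: a single bounded perturbation of a sublattice of density just above $1/\pi$ -- the naive output of \cite{SoTs2} -- gives only $A_d\asymp1$, useless against $B_d\asymp L_d$, so one must exploit the $\asymp L_d$-fold multiplicity of zeroes in each fixed-size disc and split into $\asymp L_d$ disjoint near-copies of $\Gamma$. Second, the local count estimates must decay like $e^{-cL_d^2}$ rather than merely $e^{-cL_d}$ for the union bound over $\asymp d$ discs to survive with the stated probability; this is precisely where the unusually strong (quartic-rate) hole and overcrowding estimates special to the GEF are used, and where the restriction $L_d\ge(\logg d)^{1/2+\varepsilon}$ with $\varepsilon>0$ enters.
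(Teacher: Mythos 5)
Your proof is correct, and it achieves the key decomposition (splitting $\zc_d$ into $\asymp L_d$ disjoint near-lattice configurations, applying Seip--Wallst\'en to each, and summing) by a mechanism that differs from the paper's. The paper leans on the structural perturbed-lattice description from \cite{SoTs2}: every zero $z_{mn}$ comes with an associated lattice point $\lambda_{mn}\in\Lambda=\sqrt{\pi/L_d}\zz^2$, and the argument partitions $\Lambda$ into $\lceil\sqrt{L_d/2}\rceil^2$ sublattices $\Lambda_j$ of fixed density $\approx 2/\pi$, forming $\zc_j$ from the zeroes attached to $\Lambda_j$ and controlling the perturbation via the tail bound \eqref{eqdev}. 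You instead fix a coarse lattice $\Gamma$ of spacing $3\delta$ and use concentration of local zero counts (the rate $e^{-cL_d^2}$ for under/overfilling a fixed disc, as in \cite[Theorem 2]{SoTs3}) to guarantee that each disc $B_\delta(\gamma)$ contains $\gtrsim L_d$ zeroes, then distribute them into $m_d\asymp L_d$ disjoint $\delta$-perturbations of $\Gamma$. The two routes have the same probabilistic strength (exponent $\asymp L_d^2\gtrsim(\logg d)^{1+2\varepsilon}$, beating the $\asymp d$ union bound), and both discard the boundary contribution by the tail estimate of Lemma~\ref{lemtai}. Your version is more elementary in that it does not invoke the full perturbed-lattice theorem (only local count concentration), and it would transfer directly to any repulsive point process with quartic-rate count concentration; the paper's version has the advantage of exercising exactly the machinery (\cite{SoTs2} and its tail bound) that is indispensable for the harder Theorem~\ref{teofull}, and sidesteps the issue of choosing $m_d$ zeroes per disc by using the canonical one-to-one zero-to-lattice-point assignment. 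One small cosmetic point: for the upper bound you cite Proposition~\ref{propbe}, but that proposition as stated concerns $1<L<2$ and yields $\sqrt{\logg d}$ rather than $L_d$; what you actually use, and what suffices, is the underlying subharmonicity argument \eqref{eqloc} together with the bound $\#\zc^{(L_d)}\cap B_{\rho_0}(z)\lesssim L_d$ on the event $\mathcal{E}_2$, which is exactly what the paper does via \cite[Theorem 2]{SoTs3}.
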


\begin{rem} A few comments are in order:
\begin{enumerate}[label=(\roman*)]
    \item If we choose $L_d=O((\logg d)^{1/2+\varepsilon})$ then we are oversampling by less than a logarithmic factor since $\#\zc_d= O(d (\logg d)^{1/2+\varepsilon})$ outside a set of probability $O(e^{-c d^2 (\logg d)^{1+2\varepsilon} })$ by \cite[Theorem 2]{SoTs3} (cf. \cite[Proposition 2]{BaGr3} and see Remark \ref{remrel}).
    \item The choice of $R_d$ is somewhat arbitrary and only serves to simplify the proof. Since we are already oversampling (due to the condition $L_d\ge (\logg d)^{1/2+\varepsilon}$), there is no appreciable gain in the sample size when trying to set $ R_d^2$ as small as possible. Nevertheless, the proof works for $R_d^2\ge d + C \sqrt{d\logg d}$.
    \item It is possible to replace the sum over $\zc_d$ by the sum over all of $\zc^{(L_d)}$ strengthening the second inequality from \eqref{eqmz}. This is because polynomials of unit norm and degree at most $d$ are essentially concentrated in $B_{\sqrt{d}}(0)$. The argument to show this is analogous to the proof of Proposition \ref{propbe}.
    \item For $p=2$ one can choose the ratio $B_d/A_d$ arbitrarily close to 1, provided that $d_0$ is large enough (see Remark \ref{remone}).
\end{enumerate}
\end{rem}

The situation is much more delicate when one samples with $O(d)$ points. In this case, the ratio $B_d/A_d$ in \eqref{eqmz} is not uniformly bounded, but grows quite slowly. As a consequence of Theorem \ref{teofull}, we have the following result.

\begin{theorem}\label{teosam}
 Let $ 1\le p <\infty$ and $d\in \nn$. There exist $c,C>0$ depending on $p$ satisfying the following. Outside an event of probability at most $C e^{-c\logg d\logg\logg d}$ we have
 \begin{multline*}
      e^{-C\sqrt{\logg d}\logg^6 \logg d}\|f\|_{p}^p \le \sum_{z\in \zc_d}|f(z)|^pe^{-p|z|^2/2}
     \\ \le  \sum_{z\in \zc^{(L_d)}}|f(z)|^pe^{-p|z|^2/2} \le C \sqrt{\logg d}\|f\|_{p}^p,
 \end{multline*}
for every polynomial $f$ of degree $\deg f \le d$, where $\zc_d=\zc^{(L_d)}\cap B_{R_d}(0)$, $1 +\tfrac{C}{\logg\logg d}\le L_d<2$ and $R_d^2=d+C\sqrt{d \logg d}$. In particular, for $\delta>0$ there exists $d_0=d_0(\delta,p)\in \nn$ such that outside an event of probability $\delta$ these inequalities hold simultaneously for every $d\ge d_0$. 
\end{theorem}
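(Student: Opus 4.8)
The plan is to deduce the first (lower) inequality from Theorem~\ref{teofull} by truncating the sum to $B_{R_d}(0)$, to obtain the last (upper) inequality from subharmonicity of $|f|^p$ together with an overcrowding bound for the zeros of $F_{L_d}$ on $B_{\sqrt d}(0)$, and then to intersect the (super-polynomially likely) events on which these steps work; the middle inequality is trivial since $\zc_d\subseteq\zc^{(L_d)}$ and all summands are nonnegative.

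For the lower bound I would apply Theorem~\ref{teofull} with $L=L_d$ and $R=R_d$. Since $L_d<2$ the constants there depend only on $p$, the hypothesis $R_d\ge C$ holds for $d$ large, and as $R_d\asymp\sqrt d$ the exceptional probability $e^{-c\logg R_d\logg\logg R_d}$ is $\le e^{-c'\logg d\logg\logg d}$. On the good event, $\|f\|_p^p\le C'\sum_{z\in\zc^{(L_d)}}\omega(|z|+R_d)|f(z)|^pe^{-p|z|^2/2}$ for every $f\in\mathcal F^p$. Restricting to $\deg f\le d$ I would split this sum at $|z|=R_d$. On $\zc_d$ one has $\omega(|z|+R_d)\le\omega(2R_d)\le e^{C\sqrt{\logg d}\logg^6\logg d}$, using $R_d^2\le 2d$, so that $\logg(2R_d)\le\logg d$ and hence $\logg\logg(2R_d)\le\logg\logg d$; moreover the hypothesis $L_d-1\ge C/\logg\logg d$ with $C$ large (depending only on $p$, namely $C\ge 2C''$) forces $\logg\logg C'=C''/(L_d-1)\le\tfrac12\logg\logg d$, so $C'\le e^{\sqrt{\logg d}}$, which is absorbed into the weight. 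For the tail $|z|>R_d$ I would use the concentration of degree-$\le d$ polynomials on $B_{\sqrt d}(0)$: the pointwise estimate $|f(z)|^pe^{-p|z|^2/2}\lesssim_p\|f\|_p^p\,e^{-c_p(|z|^2-d)^2/|z|^2}$ for $|z|^2\ge d$ (for $p=2$ this is $K_d(z,z)e^{-|z|^2}=\mathbb P(\pois(|z|^2)\le d)$ combined with a Chernoff bound; the general case follows by the sub-mean value inequality, cf.\ Proposition~\ref{propbe}), together with the coarse count $\#(\zc^{(L_d)}\cap B_r(0))\le Cr^2$ for all integers $r\ge\sqrt d$, which by \cite{NaSoVo} fails with probability $\le e^{-cd^2}$. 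Taking $C$ in $R_d^2=d+C\sqrt{d\logg d}$ large enough (depending on $p$), the annular sum of the tail is $\le\tfrac12\|f\|_p^p$ and is absorbed on the left, giving $\|f\|_p^p\le e^{C_1\sqrt{\logg d}\logg^6\logg d}\sum_{z\in\zc_d}|f(z)|^pe^{-p|z|^2/2}$.

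For the upper bound, the sub-mean value inequality applied to the entire function $w\mapsto f(w)e^{-\bar z w+|z|^2/2}$ gives $|f(z)|^pe^{-p|z|^2/2}\le C_p\int_{B_1(z)}|f(w)|^pe^{-p|w|^2/2}\,dA(w)$, whence $\sum_{z\in\zc^{(L_d)}}|f(z)|^pe^{-p|z|^2/2}\le C_p\int_{\cc}\#\big(\zc^{(L_d)}\cap B_1(w)\big)|f(w)|^pe^{-p|w|^2/2}\,dA(w)$. I would bound the combinatorial factor by $C_2\sqrt{\logg d}$ uniformly on $B_{\sqrt{2d}}(0)$: covering that ball by $O(d)$ balls of radius $\tfrac12$ and invoking the overcrowding estimate $\mathbb P\big(\#(\zc^{(L)}\cap B_r(z))\ge m\big)\le e^{-cm^2\log m}$ for $m$ large (see \cite{NaSoVo} and Section~\ref{secse}), a union bound with $m=C_2\sqrt{\logg d}$ makes this fail with probability $O\big(d\,e^{-cC_2^2\logg d\,\logg\logg d}\big)\le e^{-c'\logg d\logg\logg d}$ once $C_2=C_2(p)$ is large. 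On $|w|>\sqrt{2d}$ the polynomial decay gives $|f(w)|^pe^{-p|w|^2/2}\le\|f\|_p^p\,e^{-c_pd}$, so that region contributes $\ll\|f\|_p^p$ on the coarse-count event already used. Since $\int_{\cc}|f|^pe^{-p|w|^2/2}\,dA=\tfrac{2\pi}{p}\|f\|_p^p$, the last inequality follows with constant $C\sqrt{\logg d}$.

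Intersecting the boundedly many exceptional events, each of probability $\le e^{-c\logg d\logg\logg d}$, and relabelling constants (all depending only on $p$) then yields the displayed chain on an event of probability $\ge 1-Ce^{-c\logg d\logg\logg d}$; the final assertion is Borel--Cantelli bookkeeping, since $\sum_d e^{-c\logg d\logg\logg d}<\infty$ lets one pick $d_0$ with $C\sum_{d\ge d_0}e^{-c\logg d\logg\logg d}\le\delta$. The hard part is the upper bound: it rests on the \emph{sharp} overcrowding tail $e^{-cm^2\log m}$ for the number of zeros of $F_{L_d}$ in a ball of bounded radius, since it is exactly the balance of this tail against the $\asymp d$ balls needed to cover $B_{\sqrt d}(0)$ that produces the rate $\sqrt{\logg d}$ (a tail $e^{-cm^2}$ would give only $\sqrt{\logg d\,\logg\logg d}$, and a Poisson-type tail $e^{-cm\log m}$ would give $\logg d$, which is false for the GEF). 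A secondary point is tracking the double-exponential constant $C'$ from Theorem~\ref{teofull}, which is precisely why the hypothesis imposes $L_d-1\gtrsim 1/\logg\logg d$.
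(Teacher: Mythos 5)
Your proposal is correct and follows essentially the same route as the paper. The paper proves the upper bound as Proposition~\ref{propbe} (Bessel inequality) via subharmonicity \eqref{eqloc}, the overcrowding estimate from \cite[Theorem 1]{Kr} (which gives exactly the $e^{-cm^2\log m}$ tail you invoke, whereas \cite{NaSoVo} and Section~\ref{secse} are not the sharpest references for that bound), and the tail estimate Lemma~\ref{lemtai}; it proves the lower bound by applying Theorem~\ref{teofull} with $R=R_d$, absorbing $C'$ and $\omega(2R_d)$ into $e^{C\sqrt{\logg d}\logg^6\logg d}$ exactly via the hypothesis $L_d-1\gtrsim 1/\logg\logg d$, and controlling the sum over $|z|>R_d$ with the local point-count bound from Lemma~\ref{lemprob} together with Lemma~\ref{lemtai}, just as you sketch with the pointwise concentration estimate and a coarse annular count. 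Your diagnostic remarks about why the $\sqrt{\logg d}$ rate and the $\logg\logg$-threshold for $L_d-1$ emerge are exactly the mechanism the paper uses.
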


Similar to before, we have a good control of the number of points we are using for sampling. Notice that if we take $L_d=L< 1+\varepsilon$ fixed, then by \cite[Theorem 1]{NaSoVo} we have $\#\zc_d\le (1+\varepsilon)d+o(d)$ outside a set of probability $O(e^{-c_\varepsilon d^{2-\varepsilon}})$. In other words, we are essentially oversampling by a factor of $1+\varepsilon$. Moreover, choosing $L_d= 1 +\tfrac{C}{\logg \logg d}$, \cite[Theorem 1]{NaSoVo} ensures that $\#\zc_d\le d+o(d)$ outside a set of probability $O(e^{-c_\varepsilon d^{2-\varepsilon}})$.

Let us also mention that the sampling constants are sharp up to $\log \log$ factors accompanying $\sqrt{\logg d}$. These quantities are related to hole and overcrowding probabilities for $\zc_d$ (see  Remark \ref{remopt}).

Finally, let us briefly discuss interpolation, which can be thought of as the ``dual concept'' to sampling. A discrete set $\{z_j\}_j=\zc\subseteq\cc$ is called an interpolation set for $\mathcal{F}^p$ if there is a constant $N>0$ such that for every sequence $(a_j)_j\in\ell_p$ there exists $f\in \mathcal{F}^p$ with $f(z_j)e^{-|z_j|^2}=a_j$ and $\|f\|_{p}\le N \|(a_j)_j\|_p$.
From \cite[Lemma 6.1]{Se}, it follows that there is a constant $C>0$ such that for any $z,z'\in\zc$,
\begin{align}
    \label{interpol}
    1\le C N |z-z'|.
\end{align}
In other words, the interpolation constant can be bounded from below in terms of the minimum separation between the points in $\zc$. As before, although a global interpolation inequality might be too restrictive for random point processes in general, one could ask if a local interpolation inequality for polynomials of degree at most $d$ holds with high probability and with interpolation constants that grow slowly with $d$. However, this is not the case in general. For example consider $\zc^{(L)}$, the zero set of $F_L$. If we restrict our analysis to $\zc^{(L)} \cap B_{R}(0)$ one expects to find many pairs of points at a distance at most $C R^{-1/2}$ from each other (see \cite[Theorem 2]{feng2024smallest}). Here one should think of $R \approx \sqrt{d}$ so that there are about $d$ points in $\zc^{(L)} \cap B_{R}(0)$. From \eqref{interpol}, we see that $N\gtrsim d^{1/4}$. In other words, if we only wish to interpolate at the points in $\zc^{(L)} \cap B_{R}(0)$, and even if we can choose any function $f\in \mathcal{F}^p$ to interpolate (rather than just polynomials of bounded degree), we get a ``bad'' constant that grows at least like $R^{1/2}$.

\subsection{Background and related work}\label{secba}
The origins of random sampling can be traced back to the study of systems of exponentials whose frequency profile is a perturbation of a deterministic set (see, e.g., \cite{SeUl,ChLy,ChLyPa}). Beyond serving as a probabilistic approach to classical questions in frame theory, random sampling has proven useful in several areas. Examples include: sampling of sparse polynomials and the theory of compressed sensing (see, e.g., \cite{CaRoTa,CaTa,Donoho}); theoretical frameworks to study the performance of certain numerical algorithms in signal processing (see, e.g., \cite{BaGr,KuRa}); and sampling in higher dimensions and relevant sampling (see, e.g., \cite{BaGr2,BaGr3,FuXi,LiSuXi}). 

Our approach is based on reducing the problem to a deterministic one by showing that the zeroes of the GEF satisfy certain deterministic conditions with high probability.
This is in contrast to other more nuanced probabilistic approaches from random sampling that can be applied even in contexts where the deterministic theory is less developed (e.g. multivariate sampling). The challenge we address lies elsewhere, and arises from the interactions between nearby zeroes. The main motivation behind this is to understand the sampling properties of point processes exhibiting repulsion and therefore less clumping than Poisson processes and other constructions with built-in independence.

The most natural examples of point processes that do not artificially restrict clumping include determinantal processes, Coulomb gases, and the zeroes of GAFs. 
Heuristically, less clumping and more rigidity should improve the sampling properties of the process. Furthermore, there are contexts where the repulsive process is the object of study or part of the problem's setting and cannot be chosen freely. We mention, for example, the study of sampling for Coulomb gases (see \cite{AmRo}), and wireless sensor networks (see \cite{ZaCo}). We therefore view this work as a first step towards this general goal.  We chose to study the zeroes of the GEF, since they offer some technical advantages such as their description as a perturbed lattice in \cite{SoTs2}, which is a key tool in our approach. This is not available for other point processes, and different approaches may be needed. Alternatively, one could try to prove a perturbed lattice result for these processes and apply our techniques; see \cite[Section 4.2]{ElSpYa} for another motivation for proving such a result in the Ginibre case.

Global sampling is achievable for random sets that retain some structure over large distances. See, for example, \cite{AnCaRo} (cf. \cite{RaSi}) for sampling in shift-invariant spaces using unions of random translates of lattices. However, as mentioned before, random sampling on unbounded sets is not feasible whenever the correlation between points decays with their distance. In relevant sampling, functions are assumed to be concentrated on a compact set and sampled on a finite number of random points near this set. In our case, we argue similarly by restricting to polynomials of bounded degree. These two approaches are quite related, given that the monomials are the eigenvectors of the concentration operator on a disc centred at the origin with eigenvalues decreasing to zero as the degree of the monomials increases (see Section \ref{sec2}). Using this and Proposition~\ref{propos}, one could proceed as in \cite{BaGr3} to produce a relevant sampling result. Doing the same in the context of Theorem \ref{teosam} would require a stricter concentration assumption for the family of functions where relevant sampling is carried out, namely a decay condition for $\int_{B_R(0)^c}|f|^p e^{-p\alpha|z|^2/2}$ as $R\to \infty$, rather than assuming this magnitude to be small for a fixed $R$. 

In broad terms, random sampling inequalities involve a discretization of an integral in terms of a random discrete set. From a probabilistic perspective, and specifically in the study of random point processes, this is related to the study of fluctuations of linear statistics. This has been extensively studied for zeroes of GAFs (see, e.g., \cite{SoTs,NaSo,HKPV}). 
An important difference between  these results and ours is that they deal with a fixed function.
In contrast, in random sampling one estimates the probability that for every function in a family the integral can be discretized uniformly. 
Nevertheless, it should be possible to combine a result for a fixed function with a metric entropy argument as in \cite[Theorem 4.7]{BaGr} to produce a random sampling result. For example, one could use the quantitative Offord-type estimate \cite[Theorem 7.1.1]{HKPV} (see also \cite{Of,So}) for fluctuations of a compactly supported $\mathcal{C}^2$ function as a starting point. We did not pursue this strategy, however, partly because we expect it would yield a weaker result. We believe that it would lead to oversampling by a large factor, due to the metric entropy argument. This is the case for \cite{BaGr}, which was later improved by the same authors to the suspected optimal oversampling factor in \cite{BaGr3} by avoiding the metric entropy argument.

Finally, as mentioned before, our results can be seen as a quantitative version of Lyons and Zhai's uniqueness result from \cite[Theorem 1.1]{LyZh} for the GEF (cf. \cite{ChLyPa,BuQiSh}).

\subsection{Technical overview}\label{sectec}
Our results are based on reducing the problem to a deterministic one by showing that the zeroes of the GEF satisfy certain deterministic conditions with high probability. In other words, the work can be divided into two parts: finding a suitable deterministic statement, and proving that the zeroes of the GEF satisfy its hypotheses with high probability. 

In the case of Proposition \ref{propos}, the reduction to the deterministic case is a straightforward procedure since all of the necessary ingredients are already available to us. By \cite{SoTs2}, the zeroes of the GEF can be described as a perturbed lattice. This perturbation ``decreases'' as we increase the intensity of the GEF, since changing the intensity is equivalent to rescaling the GEF (see \eqref{eqgef}). By oversampling with the GEF of high intensity, we can therefore ensure a small enough degree of perturbation with respect to a lattice within a bounded set. In essence we are only dealing with the zeroes from a macroscopic point of view. This is enough to construct associated sampling sets and use Seip and Wallst\'en's theorem on sufficiency for sampling in the Fock space, \cite[Theorem 1.1]{SeWa}. 

For Theorems \ref{teofull} and \ref{teosam}, the situation is much more subtle. In this case, the intensity $L/\pi$ of the GEF can be close to the critical density $1/\pi$ for sampling sets on $\mathcal{F}^p$. Unlike before, here we have to deal with local interactions between the zeroes of the GEF and understanding the separation between points becomes important. We have to work both on the deterministic and on the probabilistic front.

Firstly, we provide a quantitative version of Seip and Wallst\'en's sampling theorem, Proposition \ref{propseip}. The main advantage is that it allows for milder assumptions and the strength of the result varies according to the characteristics of the discrete set involved. It can be applied to non-sampling sets $\zc\subseteq \cc$ for which it produces inequalities of the form
\[\|f\|_{p}^p\le C \sum_{z\in\zc} \omega(z) |f(z)|^p e^{-\frac{p}{2}|z|^2}, \quad f\in \mathcal{F}^p,\]
where $\omega$ is a weight depending on $\zc$. To be more specific, $\omega$ depends on how well separated the points in $\zc$ are and how far $\zc$ is from being a lattice. Crucially, Proposition \ref{propseip} allows for $\zc$ to have a small proportion of badly separated points. The overall strategy to prove Proposition \ref{propseip} is similar to Seip and Wallst\'en's theorem, but the argument is much more involved from a technical point of view. We mention that the approach of \cite{BeOC} may offer another way of producing similar estimates.

Secondly, we use \cite{SoTs2} again to describe the zeroes of the GEF as a perturbed lattice. In order to apply Proposition \ref{propseip} we need to control the separation between the zeroes of the GEF. We refer to \cite{feng2024smallest} for a deep structural result describing distances between zeroes. For our purposes, however, we need quantitative estimates. We provide bounds for the proportion of badly separated zeroes in a disc of arbitrary radius. For large radii, we use the almost independence of the GEF over large distances from \cite{NaSo} (see also \cite{NaSoVo2,NaSoVo}). For small radii, we have to deal with dependence. In Lemma \ref{lemrho}, we establish a quantitative version of the upper bound in \cite[Theorem 1.1]{NaSo3} to estimate the $k$-point intensity function of a GAF. This result (which may be of independent interest) allows us to control the number of badly separated points at small distances in Proposition \ref{coroloca}.

\subsection{Organization}
Section \ref{sec2} includes some basic facts about Fock spaces, Gaussian analytic functions and some auxiliary formulas for determinants associated to covariance matrices. In Section \ref{sec3} we prove Proposition \ref{propos}. In Section \ref{secsa} we prove Proposition \ref{propseip}, a quantitative version of Seip and Wallst\'en's sampling theorem \cite[Theorem 1.1]{SeWa}. Section \ref{secse} deals with separation between zeroes of the GEF, including a quantitative version of the upper bound in \cite[Theorem 1.1]{NaSo3} to estimate the $k$-point intensity function of a GAF, Lemma \ref{lemrho}. In Section \ref{secGEF} we prove Theorems~\ref{teofull} and \ref{teosam}. Finally, in Appendix \ref{applem} we prove Lemma \ref{lemlow}, a technical step in the proof of Proposition~\ref{propseip}.

\section{Preliminaries}
\label{sec2}
\subsection{Notation} We use $c,C$ to denote constants that only depend on the parameter $p$ and that may change from line to line. As mentioned before, we write $\logg x=\max(1,\log x)$. We denote $\nn_0=\nn\cup \{0\}$.

\subsection{Fock spaces} 
Beyond the Fock spaces defined in \eqref{eqf} we sometimes make use of the Fock space $\mathcal{F}^\infty$ of entire functions $f$ with finite  $\infty$-norm given by
\[\|f\|_{\infty}=\sup_{z\in\cc} |f(z)|e^{-|z|^2/2}.\]

Let us introduce some basic facts regarding Fock spaces. 
For $1\le p\le\infty$ and $a\in \cc$, define the Bargmann-Fock shift as the translation operator $\mathcal{T}_a:\mathcal{F}^p\to \mathcal{F}^p$ given by
\begin{align}
    \label{eqshi}
\mathcal{T}_{a}f(z) = e^{\overline{a}z - |a|^2/2}f(z-a),\quad f\in \mathcal{F}^p.
\end{align}
The  operator $\mathcal{T}_a$ is an isometric isomorphism (see \cite[Section 2.6]{Zhu}).
 
The map $p\mapsto \|\cdot\|_{p}$ is decreasing in $p$ (see \cite[Corollary 2.8]{Zhu}). Moreover, given $f$ holomorphic and $p>0$, by the subharmonicity of $|f(z)|^p$  and translation invariance, there is a constant $C>0$ depending on $p$ such that 
\begin{align}
\label{eqloc}
    |f(z)|^p e^{-p |z|^2/2}\le C \int_{B_1(z)} |f(w)|^p e^{-p|w|^2/2} dw.
\end{align}

The normalized monomials 
\[e_n(z)=\frac{z^n}{\sqrt{n!}},\]
form an orthonormal basis of $\mathcal{F}^2$. They are the images of the Hermite functions under the Bargmann transform (see \cite[Proposition 2.1, Theorem 6.8]{Zhu} and \cite[Section 3]{Gro} for further details). They are the eigenfunctions of the concentration operators given by 
\[T_R=Q\chi_{B_R(0)}Q,\]
where $Q:L^2(\cc,e^{- |z|^2}dA(z))\to \mathcal{F}^2$ is the orthogonal projection onto $\mathcal{F}^2$ Moreover, their corresponding eigenvalues are given by
\begin{align}
    \label{eqei}
    \lambda_n(R)=\frac{1}{n!}\int_0^{ R^2}x^ne^{-x}dx=1-e^{- 
 R^2}\sum_{k=0}^n \frac{R^{2k}}{k!}.
\end{align}
In particular, notice that 
\[\lambda_n(R)=P(X_R>n),\]
where $X_R\sim \pois( R^2)$. We will use the following Chernoff bound (see for example \cite[Section 2.2]{BoLuMa}):
\begin{align}
    \label{eqche}
    (1-\lambda_d(R)) =P(X_R\le d) \le e^{-( R^2-d-d\log(R^2/d))}.
\end{align}
Note that $R^2-d-d\log(\tfrac{R^2}{d})$ is positive and increasing in $R$ if $R^2>d$.

The following lemma allows us to estimate the norms of polynomials outside a centred disc (cf. \cite[Lemma 2.2]{GrOC}).
\begin{lemma}[Tail estimates]
\label{lemtai}
    For $d\in\nn$ let $f$ be a polynomial of degree at most $d$ and consider $A=B_r(0)^c$ where $d< r^2$. Then, for every $1\le p<\infty$ there is a constant $C>0$ depending on $p$ such that
    \begin{align*}
         \int_A |f(z)|^p e^{-p |z|^2/2} dA(z) \le C   r^{2}e^{-( r^2-d-d\log( r^2/d))/2}  \|f\|_{p}^p.
    \end{align*}
\end{lemma}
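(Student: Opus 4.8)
The plan is to reduce everything to the case $p=2$, where the monomials $e_n$ are orthonormal and the eigenvalues $\lambda_n(R)$ of the concentration operator control exactly the mass of a polynomial outside a centred disc. First I would handle $p=2$: write $f = \sum_{n=0}^d a_n e_n$, so that $\|f\|_2^2 = \sum_n |a_n|^2$, and observe that
\[
\int_{B_r(0)^c} |f(z)|^2 e^{-|z|^2} dA(z) = \sum_{n=0}^d |a_n|^2 \bigl(1-\lambda_n(r)\bigr),
\]
using that distinct monomials remain orthogonal over a centred disc (the angular integral kills cross terms) and that $\lambda_n(r) = \frac{1}{n!}\int_0^{r^2} x^n e^{-x}\,dx$ from \eqref{eqei}. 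Since $n\mapsto \lambda_n(r)$ is decreasing (equivalently $1-\lambda_n(r) = P(X_r \le n)$ is increasing in $n$), the worst case is $n=d$, giving
\[
\int_{B_r(0)^c} |f(z)|^2 e^{-|z|^2} dA(z) \le \bigl(1-\lambda_d(r)\bigr)\,\|f\|_2^2 \le e^{-(r^2-d-d\log(r^2/d))}\,\|f\|_2^2
\]
by the Chernoff bound \eqref{eqche}. This already gives the statement for $p=2$ (even without the polynomial prefactor $r^2$).

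Next I would pass to general $p$. The idea is to use the pointwise submean-value estimate \eqref{eqloc} together with the $p=2$ bound applied on a slightly enlarged region. Concretely, for $z$ with $|z| \ge r$, by \eqref{eqloc},
\[
|f(z)|^p e^{-p|z|^2/2} \le C \int_{B_1(z)} |f(w)|^p e^{-p|w|^2/2}\, dA(w),
\]
and integrating over $A = B_r(0)^c$ and using Fubini (each $w$ is covered boundedly often) bounds $\int_A |f(z)|^p e^{-p|z|^2/2}\,dA(z)$ by $C\int_{B_{r-1}(0)^c} |f(w)|^p e^{-p|w|^2/2}\,dA(w)$; so it suffices to prove the estimate with $A = B_{r'}(0)^c$ for $r' = r-1$ at the cost of adjusting constants, and I may as well keep writing $r$. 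Now for the $L^p$ tail I would interpolate: split $\int_{B_r(0)^c}|f|^p e^{-p|z|^2/2}$ and use Hölder, or more simply bound
\[
\int_{B_r(0)^c} |f(z)|^p e^{-p|z|^2/2} dA(z) \le \Bigl(\sup_{|z|\ge r} |f(z)|^{p-2} e^{-(p-2)|z|^2/2}\Bigr) \int_{B_r(0)^c} |f(z)|^2 e^{-|z|^2}\,dA(z)
\]
when $p\ge 2$, controlling the supremum by $\|f\|_\infty^{p-2}$, and in turn $\|f\|_\infty \lesssim \|f\|_2$ on the finite-dimensional space of polynomials of degree $\le d$ — but this introduces a dimensional constant depending on $d$, which is not allowed.

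So the cleaner route, and the one I would actually carry out, is to avoid $L^\infty$ altogether and instead reduce $L^p$ directly to $L^2$ by a scaling/dilation trick: for a polynomial $f$ of degree $\le d$ and parameter $p$, the function $g(z) = f(z)$ still has degree $\le d$, and one relates $\int |f|^p e^{-p|z|^2/2}$ to a weighted $L^2$ integral of $f^{p/2}$ — but $f^{p/2}$ is not a polynomial. Given the cross-reference to \cite[Lemma 2.2]{GrOC}, I expect the intended argument is: reduce to $p=2$ via \eqref{eqloc} as above to localize, then on each unit ball use the local $L^p$–$L^2$ comparison for holomorphic functions (again submean value plus reverse Hölder on discs, with constants depending only on $p$), chaining to get $\int_{B_r(0)^c}|f|^p e^{-p|z|^2/2} \lesssim_p \bigl(\int_{B_{r-C}(0)^c}|f|^2 e^{-|z|^2}\bigr)^{p/2} \cdot (\text{something})$ — and here the polynomial factor $r^2$ absorbs the mismatch in normalization. \textbf{The main obstacle} is exactly this last step: obtaining the $L^p$ bound with a constant depending only on $p$ (not on $d$), and with only a polynomial loss $r^2$ rather than an exponential loss, when passing from the clean $L^2$ identity to general $p$. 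I would resolve it by: (i) using \eqref{eqloc} to replace the $L^p$ tail integral by the $L^p$ norm of $f$ over $B_{r-1}(0)^c$ against the Gaussian, then (ii) noting $\|f\|_p \le \|f\|_2$ globally (monotonicity of Fock norms, \cite[Corollary 2.8]{Zhu}) to control the "bulk", and (iii) for the tail, covering $B_{r-1}(0)^c$ by unit balls centred on a lattice, applying \eqref{eqloc} on each and the $p=2$ tail bound, with the number of relevant lattice points near the boundary circle of radius $\approx r$ contributing the $O(r^2)$ (in fact $O(r)$, but $r^2$ is a safe over-estimate) factor, and the decay of $1-\lambda_d(\cdot)$ along the lattice summable and dominated by its value at radius $r-C$, producing the claimed $C r^2 e^{-(r^2-d-d\log(r^2/d))/2}\|f\|_p^p$ after one final application of norm monotonicity to restore $\|f\|_p^p$ on the right-hand side.
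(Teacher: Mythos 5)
Your $p=2$ case is exactly the paper's argument: write $f=\sum_{n\le d}a_ne_n$, compute the tail as $\sum_n(1-\lambda_n(r))|a_n|^2$, use monotonicity in $n$ plus the Chernoff bound \eqref{eqche}. Good.

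For general $p$ you go astray, and in a way worth untangling. You dismiss the interpolation $\int_A|f|^pe^{-p|z|^2/2}\le\|f\|_\infty^{p-2}\int_A|f|^2e^{-|z|^2}$ on the grounds that $\|f\|_\infty\lesssim\|f\|_2$ requires a constant depending on $d$. But the paper does \emph{not} pass through $\|f\|_2$ there: it uses $\|f\|_\infty\le\|f\|_p$, which follows from the \emph{dimension-free} monotonicity of Fock norms (\cite[Corollary 2.8]{Zhu}, as recorded in Section~\ref{sec2}). That gives, for $p\ge 2$,
\begin{align*}
\int_A|f|^pe^{-p|z|^2/2}\,dA(z)\le\|f\|_p^{p-2}\,Ce^{-(r^2-d-d\log(r^2/d))}\|f\|_2^2.
\end{align*}
The remaining gap is exactly $\|f\|_2^2\lesssim\;?\;\|f\|_p^2$, and this is the step your plan never addresses in a way that closes. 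The paper's trick is to use the concentration operator with $r_0=\sqrt d$: since $\lambda_d(r_0)\to 1/2$, one has
\begin{align*}
\|f\|_2^2\le\lambda_d(r_0)^{-1}\int_{B_{r_0}(0)}|f|^2e^{-|z|^2}\,dA(z)\le\lambda_d(r_0)^{-1}|B_{r_0}(0)|^{1-2/p}\|f\|_p^2\le Cd^{1-2/p}\|f\|_p^2,
\end{align*}
and the polynomial factor $d^{1-2/p}\le r^2$ is precisely where the $r^2$ in the statement comes from. This is the ingredient your plan is missing.

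Your alternative (iii), tiling $B_{r-1}(0)^c$ by unit balls centred on a lattice and applying \eqref{eqloc} plus the $p=2$ bound on each, does not close for either range of $p$. For $p\ge 2$, raising the $L^2$ local masses to the power $p/2$ and summing gives $\bigl(\int_{B_{r-1}(0)^c}|f|^2e^{-|w|^2}\bigr)^{p/2}\lesssim e^{-(\dots)}\|f\|_2^p$, but monotonicity gives $\|f\|_2^p\ge\|f\|_p^p$ --- the wrong direction, so you cannot ``restore $\|f\|_p^p$'' by a ``final application of norm monotonicity'' as you claim; you are back to needing the concentration-operator comparison. For $p\le 2$ the subadditivity $\sum a_i^{p/2}\le(\sum a_i)^{p/2}$ you would need fails, since $p/2\le 1$. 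The paper handles $p\le 2$ differently: it decomposes $A$ into dyadic annuli $A_n=\{2^{n-1}r<|z|<2^nr\}$, applies H\"older on each annulus to get $|A_n|^{1-p/2}\bigl(\int_{A_n}|f|^2e^{-|z|^2}\bigr)^{p/2}$, uses the $p=2$ tail bound on $B_{2^{n-1}r}(0)^c$, invokes monotonicity in the direction that \emph{is} favourable here ($\|f\|_2\le\|f\|_p$ for $p\le 2$), and sums a rapidly decaying series in $n$. So the genuine gaps in your proposal are: (a) the missed dimension-free bound $\|f\|_\infty\le\|f\|_p$; (b) the missing comparison $\|f\|_2\lesssim d^{1/2-1/p}\|f\|_p$ via the concentration operator for $p\ge 2$; and (c) a workable decomposition (dyadic annuli plus H\"older) for $p\le 2$.
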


\begin{proof}
    We start with the case $p=2$. For $f=\sum_{n=0}^d a_n e_n$, 
    \begin{align*}
        \frac{ 1}{\pi}\int_A |f(z)|^2 e^{- |z|^2} dA(z) &= \langle (I-T_r) f,f\rangle = \sum_{n=0}^d (1-\lambda_n(r)) |a_n|^2
        \\ &\le (1-\lambda_d(r)) \|f\|_{2}^2 
 \le e^{-( r^2-d-d\log(r^2/d))}\|f\|_{2}^2,\notag
    \end{align*}
where in the last inequality we used \eqref{eqche}.

    Now assume that $p\le 2$ and for $n\in\nn$ define $A_n=\{2^{n-1}r<|z|<2^nr\}.$ By H\"older's inequality and the monotonicity of the $p$-norms we have
    \begin{align*}
        \int_{A_n} |f(z)|^p e^{-p |z|^2/2} dA(z) &\le  |A_n|^{1-p/2} \Big(\int_{A_n} |f(z)|^2 e^{-|z|^2} dA(z)\Big)^{p/2} 
        \\ &\le C (2^n r)^{2-p}  e^{-( 4^{n-1}r^2-d-d\log( 4^{n-1}r^2/d))p/2}\|f\|_{2}^p
        \\ &\le C r^{2} e^{-(r^2-d-d\log(r^2/d))/2}\|f\|_{p}^p 4^{n+d(n-1)p/2}e^{-(4^{n-1}-1)dp/2 }.
    \end{align*}
    Summing in $n$ settles the case $p\le 2$.
    
    It remains to check the case $p\ge 2$. Similar to before,
    \begin{align}\label{eq3}
        \int_A |f(z)|^p e^{-p |z|^2/2} dA(z) &\le  \|f\|_{\infty}^{p-2} \int_A |f(z)|^2 e^{-|z|^2} dA(z) 
        \\& \le \|f\|_{p}^{p-2}  C e^{-(r^2-d-d\log( r^2/d))}\|f\|_{2}^2. \notag
    \end{align}
    Now let $r_0:=\sqrt{d}< r$ and notice that
    \begin{align}\label{eq4}
        \|f\|_{2}^2&= \sum_{n=0}^d |a_n|^2 \le \lambda_d(r_0)^{-1} \sum_{n=0}^d \lambda_n(r_0)|a_n|^2=\lambda_d(r_0)^{-1} \int_{B_{r_0}(0)} |f(z)|^2 e^{-|z|^2} dA(z)
        \\ &\le \lambda_d(r_0)^{-1} |B_{r_0}(0)|^{1-2/p} \|f\|_{p}^2\le C\lambda_d(r_0)^{-1} d^{1-2/p} \|f\|_{p}^2. \notag
    \end{align}
    Finally, observe that $\lambda_d(r_0)=P(X_{r_0}>d)$ where $X_{r_0}\sim \pois(d)$, so $\lambda_d(r_0)\to 1/2$ as $d\to\infty$ by the CLT for the Poisson process. In particular, $\lambda_d(r_0)$ is bounded from below. Combining this with \eqref{eq3} and \eqref{eq4} finishes the proof.
\end{proof}

\subsection{Gaussian analytic functions}\label{subgaf} 
A Gaussian analytic function (GAF) is a random variable $F$ taking
values in the space of holomorphic functions on a region $U\subseteq \cc$, such that $(F(z_1),\ldots,F(z_n))$ has a mean zero complex Gaussian
distribution for every $n \ge 1$ and every $z_1,\ldots, z_n \in U$.
For our purpuses we restrict our analysis to GAFs of the form
\begin{align*}
    F(z)=\sum_{n=0}^\infty a_n \zeta_n z^n,
\end{align*}
where $\zeta_n$ are iid standard complex normal variables, $a_n\ge 0$ for every $n\in \nn$ and $\limsup_n a_n^{1/n}<\infty$ (this is to ensure a positive radius of convergence, see \cite[Lemma~2.2.3]{HKPV}). Notice that rescaling a GAF $F(z)$ by a factor $r>0$ to get $F(rz)$, coincides with replacing its coefficients $a_n$ with $r^n a_n$. We use this frequently to rescale results for the GEF of intensity $1/\pi$ to the GEF of intensity $L/\pi$. In addition to the GEF we will use auxiliary GAFs in our analysis such as the hyperbolic GAF $\widetilde F$ given by
\[\widetilde F(z)=\sum_{n=0}^\infty  \zeta_n z^n,\]
whose zeroes enjoy a determinantal structure (see \cite{PeVi}).

For a random point process $\pc$ and $k\in\nn$, the $k$-point intensity function $\rho_{k}$ (alternatively $k$-factorial moment density or $k$-point correlation function, see \cite[Section 4.3]{MR3236788} and also \cite[Chapter 3]{HKPV}) is determined by the formula
\[\ee\Big[
\sideset{}{^{\neq}}\sum_{z_1,\ldots,z_k\in\pc}
f(z_1,\ldots,z_k)\Big]=\int_{\cc^k}f(w_1,\ldots,w_k)\rho_{k}(w_1,\ldots,w_k) \, dA^k(w_1,\ldots, w_k),\]
where $f$ is any nonnegative measurable function and the summation runs over all ordered $k$-tuples of distinct points in $\pc$. In the particular case that $\pc=\zc_F$ is the zero set of a GAF $F$, the $k$-point function (which we denote $\rho_{k,F}$) can be computed (see \cite[Section 3.4]{HKPV}) as
\begin{align}\label{eqrho}
    \rho_{k,F}(z_1\ldots,z_k)=\frac{1}{\pi^{2k}\det (\Gamma_F)}\int_{\cc^k} |\eta_1\ldots\eta_k|^2 e^{-\tfrac{1}{2}\langle \Gamma_F^{-1}\eta', \eta'\rangle}dA^k(\eta_1\ldots \eta_k),
\end{align}
where $\eta'=(0,\ldots,0,\eta_1,\ldots,\eta_k)\in \cc^{2k}$ and $\Gamma_F$ denotes the covariance matrix of the random vector $(F(z_1),\ldots,F(z_n),F'(z_1),\ldots,F'(z_n))$ (provided that $\Gamma_F$ is indeed invertible). We write $\rho_F=\rho_{k,F}$ when there is no room for confusion.

\begin{definition}\label{def: s}
    For a discrete set $\zc\in\cc$ and $z\in\zc$ define the separation of $z$ as the distance to its closest neighbour and denote it by 
    \begin{align*}    
    s_\zc(z)&=d(z,\zc\smallsetminus\{z\}).
    \end{align*}
    If $\zc_F$ is the zero set of a function $F$ we also write $s_F=s_{\zc_F}$ as well as $s^{(L)}=s_{\zc^{(L)}}$ for the zero set of the GEF $F_L$.
\end{definition}

\begin{lemma}\label{lemloc}
    There exists an absolute constant $C>0$ such that for $\tau,\varsigma>0$,
    \[P\big(\,\exists z\in \zc^{(1)}\cap [0,\tau)^2: \ s^{(1)}(z)<\varsigma \big)\le C \tau^2\varsigma^4.\]
\end{lemma}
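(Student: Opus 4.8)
The plan is a first-moment (union-bound) argument driven by the near-diagonal behaviour of the two-point intensity function of the GEF zeroes.

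First I would replace the event by a counting random variable. Set
\[N=\#\bigl\{(z,w)\in\zc^{(1)}\times\zc^{(1)}:\ z\in[0,\tau)^2,\ 0<|z-w|<\varsigma\bigr\},\]
the number of \emph{ordered} pairs of distinct zeroes whose first coordinate lies in the square and which are at distance less than $\varsigma$. If some $z\in\zc^{(1)}\cap[0,\tau)^2$ has $s^{(1)}(z)<\varsigma$, then its nearest neighbour $w\in\zc^{(1)}\setminus\{z\}$ produces a pair $(z,w)$ contributing to $N$, so the event in the statement is contained in $\{N\ge 1\}$. By Markov's inequality it suffices to bound $\ee[N]$, and by the defining property of the two-point intensity function $\rho_2:=\rho_{2,F_1}$, applied to $f(z_1,z_2)=\one_{[0,\tau)^2}(z_1)\,\one_{\{|z_1-z_2|<\varsigma\}}$, one gets
\[\ee[N]=\int_{[0,\tau)^2}\int_{B_\varsigma(z)}\rho_2(z,w)\,dA(w)\,dA(z).\]

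The key point is the uniform bound $\rho_2(z,w)\le C\,|z-w|^2$ for all $z,w\in\cc$, with $C$ absolute. Since the law of the GEF zeroes is invariant under translations and rotations, $\rho_2(z,w)=\varrho(|z-w|)$ for a single continuous function $\varrho$ on $[0,\infty)$; as the GEF zeroes decorrelate at infinity, $\varrho(r)\to\rho_1^2=\pi^{-2}$ as $r\to\infty$, so $\varrho$ is bounded. The crucial input is the \emph{quadratic repulsion} of GAF zeroes, namely $\varrho(r)=O(r^2)$ as $r\to 0$. This can be read off from \eqref{eqrho} with $k=2$ — equivalently from the Kac--Rice identity $\rho_2(z,w)=\perm(\Sigma)/(\pi^2\det A)$, where $A$ is the covariance of $(F_1(z),F_1(w))$ and $\Sigma$ is the Schur complement giving the conditional covariance of $(F_1'(z),F_1'(w))$ — because as $w\to z$ one has $\det A\asymp|z-w|^2$ while $\perm(\Sigma)=O(|z-w|^4)$, all entries of $\Sigma$ being $O(|z-w|^2)$; alternatively one may quote Hannay's closed form for the pair correlation of the GEF. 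Combining the behaviour near $0$ with the global bound yields $\varrho(r)\le C r^2$ for all $r\ge 0$.

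It remains to integrate. For every $z$, passing to polar coordinates,
\[\int_{B_\varsigma(z)}\rho_2(z,w)\,dA(w)\le C\int_{B_\varsigma(0)}|u|^2\,dA(u)=2\pi C\int_0^\varsigma r^3\,dr=\frac{\pi C}{2}\,\varsigma^4,\]
so $\ee[N]\le\frac{\pi C}{2}\,\tau^2\varsigma^4$, which is the claimed estimate. The first-moment reduction and the final integral are routine; the only step needing genuine care is the uniform estimate $\rho_2(z,w)\le C|z-w|^2$, i.e.\ checking that the pair correlation of the GEF zeroes vanishes to second (not merely first) order on the diagonal and stays bounded, which is where one must use the explicit structure of $\rho_{2,F_1}$ rather than a soft argument.
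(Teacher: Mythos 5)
Your proof is correct and takes essentially the same route as the paper: a first-moment (Markov) reduction to the expected number of close pairs via the two-point intensity $\rho_{2,F_1}$, the uniform bound $\rho_{2,F_1}(z,w)\le C|z-w|^2$, and a polar-coordinate integration yielding $\tau^2\varsigma^4$. The only difference is presentational: the paper quotes the quadratic near-diagonal bound on $\rho_{2,F_1}$ from Nazarov--Sodin (and points to Lemma \ref{lemrho} for a quantitative version), whereas you supply a brief Kac--Rice sketch of the same fact.
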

\begin{proof}
Note that
\begin{align*}
    P(\exists z\in \zc^{(1)}\cap [0,\tau)^2: \ s^{(1)}(z)<\varsigma )
    &\le\ee[\#\{z\in Z^{(1)}\cap [0,\tau)^2: \ s^{(1)}(z)<\varsigma\}]
    \\&\le \ee\Big[\sideset{}{^{\neq}}\sum_{z_i,z_j\in\zc^{(1)}}1_{[0,\tau)^2}(z_i)1_{B_\varsigma(z_i)}(z_j)\Big] 
    \\&= \int_{[0,\tau)^2}\int_{B_\varsigma(z)}\rho_{2,F_1}(z,w)\, dA(w)dA(z)
    \\&\le C \int_{[0,\tau)^2}\int_{B_\varsigma(z)}|z-w|^2\, dA(w)dA(z)\le C \tau^2 \varsigma^4,
\end{align*}
where we used \cite[Theorem 1.1]{NaSo3} to estimate $\rho_{2,F_1}(z,w)$ (see also Lemma \ref{lemrho}).
\end{proof}

\subsection{Determinants associated to Cauchy and Vandermonde matrices}
\label{secdet}

To study the behaviour of the matrix $\Gamma_F$ we will compare it to Vandermonde and Cauchy type matrices which we now describe.
For $z,w\in\cc^k$, we write the associated Vandermonde and Cauchy matrices as
\begin{align*}
    V(z)=(z_i^{j-1})_{i,j=1}^k, \quad \text{and,}\quad C(z,w)=\Big(\frac{1}{1-z_i \overline{w_j}}\Big)_{i,j=1}^k.
\end{align*}
We also define the $2k\times 2k$ block matrices (here $(z,u)=(z_1,\ldots,z_k,u_1,\ldots,u_k)$)
\begin{align*}
    M_V(z)&=\left. \tfrac{\partial^{k}}{\partial u_1 \ldots \partial u_k}V(z,u) \right|_{u=z}
    =
    \begin{pmatrix}
        1 & z_1 & z_1^2 & \ldots & z_1^{2k-1}
        \\ \vdots & \vdots & \vdots & & \vdots
        \\ 1 & z_k & z_k^2 & \ldots & z_k^{2k-1}
        \\ \hline 0 & 1 & 2z_1 & \ldots & (2k-1) z_1^{2k-2}
        \\ \vdots & \vdots & \vdots & & \vdots
        \\ 0 & 1 & 2z_k & \ldots & (2k-1) z_1^{2k-2}
    \end{pmatrix},
\intertext{and,}
M_C(z)& =\left. \tfrac{\partial^{2k}}{\partial u_1 \ldots \partial u_k \partial \overline{v_1} \ldots \partial \overline{v_k}}C((z,u),(z,v)) \right|_{u=v=z}
   \\ &=
    \begin{pNiceArray}{c|c}
  \Big(\frac{1}{1-z_i \overline{z_j}}\Big)_{i,j=1}^k & \Big(\frac{z_i}{(1-z_i \overline{z_j})^2}\Big)_{i,j=1}^k \\
  \hline
   \Big(\frac{\overline{z_j}}{(1-z_i \overline{z_j})^2}\Big)_{i,j=1}^k & 
   \Big(\frac{1+z_i\overline{z_j}}{(1-z_i \overline{z_j})^3}\Big)_{i,j=1}^k
\end{pNiceArray}.
\end{align*}

Notice that 
\begin{align}\label{eqmat}
    M_VM_V^*=\Gamma_F, \quad \text{and,}\quad M_C=\Gamma_{\widetilde{F}},
\end{align}
where $F(z)=\sum_{n=0}^{2k-1}\zeta_n z^n$ and $\widetilde{F}(z)=\sum_{n=0}^{\infty}\zeta_n z^n$.
The following lemma provides explicit formulas for the determinant of these matrices, which can be understood through the theory of confluent matrices. The Vandermonde case is well-known and, after some rearranging, the Cauchy case can be computed using \cite[Lemma 7]{Va} (see also \cite[Lemma]{GrJo}), so we omit the proof. Alternatively, in this simple setting these determinants can be computed directly in terms of the classical Vandermonde and Cauchy determinants by passing the derivatives outside the determinant using multilinearity. 

\begin{lemma}
    \label{lemcon}
    For $z_1,\ldots,z_k\in\cc$ we have
    \begin{align*}
        \det M_V(z) = (-1)^{\tfrac{k(k-1)}{2}}
        \prod_{1\le i<j\le k} (z_j-z_i)^4, \quad \text{and,} 
        \quad \det M_C(z) = \frac{\prod_{1\le i<j\le k} |z_j-z_i|^8}{\prod_{i,j=1}^k (1-z_i\overline{z_j})^4}.
    \end{align*}
\end{lemma}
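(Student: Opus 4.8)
The statement to prove is Lemma \ref{lemcon}, giving explicit formulas for $\det M_V(z)$ and $\det M_C(z)$.

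The plan is to pass the derivatives outside the determinant using multilinearity, reducing everything to the classical Vandermonde and Cauchy determinant evaluations, followed by a confluence (coalescence) limit.

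\medskip

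\emph{The Vandermonde case.} First I would recall the classical Vandermonde identity $\det V(x_1,\ldots,x_n)=\prod_{1\le i<j\le n}(x_j-x_i)$ for the $n\times n$ matrix $(x_i^{j-1})$. For $M_V(z)$, I would view the $(k+i)$-th row as $\partial_{u_i}$ applied to the row $(1,u_i,u_i^2,\ldots,u_i^{2k-1})$ and then evaluated at $u_i=z_i$; the top $k$ rows are the rows $(1,z_i,\ldots,z_i^{2k-1})$ of the $2k\times 2k$ Vandermonde matrix $V(z_1,\ldots,z_k,u_1,\ldots,u_k)$. Since the determinant is multilinear in rows and the differential operators act on disjoint rows, one has
\[
\det M_V(z)=\left.\frac{\partial^k}{\partial u_1\cdots\partial u_k}\det V(z_1,\ldots,z_k,u_1,\ldots,u_k)\right|_{u=z}
=\left.\frac{\partial^k}{\partial u_1\cdots\partial u_k}\prod_{1\le i<j\le k}(z_j-z_i)\prod_{1\le i<j\le k}(u_j-u_i)\prod_{i,j}(u_j-z_i)\right|_{u=z}.
\]
Here I split the $2k$-variable Vandermonde product into factors among the $z$'s (constant in $u$, pulls out), factors among the $u$'s, and cross factors $\prod_{i,j}(u_j-z_i)$. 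When we apply $\partial_{u_1}\cdots\partial_{u_k}$ and set $u=z$, every term in which some $\partial_{u_m}$ hits the $u$-Vandermonde factor $\prod_{i<j}(u_j-u_i)$ forces a repeated root and vanishes after setting $u=z$ — more precisely, the lowest order in which $\prod_{i<j}(u_j-u_i)$ can be differentiated without vanishing at $u=z$ is the full order $\binom{k}{2}\ge k$ (for $k\ge 2$; the cases $k=0,1$ are trivial), which exceeds the number $k$ of available derivatives whenever... — so that argument needs care. Instead the cleaner route: note $\prod_{i<j}(u_j-u_i)$ vanishes to order $\ge 1$ at $u=z$ along any diagonal $u_a=u_b$, and since we take exactly one derivative per variable, the only surviving contribution is the one where all $k$ derivatives fall on the cross-term $\prod_{i,j}(u_j-z_i)=\prod_j\prod_i(u_j-z_i)$ and none on the pure-$u$ factor, which then contributes $\prod_{i<j}(z_j-z_i)$ evaluated at $u=z$... wait, that still vanishes. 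The resolution is that $\partial_{u_m}$ acting on the cross factor $\prod_i(u_m-z_i)$ produces $\prod_{i\ne m}(u_m-z_i)$ plus terms still containing $(u_m-z_m)$; after setting $u=z$ only $\prod_{i\ne m}(z_m-z_i)$ survives, and then $\prod_{i<j}(z_j-z_i)$ (from the pure-$u$ factor) is \emph{not} set to a coincident value because $u_j\to z_j$ are distinct. So in fact no factor vanishes; every term with a derivative on the pure-$u$ factor survives too. I would therefore organize the Leibniz expansion carefully: writing $P(u)=\prod_{i<j}(u_j-u_i)$ and $Q(u)=\prod_{i,j}(u_j-z_i)$, one has $\partial_{u_1}\cdots\partial_{u_k}(PQ)\big|_{u=z}$, and since $Q(z)=\prod_{i,j}(z_j-z_i)=0$ (the $i=j$ term vanishes!), in fact $Q$ has a simple zero at $u=z$ in each variable $u_m$ (from the factor $(u_m-z_m)$), so every derivative must hit exactly the $(u_m-z_m)$ factor of $Q$ for the term to survive. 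This forces all $k$ derivatives onto $Q$, giving $\left.\partial_{u_1}\cdots\partial_{u_k}Q\right|_{u=z}=\prod_m\prod_{i\ne m}(z_m-z_i)=\prod_{i\ne j}(z_j-z_i)=(-1)^{\binom k2}\prod_{i<j}(z_j-z_i)^2$, times $P(z)=\prod_{i<j}(z_j-z_i)$, times the prefactor $\prod_{i<j}(z_j-z_i)$. Multiplying, $\det M_V(z)=(-1)^{\binom k2}\prod_{i<j}(z_j-z_i)^4$, as claimed.

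\medskip

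\emph{The Cauchy case.} I would proceed identically, starting from Cauchy's determinant formula
\[
\det\Bigl(\tfrac{1}{1-x_i\overline{y_j}}\Bigr)_{i,j=1}^{2k}=\frac{\prod_{1\le i<j\le 2k}(x_j-x_i)(\overline{y_j}-\overline{y_i})}{\prod_{i,j=1}^{2k}(1-x_i\overline{y_j})},
\]
with $x=(z_1,\ldots,z_k,u_1,\ldots,u_k)$ and $y=(z_1,\ldots,z_k,v_1,\ldots,v_k)$. Pulling $\partial^{2k}/\partial u_1\cdots\partial u_k\,\partial\overline{v_1}\cdots\partial\overline{v_k}$ outside the determinant by multilinearity (the $u$-derivatives act on rows, the $\overline v$-derivatives on columns, all distinct), I reduce to differentiating the explicit rational function and evaluating at $u=v=z$. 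The numerator splits into a $z$-only Vandermonde-type factor (constant under the derivatives, pulls out as $\prod_{i<j}|z_j-z_i|^2$), $u$-only and $\overline v$-only factors, and cross factors including $\prod_{i,j}(u_j-z_i)$ and $\prod_{i,j}(\overline{v_j}-\overline{z_i})$; the denominator similarly has a $z$-only part $\prod_{i,j}(1-z_i\overline{z_j})$, mixed parts $\prod(1-u_j\overline{z_i})$, $\prod(1-z_i\overline{v_j})$, and $\prod(1-u_j\overline{v_i})$. As in the Vandermonde case, the factors $\prod_{i,j}(u_j-z_i)$ and $\prod_{i,j}(\overline{v_j}-\overline{z_i})$ each vanish to first order in each variable at $u=v=z$ (from the $i=j$ terms), so the surviving term forces each $\partial_{u_m}$ and each $\partial_{\overline{v_m}}$ to hit precisely these factors, whereupon one collects the residual products and the denominator $\prod_{i,j}(1-u_j\overline{v_i})\to\prod_{i,j}(1-z_i\overline{z_j})$. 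Bookkeeping the powers: the $z$-only numerator gives $\prod_{i<j}|z_j-z_i|^2$; the $u$- and $\overline v$-Vandermonde factors contribute $\prod_{i<j}(z_j-z_i)$ and $\prod_{i<j}(\overline{z_j}-\overline{z_i})$, i.e. another $\prod_{i<j}|z_j-z_i|^2$; the differentiated cross factors contribute $\prod_{i\ne j}(z_j-z_i)\cdot\prod_{i\ne j}(\overline{z_j}-\overline{z_i})=\prod_{i<j}|z_j-z_i|^4$; and the denominators combine to $\prod_{i,j}(1-z_i\overline{z_j})^4$ (the $z$-only one, the two mixed ones each $\to\prod_{i,j}(1-z_i\overline{z_j})$, and the $u\overline v$ one). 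Total numerator $\prod_{i<j}|z_j-z_i|^8$ over $\prod_{i,j}(1-z_i\overline{z_j})^4$, matching the claim. (I would double-check the sign is $+1$ here: each cross-factor derivative contributes $\prod_{i\ne j}(z_j-z_i)=(-1)^{\binom k2}\prod_{i<j}|z_j-z_i|^2$... actually $\prod_{i\ne j}(z_j-z_i)$ over complex $z$'s equals $(-1)^{\binom k2}\bigl(\prod_{i<j}(z_j-z_i)\bigr)^2$, and the two such factors give $(-1)^{k(k-1)}=+1$, so indeed no sign, consistent with the stated formula.)

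\medskip

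\emph{Main obstacle.} The conceptual content is just Leibniz's rule, but the real work — and the step most prone to error — is the combinatorial bookkeeping of which factors the $k$ (resp. $2k$) derivatives are forced onto after the coincidence limit, and the careful tracking of signs and exponents through the coalescence. The key simplifying observation that makes this routine rather than a morass is that the cross factors $\prod_{i,j}(u_j-z_i)$ contain the ``diagonal'' vanishing factors $\prod_m(u_m-z_m)$, which pins down the surviving term uniquely. As the authors note, this can alternatively be cited from the theory of confluent Vandermonde/Cauchy matrices (e.g. \cite[Lemma 7]{Va}), which is the cleaner reference-based route; I would likely present the multilinearity argument as the self-contained alternative and relegate the detailed power-counting to the reader, exactly as the excerpt does by writing ``so we omit the proof.''
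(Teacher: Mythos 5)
Your proof is correct and carries out precisely the ``alternative'' coalescence route that the paper itself sketches (the paper cites confluent Vandermonde/Cauchy matrix references and otherwise omits the proof). The winding passage in the Vandermonde step reflects a confusion worth ironing out: the pure-$u$ factor $\prod_{i<j}(u_j-u_i)$ does \emph{not} vanish at $u=z$ when the $z_i$ are distinct, and what pins every $\partial_{u_m}$ onto the diagonal factor $(u_m-z_m)$ is solely that the cross product $Q(u)=\prod_{i,m}(u_m-z_i)$ has a simple zero in each variable $u_m$ at $u=z$ — you eventually state this correctly and the final power/sign bookkeeping in both cases is right.
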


\subsection{Perturbed lattice description of the zeroes of the GEF}
For future reference, we briefly explain the description of the zeroes of the GEF as a perturbed lattice from \cite{SoTs2}. For $L>0$, consider the square lattice $\Lambda^{(L)}=\{\lambda_{mn}^{(L)}\}_{m,n\in\zz}\subseteq\cc$ of density $L/\pi$ given by,
\[\lambda_{mn}^{(L)}=\sqrt{\pi/L}(m+in).\]
We avoid the superscript $(L)$ when there is no room for confusion.

Recall that $F_1$ denotes the GEF with $L=1$ and let $\zc^{(1)}$ be its zero set.
Then the distribution of $\zc^{(1)}$ is the same as $\{\lambda_{mn}^{(1)}+\xi_{mn}\}_{m,n\in\zz}$, where the random variables $\xi_{mn}$ satisfy the following properties:
\begin{enumerate}[label=(\roman*)]
    \item The distribution of $\{\xi_{mn}\}_{m,n\in\zz}\sim \{\xi_{m+m_0,n+n_0}\}_{m,n\in\zz}$ is invariant under shifts $(m_0,n_0)\in\zz^2$. In particular, the variables $\xi_{mn}$ are identically distributed, but not independent (they are expected to exhibit almost independence at long distances and negative correlation at short ones).
    \item \label{eqexpe} $\ee[e^{\varepsilon|\xi_{00}|^2}]<\infty$ for some $\varepsilon > 0$.
\end{enumerate}

As was commented in \cite{NaSo2}, the bound \ref{eqexpe} (see \cite[Main Theorem]{SoTs2}) can be improved to 
    \begin{align*}
        P(|\xi_{00}|>\tau)\le C e^{-c\tau^4/\logg\tau}.
    \end{align*}
Note that this can be obtained applying \cite[Lemma 7]{Kr} to refine \cite[Lemma 5.1]{SoTs2}. 

It follows from \eqref{eqgef} that $\sqrt{L}\zc^{(L)}$ and $\zc^{(1)}$ have the same distribution.
By a slight abuse of notation, we will frequently write $\zc^{(L)}=\{z_{mn}\}_{m,n\in\zz}$ with $z_{mn}=\lambda_{mn}^{(L)} + L^{-1/2}\xi_{mn}$ and call $\lambda_{mn}^{(L)}$ the associated lattice point of $z_{mn}$. 
 In particular,
    \begin{align}\label{eqdev}
        P(\sqrt{L}|z_{00}|>\tau)\le C e^{-c\tau^4/\logg\tau}.
    \end{align}

\section{Proof of Proposition \ref{propos}}\label{sec3}
We start by showing Marcinkiewicz-Zygmund inequalities for the zeroes of the GEF when we oversample by a $(\logg d)^{1/2+\varepsilon}$ factor. At this scale the perturbed lattice model provides sufficient control to directly implement the results from \cite{SeWa}.

\begin{proof}[Proof of Proposition \ref{propos}]
    It suffices to show the claim for $d$ sufficiently large, since otherwise one can take the failure probability to be 1 and the claim is vacuous. We write $R,L$ instead of $R_d,L_d$. We write $\zc^{(L)}=\{z_{mn}\}_{m,n\in\zz}$ and $\Lambda=\{\lambda_{mn}\}_{m,n\in\zz}$ given by,
 \[\lambda_{mn}=\sqrt{\pi/L}(m+in).\]
 By a union bound, translation invariance and \eqref{eqdev} we have that 
    \begin{multline}\label{eqdev2}
        P(\exists m,n\in\zz:  \   \lambda_{mn}\in B_{2R}(0), |z_{mn}-\lambda_{mn}|>(\logg d)^{-\varepsilon/4})
        \\ \le C L d P(\sqrt{L}|z_{00}|>(\logg d)^{1/4+\varepsilon/4} )\le C e^{-c(\logg d)^{1+\varepsilon/2}}.
    \end{multline}
    We partition the lattice $\Lambda=\bigcup_{j\in J} \Lambda_j$ into $\#J=\lceil \sqrt{L/2}\rceil^2$ disjoint sublattices $\Lambda_j$, that are translations of the lattice $\lceil \sqrt{L/2}\rceil \Lambda$. Notice that these sublattices have density
    \[\frac{2}{\pi}\big(1+O(L^{-1/2})\big)>\frac{1}{\pi},\]
    provided that $d$, and therefore $L$, is sufficiently large. Consider the sets $\zc_j,\zc$ given by
    \begin{align*}
        \zc_j&=\{z_{mn}\in\zc^{(L)}: \ \lambda_{mn}\in \Lambda_j\cap B_{2R}(0)\}\cup (\Lambda_j\cap B_{2R}(0)^c).
        \\ \zc&=\bigcup_{j\in J}\zc_j=\{z_{mn}\in\zc^{(L)}: \ \lambda_{mn}\in B_{2R}(0)\}\cup (\Lambda \cap B_{2R}(0)^c).
    \end{align*}
    
    Choosing $d$ sufficiently large we get that outside a set of probability given by \eqref{eqdev2}, the sets $\zc_j$ are $(\logg d)^{-\varepsilon/4}$-uniformly close to the lattice $\Lambda_j$ (that is, $|z_{mn}-\lambda_{mn}|\le (\logg d)^{-\varepsilon/4}$ for every $z_{mn}\in\zc_j$). Making $d$ larger if necessary and applying \cite[Theorem 1.1]{SeWa} we get
    \begin{align*}
    \|f\|_{p}^p\le C\sum_{z\in \zc_j}|f(z)|^pe^{-p|z|^2/2}, \quad f\in \mathcal{F}^p, j\in J,
\end{align*}
where the constant $C$ only depends on $p$ and may change from line to line. Summing in $j$, we have
\begin{align*}
     \|f\|_{p}^p\le C L^{-1}\sum_{z\in \zc}|f(z)|^pe^{- p|z|^2/2}, \quad f\in \mathcal{F}^p.
\end{align*}

 Notice that $\zc\cap B_R(0)\subseteq\zc^{(L)}\cap B_R(0)$.  So, using \eqref{eqloc} for polynomials $f$ with $\deg f \le d$,
\begin{align*}
    \|f\|_{p}^p &\le CL^{-1} \sum_{z\in \zc^{(L)} \cap B_R(0)}|f(z)|^pe^{- p|z|^2/2} +C \int_{B_{R-1}(0)^c} |f(z)|^pe^{-p|z|^2/2} dA(z)
\end{align*}
Since $d$, and therefore $R$, is sufficiently large we can assume that 
\[(R-1)^2\ge \tfrac{3}{4} R^2=\tfrac{3}{2}d.\] 
Recalling that $ x^2-d-d\log(\tfrac{ x^2}{d})$ is positive and increasing in $x$ if $x^2>d$ and applying Lemma \ref{lemtai},
\begin{align*}
    \|f\|_{p}^p & \le CL^{-1} \sum_{z\in \zc^{(L)} \cap B_R(0)}|f(z)|^pe^{- p|z|^2/2} +C  R^2  e^{-((R-1)^2-d-d\log(\tfrac{(R-1)^2}{d}))/2}\|f\|_{p}^p
    \\ & \le CL^{-1} \sum_{z\in \zc^{(L)} \cap B_R(0)}|f(z)|^pe^{- p|z|^2/2} +C  d (\tfrac{3}{2})^{d/2} e^{-d/4}\|f\|_{p}^p
    \\& \le C^{-1} \sum_{z\in \zc^{(L)} \cap B_R(0)}|f(z)|^pe^{- p|z|^2/2} +\frac{1}{2}\|f\|_{p}^p,
\end{align*}
where the last step holds if $d$ is large enough. Rearranging we get the left-hand side of the Marcinkiewicz-Zygmund inequality \eqref{eqmz}. For the right-hand side notice that by \cite[Theorem 2]{SoTs3} and a union bound, outside a set of probability $O(e^{-c  (\logg d)^{1+2\varepsilon} })$ for every $z\in B_R(0)$,
\[\#\zc^{(L)}\cap B_1(z)= O(L).\] 
Combining this with \eqref{eqloc} completes the proof of \eqref{eqmz} with constants $A_d=AL_d$ and $B_d=BL_d$ where $A,B>0$ are independent of $d$. The final statement follows from a union bound. 
\end{proof}

\begin{rem}
    \label{remrel}
    Proposition \ref{propos} requires oversampling by a factor slighltly bigger than $\sqrt{\logg d}$.
    This can be compared with \cite[Proposition 2]{BaGr3}, which gives a similar estimate for the Paley-Wiener space, when the sampling points are independent and uniformly distributed in an $n$-dimensional cube. Restricting to dimension $2$ and considering a square of area $d$ we see that oversampling by a factor of the order $\logg d$ is needed. In both cases, the oversampling factor corresponds to the scale at which the number of points in balls of unit area remains fairly constant across the region of area $d$ with high probability. For the zeroes of the GEF with $L\gtrsim\sqrt{\logg d}$, this holds because of \cite[Theorem 2]{SoTs3} and a union bound. 
    On the other hand, for the GEF with $L\lesssim (\logg d)^{1/2-\varepsilon}$ this does not hold (see \cite[Theorem 1]{SoTs3}). For the independent samples one can check the right scale through direct computation since the number of points in a ball has a binomial distribution.
\end{rem}

\begin{rem}
    \label{remone} 
If $p=2$, it is possible to get the ratio $B_d/A_d$ in Proposition \ref{propos} arbitrarily close to $1$ if $d_0$ is sufficiently large. This is sometimes referred to as snugness, and entails good numerical properties for the recovery of a function from its samples. This can be achieved by choosing $\Lambda_j$ with density, say, $(\logg d)^{\varepsilon/4}$ rather than constant, carefully tracing the constants and using the fact that lattices of sufficiently high density are snug in the whole Fock space (see \cite{Wa} and also \cite[Theorem 6.5.1]{Gro}).
Naturally, the same should hold for $p\neq 2$, but the authors are unaware of an explicit reference.
\end{rem}

\section{Quantitative estimates for sampling in \texorpdfstring{$\mathcal{F}^p$}{F\^p}}\label{secsa}
In order to prove Theorems \ref{teofull} and \ref{teosam}, we need a quantitative version of \cite[Theorem 1.1]{SeWa}. The proof follows a similar strategy to the original one, but we require two important modifications that make the argument more delicate. Firstly, we have to allow for a set of badly behaved points. Secondly, the global restrictions such as uniform separation between points and their description as a uniformly perturbed lattice are replaced by local restrictions that become less stringent further away from the origin. 

\begin{definition}
    For a discrete set $\zc\in\cc$ and $z\in\zc$ define the product separation at $z\in\zc$ as
    \begin{align*}
    S_\zc(z)&=\prod_{w\in B_1(z)\cap\zc\smallsetminus\{z\}}|z-w|.
    \end{align*}
    We also write $S^{(L)}=S_{\zc^{(L)}}$ for the zero set $\zc^{(L)}$ of the GEF $F_L$. 
\end{definition}
Recalling Definition~\ref{def: s} we see that we may trivially bound $S_\zc(z)\geq s_\zc(z)^{\#B_1(z)\cap\zc}$. However using this crude bound in what follows would lead to significantly worse results. Indeed, in Theorem \ref{teofull} we would need to replace the slowly-growing weight $\omega$ by a function with superpolynomial growth. Accordingly, we work with $S_\zc$.

Throughout this section we work with a set $\zc= \{z_{mn}\}_{m,n\in\zz} \subseteq \cc$ with an associated lattice $\Lambda=\{\lambda_{mn}\}_{m,n\in\zz}=\sqrt{\pi/L}\zz^2$ for some $L>1$ satisfying the following conditions: 
    \begin{enumerate}[label=(\roman*)]
        \item\label{itpl} (Perturbed lattice) For every $m,n\in\zz$, $|z_{mn}-\lambda_{mn}|\le T(|\lambda_{mn}|)$ where $T:[0,\infty)\to [1,\infty)$ is an absolutely continuous function such that $0\le T'(r)\le \tfrac{1}{r}$ for almost every $r>0$;
        \item\label{its} (Separation) For every $z\in\cc$ and $r\ge T(|z|)$,
        \[\#\Big\{w\in\zc\cap B_r(z): \ S_\zc(w)\le S(|{w}|) \Big\}\le \beta \frac{r^2}{\logg^2 r},\]
        where $\beta\ge 1$ and $S:[0,\infty)\to (0,\infty)$ is a decreasing  function such that $-\log S(r)\leq  T(r)^2\logg^4(T(r))$.
    \end{enumerate}

In principle we allow $\zc$ to have repeated entries, since \ref{its} restricts the number of such points.

\begin{proposition}\label{propseip} 
    Let $1\le p<\infty$, $L>1$ and $\Lambda=\{\lambda_{mn}\}_{m,n\in\zz}=\sqrt{\pi/L}\zz^2$. Let $\zc= \{z_{mn}\}_{m,n\in\zz} \subseteq \cc$ be a set satisfying  conditions \ref{itpl} and \ref{its}.
    Then, for every $f\in \mathcal{F}^p$, 
    \begin{align*}
       \|f\|_{p}^p\le C  \sum_{m,n} \omega(m,n) |f(z_{mn})|^p e^{-\frac{p}{2}|z_{mn}|^2},
    \end{align*}
where $\omega(m,n)$ is given by
 \begin{align*}
    \omega(m,n)=\exp\big(C'\beta T(1 +|z_{mn}|)^2\logg^4T(1 +|z_{mn}|)\big),
 \end{align*}
 and the constants $C,C'\ge 1$ depend on $p,L$. Moreover, if $1<L< 2$ one can take $\logg\logg C=\tfrac{C''\beta }{L - 1}$ and $C',C''$ depending only on $p$.
\end{proposition}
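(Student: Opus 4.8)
The plan is to follow the Seip--Wallst\'en strategy for proving sampling in $\mathcal{F}^p$, but to carry out the key localizations so that (a) the weight $\omega(m,n)$ depends only on the local separation/density data through $T$ and $S$ near $z_{mn}$, and (b) a controlled proportion of badly separated points is tolerated. Recall that the classical argument compares $\zc$ with an auxiliary lattice $\Lambda$ of supercritical density and uses the existence of a function $g_\zc$ in the Fock space vanishing on $\zc$ with good pointwise lower bounds off $\zc$ (a ``generating function''); one then writes $f = g_\zc \cdot h$ plus an error, or more precisely one divides $f$ by a partial product over $\zc$ near a given point. The first step is to set up the right partition of $\cc$ into dyadic-type annuli (or into squares whose side length grows like $T(|z|)$) so that within each piece $\zc$ looks like a uniformly perturbed lattice with a uniformly bounded proportion of bad points; condition \ref{itpl} with $0 \le T'(r)\le 1/r$ guarantees $T(r) = O(r^\varepsilon)$ and that $T$ varies slowly enough to make such a partition work, while \ref{its} controls the bad points per piece by $\beta r^2/\logg^2 r$.

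The core estimate is a local one: fix $z_0$ and let $r \approx T(|z_0|)$; I want to show
\[
\|f\|_{p,B_r(z_0)}^p \le C \exp\big(C'\beta\, r^2 \logg^4 r\big) \sum_{z_{mn}\in B_{Cr}(z_0)} |f(z_{mn})|^p e^{-\frac p2 |z_{mn}|^2} + \text{(negligible tail)}.
\]
To get this, I would (i) reduce to $p=2$ or $p=\infty$ first — actually work with $\mathcal{F}^\infty$ or directly with subharmonicity of $|f|^p$ via \eqref{eqloc} — and (ii) use the Bargmann--Fock shift $\mathcal{T}_{z_0}$ from \eqref{eqshi} to center the estimate at the origin, so that the Gaussian weight is absorbed and one is comparing $|f|$ against point values on a near-lattice in $B_{Cr}(0)$. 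Then the heart of the matter is the quantitative interpolation/division step: given the near-lattice points $\{z_{mn}\}$ in $B_{Cr}(0)$, build a polynomial (or Fock function) $P(z) = \prod (1 - z/z_{mn})$-type object whose modulus on $\partial B_{cr}(0)$ is bounded below, and whose modulus near each $z_{mn}$ is controlled below by $|z - z_{mn}| \cdot S(|z_{mn}|)$-type quantities; the number of bad points (at most $\beta r^2/\logg^2 r$ of them, each contributing a factor as bad as $S(|z_{mn}|)^{-1} \le e^{T(|z_{mn}|)^2 \logg^4 T}$) then produces exactly the weight $\exp(C'\beta r^2 \logg^4 r)$ after multiplying these contributions. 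This is where the hypothesis $-\log S(r) \le T(r)^2 \logg^4 T(r)$ is used, and where allowing a proportion (rather than zero) of bad points is essential.

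The main obstacle, and the part requiring the most care, is making the division/generating-function argument genuinely local and uniform across annuli while keeping the constants' dependence on $L$ explicit near $L = 1$. Seip and Wallst\'en's original construction uses global uniform separation and the global perturbed-lattice structure to produce one generating function; here I must instead patch together local generating functions on overlapping pieces $B_{Cr(z)}(z)$, control the overlaps (using slow variation of $T$), and ensure the supercriticality margin $L - 1$ enters only through the number of pieces and a Schur-type testing step — yielding $\logg\logg C = C''\beta/(L-1)$ as claimed. I expect the bookkeeping to split into: a clean $L=2$ (or any fixed $L>1$) version first, then a separate argument tracking how the relevant lattice-density gap $L-1$ degrades the constant, using the standard fact (as in \cite[Lemma 4.31]{Zhu}) that one can pass to a sublattice of density just above $1/\pi$. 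Lemma \ref{lemlow} in the appendix presumably supplies the technical lower bound on the generating function that drives the whole estimate; I would invoke it as a black box at the relevant step. Finally, summing the local estimates over the partition, using that the tails decay (the weight grows subpolynomially so $\sum_{m,n}\omega(m,n)|f(z_{mn})|^p e^{-p|z_{mn}|^2/2}$ still controls $\|f\|_p^p$ after the negligible tail terms are absorbed by the left side for $f$ of finite norm), gives the global weighted inequality.
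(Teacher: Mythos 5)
Your high‑level reading is partly right: the argument does hinge on a quantitative lower bound for a ``generating function'' (which is indeed what Lemma \ref{lemlow} supplies), the weight $\exp(C'\beta\, T^2\logg^4 T)$ does come from the cost of the badly separated points via the hypothesis $-\log S \le T^2\logg^4 T$, and condition \ref{its} is used precisely to control how many bad points there are at each scale. But the route you propose — tile $\cc$ by pieces of side $\sim T(|z|)$, build a \emph{local} generating polynomial on each piece, and ``patch together local generating functions on overlapping pieces'' — is not what the paper does, and you have not addressed the part of that plan that is actually hard.

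The paper avoids patching altogether. It keeps the Seip--Wallst\'en periodization identity $\|f\|_p^p = \sum_{(k,l)}\int_\Delta |\mathcal{T}_{\lambda_{kl}}f|^p e^{-p|z|^2/2}$ intact, and for each lattice shift $(k,l)$ builds one \emph{global} generating function $g_{\lambda_{kl}}$ for the translated set $\zc_{kl}$ — crucially, with the bad points (the set $E_{kl}$) simply \emph{removed from the infinite product} (that is the role of $A_{kl}$ in \eqref{gdef}). Lemma \ref{lemseip} then gives a Lagrange‑type interpolation formula $\mathcal{T}_{\lambda_{kl}}f(z)=\sum_{A_{kl}} \tfrac{\mathcal{T}_{\lambda_{kl}}f(z_{mn}^{(kl)})}{g'_{\lambda_{kl}}(z_{mn}^{(kl)})}\cdot\tfrac{g_{\lambda_{kl}}(z)}{z-z_{mn}^{(kl)}}$, H\"older is applied, and Lemma \ref{lemlow}'s upper/lower bounds on $g_{\lambda_{kl}}$ and $g'_{\lambda_{kl}}$ feed in. The $(L-1)$‑dependence of the constant enters through the Gaussian decay $e^{-c(L-1)|\lambda_{kl}|^2}$ that makes the $(k,l)$‑sum converge; this is where the density margin is actually spent. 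None of this is a patching argument, and there is no $\bar\partial$‑type or overlap step to control.

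The gap in your proposal is precisely the patching step you flag as ``the main obstacle'' and then do not supply. A family of local generating polynomials on overlapping balls $B_{Cr(z)}(z)$ do not glue to anything holomorphic, and the local Bessel‑type inequality you write down — $\|f\|_{p,B_r(z_0)}^p \lesssim \exp(C'\beta r^2\logg^4 r)\sum_{z_{mn}\in B_{Cr}(z_0)}|f(z_{mn})|^p e^{-p|z_{mn}|^2/2} + \text{tail}$ — does not sum to the global estimate without either (a) uniform constants and bounded overlap \emph{and} a mechanism to absorb the ``negligible tail'' from every piece (the tail is not small relative to $\|f\|_{p,B_r(z_0)}^p$; it is small relative to $\|f\|_p^p$, and you would be adding up infinitely many of these), or (b) a genuinely different global argument. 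You also cannot invoke Lemma \ref{lemlow} ``as a black box'' for your local polynomials: that lemma is about the specific infinite canonical product $g$ of \eqref{gdef} compared against the Weierstrass $\upsigma$ of $\Lambda$, and its entire point is that the ``bad'' Weierstrass factors $\upsigma_1,\upsigma_2$ which $g$ does \emph{not} cancel are what produce the $\beta|z|^2/\logg T(1)$ term. That structure does not transfer to finite local products. In short, the proposal identifies the right ingredients but substitutes a local patching scheme for the periodization‑plus‑global‑generating‑function scheme the paper actually uses, and leaves the patching unproven.
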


As mentioned in Remark \ref{rerem}, the constant $C$ is superfluous whenever an explicit dependence on $L$ is not required.

Let us record here for future reference that
\begin{equation*}
    T(r)-T(1)\leq\int_1^rT'(x)dx\leq \log r\le\logg r \qquad\text{for }r\geq 1
\end{equation*}
which means that $S(r)\geq\exp(-T(r)^2\logg^4(T(r)))$ decays subexponentially.

To prove the proposition we need a version of \cite[Lemma 2.2]{SeWa} with explicit constants. The Weierstrass $\upsigma$-function associated to  $\Lambda$ is defined by
\[\upsigma(z)=z  \sideset{}{'}\prod_{m,n} \big(1-\frac{z}{\lambda_{mn}}\big)e^{\frac{z}{\lambda_{mn}}+\frac{1}{2}\frac{z^2}{\lambda_{mn}^2}},\]
where $\sideset{}{'}\prod$ denotes the product without the term $m=n=0$. Recall from \cite[Sec. 2]{SeWa} that
\begin{align}
    \label{eqsig}
    |e^{-\tfrac{L}{2}|z|^2}\upsigma(z)|\ge c d(z,\Lambda),
\end{align}
for an absolute constant $c>0$.

Let $\zc=\{z_{mn}\}_{m,n\in\zz}$ be a set that satisfies the perturbed lattice condition \ref{itpl} for some $T$. Let us check that $2T(r)=r$ has exactly one solution $r_0\ge 2$. Indeed, since $T(r)\ge 1$, a solution must satisfy $r\ge 2$. Using the fact that $T'(r)\le 1/r$, we see that $u(r)=r-2T(r)$ is continuous and strictly increasing in $(2,+\infty)$. The existence of a unique solution $r_0$ now follows from the fact that $u(2)\le 0$ and $u(r)\to +\infty$ as $r\to +\infty$. In particular, $r\ge 2 T(r)$ if and only if $r\ge r_0$. 

For a decreasing function  $S:[0,\infty)\to (0,\infty)$, define
\begin{align*}
    E&=\Big\{z\in\zc : \ S_\zc(z) \le S(|z|) \Big\},
    \\ A&=\{(m,n)\in\zz^2: \ |\lambda_{mn}|\ge r_0, z_{mn}\notin E\}
    \\&=\{(m,n)\in\zz^2: \ |\lambda_{mn}|\ge 2T( |\lambda_{mn}|), z_{mn}\notin E\},
\end{align*}
and,
\begin{align}
\label{gdef}
    g(z)=\prod_{(m,n)\in A} \big(1-\frac{z}{z_{mn}}\big)e^{\frac{z}{z_{mn}}+\frac{1}{2}\frac{z^2}{\lambda_{mn}^2}}.
\end{align}
The condition \ref{itpl} (and the logarithmic growth of $T$) ensures the convergence of the infinite product. Notice that if $z_{mn}=z_{m'n'}$ for $(m,n)\neq (m',n')$ then $S_{\zc}(z_{mn})=0$ and so $z_{mn}\in E$. In particular, $g$ has simple zeroes. 

The following is a quantitative version of \cite[Lemma 2.2]{SeWa} adapted to our definition of $g$. We postpone the proof to Appendix \ref{applem} since in our case we need to track the constants, which makes the argument quite technical.

\begin{lemma}\label{lemlow}
    With the above notation and under the conditions \ref{itpl} and \ref{its},
    there exist constants $c,C>0$ depending on $L$ such that
    \begin{align}
    \label{eqinf}
        |e^{-\tfrac{L}{2}|z|^2}g(z)|&\ge c d(z,\zc)  \exp\big(-C\beta (T^2(1) \logg^4 T(1)+\tfrac{1}{\logg T(1)}|z|^2)\big).
        \end{align}
    In particular for every $(m,n)\in A$,
    \begin{align}
    \label{eqinf2}
        |e^{-\tfrac{L}{2}|z_{mn}|^2}g'(z_{mn})|\ge  c \exp\big(-C\beta (T^2(1) \logg^4 T(1)+\tfrac{1}{\logg T(1)}|z_{mn}|^2)\big).
    \end{align}
    Also, for $|z|\le \sqrt{2\pi/L}$ we have
    \begin{align}
    \label{eqsup}
        |g(z)|&\le e^{CT(1)}.
    \end{align}
    If $1<L<2$, the dependence on $L$ can be dropped from $c$ and $C$.
\end{lemma}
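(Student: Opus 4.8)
The plan is to compare $g$ with the Weierstrass $\upsigma$-function of the square lattice $\Lambda$ and to feed the comparison into \eqref{eqsig}, which says $|\upsigma(z)e^{-\frac{L}{2}|z|^2}|\geq c\,d(z,\Lambda)$. Writing $W(w)=(1-w)e^{w+\tfrac12 w^2}$, so that $\upsigma(z)/z=\prod_{(m,n)\neq(0,0)}W(z/\lambda_{mn})$, a rearrangement of the product \eqref{gdef} gives
\[
g(z)=\Big(\prod_{(m,n)\in A}W(z/\lambda_{mn})\Big)\,P_1(z),\qquad P_1(z)=\prod_{(m,n)\in A}\frac{(1-z/z_{mn})e^{z/z_{mn}}}{(1-z/\lambda_{mn})e^{z/\lambda_{mn}}},
\]
together with $\prod_{(m,n)\in A}W(z/\lambda_{mn})=\upsigma(z)/(z\prod_{(m,n)\notin A,\,(m,n)\neq(0,0)}W(z/\lambda_{mn}))$, where the excluded index set $\{(m,n)\notin A\}$ is the union of the near-origin lattice points $\{|\lambda_{mn}|<r_0\}$ and the badly separated ones $\{z_{mn}\in E\}$. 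Thus, modulo \eqref{eqsig}, the problem reduces to: (i) a lower bound for $|P_1(z)|$; (ii) an upper bound for $\prod_{(m,n)\notin A,\,(m,n)\neq(0,0)}|W(z/\lambda_{mn})|$; and a final step to pass from $d(z,\Lambda)$ to $d(z,\zc)$, which is handled by distinguishing whether $z$ is near an excluded lattice point (where the corresponding $1/|W(z/\lambda_{mn})|$ supplies a compensating factor) or near a retained zero $z_{mn}$, $(m,n)\in A$.

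A recurring tool is the symmetry of the square lattice: since $\Lambda$ is invariant under $\lambda\mapsto-\lambda$ and $\lambda\mapsto i\lambda$, the conditionally convergent sums $\sum_{|\lambda_{mn}|\geq\rho}\lambda_{mn}^{-1}$ and $\sum_{|\lambda_{mn}|\geq\rho}\lambda_{mn}^{-2}$ vanish when taken over $\Lambda$-symmetric regions, which both makes the genus-one and genus-two tails converge and produces the cancellation needed below. For (ii), the near-origin block of $W$-factors is bounded using this cancellation (its exponential parts cancel in modulus) together with $\sum_{|\lambda_{mn}|<r_0}|\lambda_{mn}|^{-1}=O(r_0)=O(T(1))$, giving a loss absorbed into $\exp(O(T(1)^2))$; the bad block is small away from $z$ because condition \ref{its} forces the bad lattice points outside $B_{r_0}(0)$ to be sparse, so $\sum_{z_{mn}\in E,\,|\lambda_{mn}|\geq r_0}|\lambda_{mn}|^{-2}=O(\beta/\logg r_0)=O(\beta/\logg T(1))$ — this is exactly the source of the $|z|^2/\logg T(1)$ term — and is controlled near $z$ by counting the bad points there via \ref{its} and bounding each contribution through the product-separation hypothesis $-\log S(r)\leq T(r)^2\logg^4 T(r)$. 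For (i), I would expand $\log\frac{(1-w_1)e^{w_1}}{(1-w_2)e^{w_2}}=-\tfrac12(w_1^2-w_2^2)+O(|w_1|^3+|w_2|^3)$ with $w_1=z/z_{mn}$, $w_2=z/\lambda_{mn}$, split $A$ into a far part (small $|w_i|$, where the $\lambda^{-2}$-cancellation together with $|z_{mn}-\lambda_{mn}|\leq T(|\lambda_{mn}|)$ and $T(r)\leq T(1)+\log r$ bound the sum) and a near part (where individual factors are $O(1)$ away from $1$ and one again counts the retained zeros and uses the product separations). Assembling these pieces yields \eqref{eqinf}.

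From \eqref{eqinf} the derivative bound \eqref{eqinf2} is immediate: for $(m,n)\in A$ the zero $z_{mn}$ is simple and, as noted after \eqref{gdef}, not a repeated entry of $\zc$, so $d(z,\zc)=|z-z_{mn}|$ for $z$ in a small punctured neighbourhood of $z_{mn}$; dividing \eqref{eqinf} by $|z-z_{mn}|$ and letting $z\to z_{mn}$, where $g(z)/(z-z_{mn})\to g'(z_{mn})$, gives it. The bound \eqref{eqsup} is the same argument run as an upper bound with $|z|\leq\sqrt{2\pi/L}$ fixed: there $|\upsigma(z)|$ is bounded on the disc, $|P_1(z)|=e^{O(1)}$, the bad lattice points form a subset of $\Lambda$ so that $\sum_{z_{mn}\in E}|\lambda_{mn}|^{-3}=O(1)$ contributes only an absolute constant (no $\beta$), and the only term not of size $e^{O(1)}$ is the near-origin block, which is $e^{O(T(1))}$ as above. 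Finally, when $1<L<2$ the spacing $\sqrt{\pi/L}$ ranges over a fixed compact subinterval of $(0,\infty)$ and \eqref{eqsig} holds with an absolute constant, so every estimate can be made with $L$-free constants.

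The main obstacle will be the near-field bookkeeping in step (i): when $\lambda_{mn}$ (equivalently $z_{mn}$) is close to $z$, the individual factors of $P_1$ and of the excluded block are far from $1$, and one must combine the bound on the number of badly separated retained zeros from \ref{its}, the product-separation lower bound $S$, and the slow growth of $T$ so that the accumulated loss over this region is only $\exp\big(C\beta(T(1)^2\logg^4 T(1)+|z|^2/\logg T(1))\big)$ rather than something polynomially larger; a companion subtlety is choosing the near/far cut-off (roughly $|\lambda_{mn}|\asymp|z|$) so that the competing contributions balance. Keeping track of $C$ and its dependence on $L$, and dropping it for $L\in(1,2)$, throughout all of these estimates is what makes the argument technically heavy, which is why it is deferred to Appendix \ref{applem}.
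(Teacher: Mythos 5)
Your plan matches the paper's Appendix~\ref{applem} proof essentially step for step: both compare $g$ to the Weierstrass function via \eqref{eqsig}, split $g/\upsigma$ into the same blocks --- the near-origin lattice factors (your near-origin $W$-block, the paper's $\upsigma_0$), the badly separated zeroes split near and far from $z$ (the paper's $\upsigma_1,\upsigma_2$), and the retained-zero perturbation ratio split near and far from $z$ (your near/far split of $P_1$, the paper's $h_1,h_2$) --- and control each using the same ingredients: lattice symmetry for cancellation of the genus-two tails, counting via \ref{its}, the product-separation lower bound $S$, the logarithmic growth of $T$, and the retained-zero ratio near $z$ to convert $d(z,\Lambda)$ into $d(z,\zc)$. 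The derivation of \eqref{eqinf2} as the limit $z\to z_{mn}$ of \eqref{eqinf} and the observation that the constants become $L$-free for $L\in(1,2)$ are also identical to the paper's.
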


The next step is to prove an analogous result to \cite[Lemma 3.1]{SeWa} that allows for the set $\zc$ to include bad points and have local restrictions on separation and the perturbed lattice description.

\begin{lemma}
    \label{lemseip}
    Let $\zc\subseteq\cc$ satisfy \ref{itpl} and \ref{its} for some $L$ and $T$. Let $g(z)$ be as in \eqref{gdef}. There exists a constant $C\ge 1$ depending on $L$ such that if $T(1) > C$, then for all $f \in \mathcal{F}^{\infty}$,
    \begin{align*}
        f(z) = \sum_{(m,n) \in A} \frac{f(z_{mn})}{g'(z_{mn})}\frac{g(z)}{z-z_{mn}}.
    \end{align*}
    Moreover, if $L<2$ we can take $\logg C=\tfrac{C'\beta }{L - 1}$, where $\beta $ is from \ref{its} and $C'$ is a sufficiently large (absolute) constant.
\end{lemma}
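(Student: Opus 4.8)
The plan is to mimic the classical argument that proves a function in $\mathcal{F}^\infty$ can be recovered from its values on a sampling-like set via a Lagrange-type interpolation series built from $g$, but carrying the quantitative bounds from Lemma \ref{lemlow} through every step. First I would fix $f\in\mathcal{F}^\infty$ and a point $z\notin\zc$, and consider the meromorphic function
\begin{align*}
    h(w)=\frac{f(w)g(z)}{g(w)(z-w)},
\end{align*}
whose poles are simple, located at $w=z$ (residue $-f(z)$, since $g(z)/g(z)=1$) and at $w=z_{mn}$ for $(m,n)\in A$ (residue $\frac{f(z_{mn})}{g'(z_{mn})}\frac{g(z)}{z-z_{mn}}$, using that $g$ has only simple zeroes at these points). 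Integrating $h$ over a sequence of expanding contours $\Gamma_N$ — chosen to stay at distance $\gtrsim d(\Gamma_N,\Lambda)$ from the perturbed lattice, e.g. boundaries of squares whose sides bisect lattice cells, rescaled by $N$ — and summing residues via the residue theorem, the identity will follow once I show the contour integrals tend to $0$ as $N\to\infty$. Rearranging the residue sum gives exactly $-f(z)+\sum_{(m,n)\in A}\frac{f(z_{mn})}{g'(z_{mn})}\frac{g(z)}{z-z_{mn}}=0$, which is the claimed formula; the case $z\in\zc$ then follows by continuity (or is immediate since both sides are entire in $z$).

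The main estimates needed are: (a) an upper bound for $|g(w)|$ on the contours, (b) a lower bound for $|g(w)|$ on the contours, and (c) decay of $|f(w)|$ relative to these. For the upper bound, I would use the Bargmann–Fock growth $|f(w)|\le\|f\|_\infty e^{|w|^2/2}$ together with a bound $|g(w)|\le e^{L|w|^2/2+o(|w|^2)}$ coming from comparing $g$ to the Weierstrass $\upsigma$-function of $\Lambda$ (the ratio $g/\upsigma$ is controlled because $A$ omits only finitely many "bad'' indices near the origin plus the indices with $|\lambda_{mn}|<r_0$, which is a finite set, and the tail of the product matches $\upsigma$ up to the quadratic correction already built into \eqref{gdef}). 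For the lower bound on the contour I would invoke \eqref{eqinf} of Lemma \ref{lemlow}: on $\Gamma_N$ one has $d(w,\zc)\gtrsim 1$ (after possibly perturbing the contour to avoid the points $z_{mn}$, whose displacement from $\Lambda$ is at most $T(|w|)=O(\log|w|)$, negligible on the scale of a lattice cell), so $|e^{-L|w|^2/2}g(w)|\gtrsim \exp(-C\beta(T(1)^2\logg^4 T(1)+\tfrac{1}{\logg T(1)}|w|^2))$. Combining (a)–(c), the integrand on $\Gamma_N$ is bounded by
\begin{align*}
    \frac{e^{|w|^2/2}\,e^{L|w|^2/2}}{e^{L|w|^2/2}\exp(-C\beta(T(1)^2\logg^4T(1)+\tfrac{1}{\logg T(1)}|w|^2))\,|z-w|}\lesssim \frac{\exp\big((\tfrac12+\tfrac{C\beta}{\logg T(1)})|w|^2\big)}{|w|-|z|}\cdot(\text{lower order}),
\end{align*}
wait — this does not decay, so one must be more careful: the correct comparison is $|f(w)g(z)|/|g(w)(z-w)|$ where the $e^{|w|^2/2}$ from $f$ is cancelled by the $e^{-L|w|^2/2}$ only if $L\ge 1$; since $L>1$ we gain $e^{-(L-1)|w|^2/2}$, and this must beat the $\exp(\tfrac{C\beta}{\logg T(1)}|w|^2)$ loss. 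This forces the hypothesis $T(1)>C$ with $\logg C=\tfrac{C'\beta}{L-1}$: it guarantees $\tfrac{C\beta}{\logg T(1)}<\tfrac{L-1}{2}$, so the integrand decays like $e^{-\delta|w|^2}$ along $\Gamma_N$ for some $\delta>0$, and since the length of $\Gamma_N$ grows only linearly, the integral $\to 0$.

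The step I expect to be the main obstacle is establishing the lower bound for $|g(w)|$ \emph{uniformly along the contours} $\Gamma_N$ rather than just at the points $z_{mn}$: Lemma \ref{lemlow} gives \eqref{eqinf} for all $z$, but one must check the contours can be chosen to realize $d(w,\zc)\gtrsim 1$ for all large $N$ simultaneously — this requires knowing that the perturbed lattice $\zc$ does not "drift'' enough over a single lattice cell to block every admissible contour, which follows from $|z_{mn}-\lambda_{mn}|\le T(|\lambda_{mn}|)=O(\log|\lambda_{mn}|)\ll\sqrt{\pi/L}\cdot$(number of cells in a thin annulus), so one can always slalom between points; I would make this precise with a pigeonhole/measure argument over the choice of contour within a family of $\sim\log N$ nested candidate squares. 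A secondary technical point is justifying the interchange of limit and summation (absolute convergence of $\sum_{(m,n)\in A}\frac{f(z_{mn})}{g'(z_{mn})}\frac{g(z)}{z-z_{mn}}$), which again follows from \eqref{eqinf2} for the lower bound on $|g'(z_{mn})|$, the growth bound on $|f(z_{mn})|\lesssim e^{|z_{mn}|^2/2}$, the upper bound on $|g(z)|$, and the gain $e^{-(L-1)|z_{mn}|^2/2}$ dominating the $\exp(\tfrac{C\beta}{\logg T(1)}|z_{mn}|^2)$ factor under the standing hypothesis on $T(1)$.
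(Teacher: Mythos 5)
Your proposal follows the same overall strategy as the paper: Cauchy's integral formula / residue theorem with expanding contours, using Lemma \ref{lemlow} for the key lower bound on $|g|$, and you correctly identify how the hypothesis $\logg T(1)>\tfrac{C'\beta}{L-1}$ forces the integrand to decay like $e^{-\delta|w|^2}$. The convergence argument for the series is also correct and matches the paper.

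However, there is a genuine gap in the contour construction, which you yourself flagged as the main obstacle but then dispatched incorrectly. You claim that $d(\Gamma_N,\zc)\gtrsim 1$ can be achieved by slaloming, on the grounds that the displacement $|z_{mn}-\lambda_{mn}|\le T(|\lambda_{mn}|)=O(\log|\lambda_{mn}|)$ is ``negligible on the scale of a lattice cell.'' This is false: the lattice cell size $\sqrt{\pi/L}$ is a constant, while $T(|\lambda_{mn}|)$ grows without bound, so far from the origin the perturbation is much larger than a cell and points can pile up badly. Worse, condition \ref{its} explicitly allows arbitrarily close and even coincident zeroes (it only caps their number), so there is in general no contour at uniformly positive distance from $\zc$. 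Your fallback pigeonhole over ``$\sim\log N$ nested candidate squares'' also cannot work, because in the annulus at radius $\sim N$ there are $\sim N\,T(N)L$ points to avoid, vastly more than $\log N$ candidate contours. The correct repair is to pigeonhole over a \emph{continuum} of radii: the points of $\zc$ with $|z_{mn}|\in[r,r+1]$ number $O(r\,T(r)\,L)$, so a circle of radius $r_1\in[r,r+1]$ can be found at distance $\gtrsim 1/(r\,T(r)\,L)$ from $\zc$; this polynomial loss is still beaten by the Gaussian gain $e^{-(L-1)|w|^2/2}$. The paper instead exploits condition \ref{its} more directly, splitting $\zc$ into well- and badly-separated points, pigeonholing only over the sparse bad set and weaving around the good points by arcs, thereby achieving $d(\gamma,\zc)\gtrsim\min\{S(r+1),(\beta r^2)^{-1}\}$; both estimates are admissible, but your stated bound $\gtrsim 1$ is simply wrong and its justification is the crux of the error. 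A minor secondary point: you do not actually need a separate upper bound for $|g(w)|$ on the contour (your comparison of $g$ with $\upsigma$) --- the contour integral is handled using only the lower bound \eqref{eqinf} applied to $g$ in the denominator, together with the trivial bound $|f(w)|\le\|f\|_\infty e^{|w|^2/2}$.
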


\begin{proof}
    We prove the statement assuming that $1<L<2$, since otherwise the argument is analogous and no attention has to be paid to tracking the constants' dependence on $L-1$. First we show that the series converges. By \eqref{eqinf2},
    \begin{align*}
    \left|\frac{f(z_{mn})}{g'(z_{mn})}\frac{1}{z-z_{mn}}\right| &\leq |f(z_{mn})e^{-\tfrac{1}{2}|z_{mn}|^2}|\left|\frac{e^{-\frac{L - 1}{2}|z_{mn}|^2+C\beta \big(T(1)^2 \logg^4 T(1) + \tfrac{|z_{mn}|^2}{\logg T(1)}\big)}}{c (z-z_{mn})}\right|
    \\ & \leq \|f\|_{\infty} e^{C\beta T(1)^2\logg^4 T(1)}\frac{e^{-(\frac{L-1}{2} - \frac{C\beta }{\logg T(1)}  )|z_{mn}|^2}}{|z-z_{mn}|},
    \end{align*}
for every $(m,n)\in A$. From the perturbed lattice property (using the fact that $T$ is at most logarithmic) and choosing $T(1)$ large enough to ensure that $ (L-1) > 2C\beta /\logg T(1)$, it follows that the series converges uniformly on compacta to an entire function.
    
    It remains to show that the series coincides with $f(z)$. To this end, let us check that we may take a closed path $\gamma$  lying inside the annulus centred at the origin with inner radius $r$ and outer radius $r+1$, that satisfies $\text{length}(\gamma)\le 8\pi r$ and 
    \[d(\gamma,\zc) > c\min\{S (r+1),(\beta r^{2})^{-1}\},\]
    where $c>0$ is a sufficiently small constant. Here, the factor $S (r+1)$ from the last condition arises from ensuring that $\gamma$ can stay clear from the  ``well'' separated zeroes (which are most of them), while $(\beta r^{2})^{-1}$ allows $\gamma$ to avoid the ``badly'' separated points (of which there are much fewer). To see this, we assume $r$ is sufficiently large and use \ref{its} for $z=0$ and radius $r+1$. First notice that since there are $O(\beta  \tfrac{r^2}{\logg^2 r})$ badly separated points in the annulus $B_{r+1}(0)\smallsetminus B_r(0)$, we can choose a radius $r+\tfrac{1}{4}\le r_{1}\le r+\tfrac{3}{4}$ such that these points are at a distance at least $c(\beta r^{2})^{-1}$ from the circle of centre $0$ and radius $r_{1}$. On the other hand, if $z\in\zc\cap (B_{r+1}(0)-B_r(0))$ is a well separated point, then $s_\zc(z)\ge S_{\zc}(z)\ge S(r+1)$. For $r_{2}=\min\{\tfrac{S(r+1)}{2},\tfrac{1}{4}\}$ it follows that every point of $\zc$ is at a distance at least $r_2$ from the circle of centre $z$ and radius $r_{2}$. So, it suffices to take $\gamma$ as the curve that mainly follows the circle of radius $r_1$, but avoids well separated points $z$ close to this circle by following an arc of the circle centred at $z$ and of radius $r_{2}$. Since $r_{2}\le\tfrac{1}{4}$, we see that $\gamma$ is bounded by the circles centred at $0$ of radii $r$ and $r+1$. 
    
    Denoting the domain enclosed by $\gamma$ by $\Omega$, for $z \in \Omega \smallsetminus \zc$ we obtain
    \begin{align*}
        \frac{1}{2\pi i}\int_{\gamma} \frac{f(\zeta)}{g(\zeta)(\zeta - z)}d\zeta = \frac{f(z)}{g(z)} - \sum_{z_{mn} \in \Omega} \frac{f(z_{mn})}{g'(z_{mn})}\frac{1}{z-z_{mn}}.
    \end{align*}
    By \eqref{eqinf} and the fact that  $S$ decays subexponentially we have
    \begin{align*}
        \left|\int_{\gamma} \frac{f(\zeta)}{g(\zeta)(\zeta - z)}d\zeta\right|&\leq   \frac{C r \|f\|_{\infty}e^{C\beta  T(1)^2 \logg^ 4 T(1)} e^{-(\frac{L-1}{2} -\frac{C\beta }{\logg T(1)})r^2}}{d(\gamma, z)\min\{S (r+1),(\beta r^{2})^{-1}\}}\\
        &\leq  \|f\|_{\infty}e^{C\beta  T(1)^2 \logg^4 T(1)}\frac{e^{-(\frac{L-1}{2} -\frac{C\beta }{\logg T(1)})r^2/2}}{d(\gamma, z)}.
    \end{align*}
    The proof concludes by noting that for fixed $z$ the right hand side tends to 0 as $r \to \infty$.
\end{proof}

Having established the necessary groundwork, we now prove Proposition \ref{propseip}.

\begin{proof}[Proof of Proposition \ref{propseip}]
 As before, we assume that $1<L<2$, since otherwise the argument is analogous and no attention has to be paid to tracking the constants' dependence on $L-1$.
We also assume for now that $\logg T(0) > \frac{C\beta }{L - 1}$ for a sufficiently large constant $C$ depending on $p$. 

Let $\Delta=\sqrt{\pi/L}[0,1)^2$ be the fundamental domain of the lattice $\Lambda$ and recall the Bargmann-Fock shift $\mathcal{T}_{a}$ from \eqref{eqshi}. With this notation $\|f\|_{p}^p$ can be computed as
\begin{align}\label{eqper}
    \frac{p}{2\pi}\int_{\cc} |f(z)|^p e^{-\frac{p}{2}|z|^2} dA(z) = \frac{p}{2\pi}\sum_{(k,l)\in \zz^2} \int_\Delta|\mathcal{T}_{\lambda_{kl}}f(z)|^p e^{-\frac{p}{2}|z|^2}dA(z).
\end{align}
We now consider for each pair $(k,l)\in\zz^2$ the sets 
\begin{align*}
    \zc_{kl} &= \{z^{(kl)}_{mn}\}_{m,n} = \{z_{m-k,n-l}+ \lambda_{kl}\}_{m,n},
    \\ E_{kl} &= \{z \in \zc_{kl} \colon S_{\zc_{kl}}(z) \leq S(|z| + |\lambda_{kl}|) \}, \quad \text{and,}
   \\  A_{kl}& = \{(m,n)\in \zc^2 \colon |\lambda_{mn}| \geq 2T(|\lambda_{mn}| + |\lambda_{kl}|), z^{(kl)}_{mn} \not \in E_{kl} \}.
\end{align*}
Let $g_{\lambda_{kl}}(z)$ be the function defined in \eqref{gdef} for the corresponding set $\zc_{kl}$, that is,
\begin{align*}
    g_{\lambda_{kl}}(z)=\prod_{(m,n)\in A_{kl}} \Big(1-\frac{z}{z^{(kl)}_{mn}}\Big)\exp\Big(\frac{z}{z^{(kl)}_{mn}}+\frac{1}{2}\frac{z^2}{\lambda_{mn}^2}\Big).
\end{align*}

A straightforward but tedious computation shows that the sets $\zc_{kl}$ satisfy the conditions \ref{itpl} and \ref{its} with the corresponding functions $T_{kl}(r) = T( r + |\lambda_{kl}|)$ and $S_{kl}(r) = S(r + |\lambda_{kl}|)$. In particular, the hypotheses of Lemmas \ref{lemlow} and \ref{lemseip} are satisfied.

Thus we can write
\begin{align*}
    \mathcal{T}_{\lambda_{kl}}f(z) = \sum_{(m,n) \in A_{kl}}\frac{\mathcal{T}_{\lambda_{kl}}f(z^{(kl)}_{mn})}{g'_{\lambda_{kl}}(z^{(kl)}_{mn})}\frac{g_{\lambda_{kl}}(z)}{z-z^{(kl)}_{mn}},
\end{align*}
Hölder's inequality $(1/p + 1/q = 1)$ then yields
\begin{align*}
    |\mathcal{T}_{\lambda_{kl}}f(z)| \leq I_{kl}^{1/p} h_{kl}^{1/q}(z),
\end{align*}
where 
\begin{align*}
    I_{kl} &=  \sum_{A_{kl}}\frac{|\mathcal{T}_{\lambda_{kl}}f(z^{(kl)}_{mn})|^p}{|g'_{\lambda_{kl}}(z^{(kl)}_{mn})|^p } e^{p\frac{L-1}{4} |z^{(kl)}_{mn}|^2},\quad\text{and,}
    \\ h_{kl}(z) &= \sum_{A_{kl}}e^{-q\tfrac{L-1}{4}|z^{(kl)}_{mn}|^2}\left|\frac{g_{\lambda_{kl}}(z)}{z-z^{(kl)}_{mn}}\right|^q.
\end{align*}

In consequence,
\begin{align}
\label{eqtrans}
    \int_\Delta |\mathcal{T}_{\lambda_{kl}}f(z)|^pe^{-\frac{p}{2}|z|^2}dA(z) \leq I_{kl}\int_\Delta h_{kl}^{p/q}(z) e^{-\frac{p}{2}|z|^2}dA(z).  
\end{align}
Note that if $(m,n) \in A_{kl}$, 
    \begin{align*}
        |\lambda_{mn}|\le |\lambda_{mn}-z^{(kl)}_{mn}| + |z^{(kl)}_{mn}|\le T_{kl}( |\lambda_{mn}|) + |z^{(kl)}_{mn}|\le \tfrac{|\lambda_{mn}|}{2} + |z^{(kl)}_{mn}|.
    \end{align*}
    In particular, 
    \begin{align}
    \label{eqlamt2}
        |z^{(kl)}_{mn}|\ge \tfrac{|\lambda_{mn}|}{2} \ge T_{kl}( |\lambda_{mn}|).
    \end{align}
So, for $z\in\Delta$, we have
\[|z-z^{(kl)}_{mn}| \geq  T_{kl}(|\lambda_{mn}|) - \sqrt{2\pi}\ge T(0) - \sqrt{2\pi}\ge 1,\] 
where the last step follows from our assumption $\logg T(0)\ge \tfrac{C\beta }{L-1}$, provided $C$ is sufficiently large. Plugging this into \eqref{eqtrans} and using \eqref{eqsup} and \eqref{eqlamt2} we get
\begin{align}\label{eqtau}
    \int_\Delta |\mathcal{T}_{\lambda_{kl}}f(z)|^pe^{-\frac{p}{2}|z|^2}dA(z) 
    & \leq  Ce^{CT_{kl}(1)} I_{kl}\Big(\sum_{A_{kl}}e^{-q\tfrac{L-1}{4}|z^{(kl)}_{mn}|^2}\Big)^{p/q}
    \\ &\leq I_{kl} Ce^{CT_{kl}(1)} \Big(\sum_{A_{kl}}e^{-q\tfrac{L-1}{16}|\lambda_{mn}|^2}\Big)^{p/q} \notag
    \\ &\leq I_{kl} \frac{Ce^{CT_{kl}(1)} }{(L-1)^{p/q}}\leq I_{kl} C\left(\logg T(0)\right)^{p/q}e^{CT_{kl}(1)}\notag\\ &\le I_{kl} C\left(\logg T_{kl}(1)\right)^{p/q}e^{CT_{kl}(1)} \le I_{kl} e^{CT_{kl}(1)},\notag
\end{align}
where in the penultimate step we used again that $\logg T(0)\ge \tfrac{C\beta }{L-1}\ge \tfrac{1}{L-1}$. Regarding $I_{kl}$, \eqref{eqinf2} and a straightforward but lengthy computation give
\begin{align*}
   I_{kl} &
   \leq e^{Cp\beta T_{kl}(1)^2 \logg^4 T_{kl}(1)} 
   \sum_{A_{kl}}|f(z_{m-k,n-l})|^pe^{-\frac{p}{2}|z_{m-k,n-l}|^{2}} e^{-p\big(\frac{L-1}{4} - \tfrac{C\beta }{\logg T_{kl}(1)}\big)|z^{(kl)}_{mn}|^2}
\\ &\leq e^{C\beta T_{kl}(1)^2 \logg^4 T_{kl}(1)}     \sum_{A_{kl}}|f(z_{m-k,n-l})|^pe^{-\frac{p}{2}|z_{m-k,n-l}|^{2}} e^{-p\frac{L-1}{32} |\lambda_{mn}|^2},
\end{align*}
where in the last step we used \eqref{eqlamt2} and that $T(0)$ is sufficiently large.

Returning to our computation of $\|f\|_{p}^p$, using this bound, \eqref{eqper} and \eqref{eqtau}, we get 
\begin{equation}
    \label{efbound}
\begin{split}
    \|f\|_{p}^p &\leq  
        \sum_{(k,l)\in \zz^2} 
        \begin{multlined}[t]e^{C\beta T_{kl}(1)^2 \logg^4 T_{kl}(1)} \cdot
    \\ \cdot\sum_{A_{kl}} |f(z_{m-k,n-l})|^p e^{-\frac{p}{2}|z_{m-k,n-l}|^2}e^{-\frac{p(L-1)}{32}|\lambda_{mn}|^2}
    \end{multlined}
    \\ &\leq  
        \sum_{(m,n)\in \zz^2}  \begin{multlined}[t] |f(z_{mn})|^p e^{-\frac{p}{2}|z_{mn}|^2}
        \cdot
    \\ \cdot \sum_{(k,l)\in \zz^2} e^{C\beta T_{kl}(1)^2 \logg^4 T_{kl}(1)} e^{-\frac{p(L-1)}{32}|\lambda_{mn}+\lambda_{kl}|^2}
    \end{multlined}
    \\ &\leq  
        \sum_{(m,n)\in \zz^2}  \begin{multlined}[t] |f(z_{mn})|^p e^{-\frac{p}{2}|z_{mn}|^2}
        \cdot
    \\ \cdot \sum_{(k,l)\in \zz^2} e^{C\beta T(1 + |\lambda_{kl}| +|\lambda_{mn}|)^2 \logg^4 T(1 + |\lambda_{kl}|+|\lambda_{mn}|)} e^{-\frac{p(L-1)}{32}|\lambda_{kl}|^2}.
    \end{multlined}
\end{split}
\end{equation}

It remains to bound the last sum on the right-hand side. Distinguishing between two cases according to whether $|\lambda_{mn}|$ is larger than $|\lambda_{kl}|$ or not, and  using that $T(y)-T(x)\le \logg(y/x)$ for $0<x<y$, a straightforward computation shows that
\[T(1 + |\lambda_{kl}| +|\lambda_{mn}|)^2 \logg^4 T(1 + |\lambda_{kl}|+|\lambda_{mn}|)\le C \big(T(1  +|\lambda_{mn}|)^2 \logg^4 T(1 +|\lambda_{mn}|)+|\lambda_{kl}|\big).\]
In particular,
\begin{align}
\label{eqlas}
     \sum_{(k,l)\in \zz^2}& e^{C\beta T(1 + |\lambda_{kl}| +|\lambda_{mn}|)^2 \logg^4 T(1 + |\lambda_{kl}|+|\lambda_{mn}|)} e^{-\frac{p(L-1)}{32}|\lambda_{kl}|^2}
    \\ &\le  e^{C\beta T(1 + |\lambda_{mn}|)^2 \logg^4 T(1 + |\lambda_{mn}|)}
     \sum_{(k,l)\in \zz^2}  e^{C\beta |\lambda_{kl}|-\frac{p(L-1)}{32}|\lambda_{kl}|^2}\notag
     \\&\le  e^{C\beta T(1 + |\lambda_{mn}|)^2 \logg^4 T(1 + |\lambda_{mn}|)}e^{\tfrac{C\beta^2}{L-1}}\le e^{C\beta T(1 + |\lambda_{mn}|)^2 \logg^4 T(1 + |\lambda_{mn}|)},\notag
\end{align}
where the penultimate step follows from separating the sum according to whether $C\beta \le \tfrac{p(L-1)}{64}|\lambda_{kl}|$ or not. Notice that since $|\lambda_{mn}|\le|z_{mn}|+T(|\lambda_{mn}|)$ we have
\begin{align*}
    T(1+|\lambda_{mn}|) 
    &\le T(1+|z_{mn}|)+\log \Big(\frac{1+|z_{mn}|+T(|\lambda_{mn}|)}{1+|z_{mn}|}\Big)
    \\&\le T(1+|z_{mn}|)+\log(2T(|\lambda_{mn}|))\le T(1+|z_{mn}|)+\frac{2T(1+|\lambda_{mn}|)}{e}.
\end{align*}
So, we can replace $\lambda_{mn}$ by $z_{mn}$ on the right-hand side of \eqref{eqlas}. This, together with \eqref{efbound}, completes the proof, provided that $\logg T(0) > \frac{C\beta }{L-1}$ for a sufficiently large constant.

If this is not the case, we can take $\widetilde{T}(r) = T(r) + e^{\frac{C\beta }{L-1}}$ which verifies the same conditions as $T$. Applying the previous argument, we obtain 
 \begin{align*}
       \|f\|_{p}^p\le   \sum_{m,n} \omega(m,n) |f(z_{mn})|^p e^{-\frac{p}{2}|z_{mn}|^2},
    \end{align*}
where $\omega(m,n)$ is given by
 \begin{align*}
    \omega(m,n)=\exp\Big(C'\beta \big(T(1 +|z_{mn}|)+e^{\frac{C\beta }{L-1}}\big)^2\logg^4\big(T(1 +|z_{mn}|)+e^{\frac{C\beta }{L-1}}\big)\Big).
 \end{align*}
 The proof follows again by a simple but lengthy computation showing that
 \[
    \omega(m,n)\le \exp(\exp(\tfrac{C\beta }{L-1}))\exp\big(C'\beta (T(1 +|z_{mn}|))^2\logg^4(T(1 +|z_{mn}|))\big).\qedhere
 \]
\end{proof}

\section{Separation for the GEF}\label{secse}
In this section we study the separation between the zeroes of the  GEF. The estimates obtained will be used in Lemma \ref{lemprob} to show that the zeroes satisfy the perturbed lattice and separation conditions \ref{itpl} and \ref{its} with high probability. We start by comparing the $k$-point intensity of the GEF with that of the determinantal hyperbolic GAF. In essence we need a result like \cite[Theorem 1.1]{NaSo3}, but with explicit constants.

\begin{rem}\label{remop}
    Let $F(z)=\sum a_n \zeta_n z^n$ and $\widetilde{F}(z)=\sum b_n \zeta_n z^n$ be two GAFs such that $0\le a_n\le b_n$. Then $\Gamma_F\le \Gamma_{\widetilde{F}}$. Indeed, notice that
    \begin{align*}
     \Gamma_F=\begin{pNiceArray}{c|c}
  \Big(\sum_{n\ge 0} a_n^2 (z_i\overline{z_j})^n\Big)_{ij} & \Big(\sum_{n\ge 0} a_n^2 n z_i^n\overline{z_j}^{n-1}\Big)_{ij} \\
  \hline
   \Big(\sum_{n\ge 0} a_n^2 n z_i^{n-1}\overline{z_j}^{n}\Big)_{ij} & 
   \Big(\sum_{n\ge 0} a_n^2 n^2 (z_i\overline{z_j})^{n-1}\Big)_{ij}
\end{pNiceArray}
    =MDM^*,
    \end{align*}
    where $M$ and $D$ are (unbounded) operators from $\ell_2$ to $\cc^{2k}$ and $\ell_2$ respectively, $D$ is diagonal with $D_{nn}=a_{n-1}^2$, and 
    \begin{align*}
        M_{i,n}=\begin{cases}
            z_i^{n-1} & \text{if } \hphantom{n+1} 1\le i \le k
          \\ (n-1) z_i^{n-2} & \text{if }   \hphantom{1} k+1\le i \le 2k
        \end{cases}.
    \end{align*}

    Since $M D_1 M^*\le M D_2 M^*$ whenever $D_1\le D_2$ (in the sense of operators), the claim follows.
\end{rem}

\begin{lemma}\label{lemrho}
    Let $k\in\nn$, $\tau\ge 2$ and $F(z)=\sum a_n \zeta_n z^n$ be a GAF.
    Then, for $z_1,\ldots,z_k\in B_\tau(0)$,
    \begin{align*}
        \rho_{k,F}(z_1,\ldots,z_k)
        \le \Big(\frac{\sup_{n\ge 0} a_n (2\tau)^{n}}{\min_{0\le n<2k} a_n}\Big)^{4k} \prod_{i<j}|z_i-z_j|^2.
    \end{align*}
    In particular, if $F$ is the GEF of intensity $L/\pi$,
    \begin{align}\label{eqint}
        \rho_{k,F}(z_1,\ldots,z_k)
        \le (2k)^{4k^2}e^{8\tau^2L k}\max\{1,L^{-4k^2}\} \prod_{i<j}|z_i-z_j|^2.
    \end{align}
\end{lemma}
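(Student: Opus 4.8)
The plan is to bound the $k$-point intensity function $\rho_{k,F}$ via the integral formula \eqref{eqrho}, comparing $\Gamma_F$ to the corresponding matrix for a simpler reference GAF whose determinant and inverse we understand explicitly. By Remark \ref{remop}, if $0\le a_n\le b_n$ then $\Gamma_F\le\Gamma_{\widetilde F}$, so in \eqref{eqrho} the Gaussian weight $e^{-\tfrac12\langle\Gamma_F^{-1}\eta',\eta'\rangle}$ is dominated by $1$ (after dropping it entirely) and we only need a lower bound on $\det\Gamma_F$. The natural reference is the truncated GAF $G(z)=\sum_{n=0}^{2k-1} b\, z^n$ with a single constant coefficient $b=\min_{0\le n<2k}a_n$, whose covariance matrix is $b^2 M_V M_V^*$ by \eqref{eqmat}, and here Lemma \ref{lemcon} gives $\det(M_VM_V^*)=|\det M_V|^2=\prod_{i<j}|z_j-z_i|^8$.

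The key steps, in order: First I would observe that since the $z_i\in B_\tau(0)$, we may rescale. Writing $w_i=z_i/(2\tau)$ so that $|w_i|\le 1/2$, the GAF $F(2\tau\,\cdot)$ has coefficients $a_n(2\tau)^n$; the point of the factor $2\tau$ (rather than $\tau$) is to make the comparison GAF's radius of convergence comfortably exceed $1$, which is needed when one later wants to control the hyperbolic-type matrix, though for the bound as stated one can work directly with the polynomial truncation. Second, from Remark \ref{remop} (applied to the rescaled function and the constant-coefficient truncation), $\Gamma_{F(2\tau\cdot)}(w)\ge \Gamma_{G(2\tau\cdot)}(w)$ where $G$ has the $2k$ relevant coefficients replaced by the constant $\min_{0\le n<2k}a_n$ and the higher ones by $0$; hence $\det\Gamma_F(z)=(2\tau)^{?}\det\Gamma_{F(2\tau\cdot)}(w)\ge (\min_{0\le n<2k}a_n)^{4k}\,(2\tau)^{\text{(scaling)}}\prod_{i<j}|w_i-w_j|^8$, and undoing the rescaling in $\prod|w_i-w_j|$ converts back to $\prod|z_i-z_j|^8$. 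Third, for the numerator of \eqref{eqrho}, dropping the Gaussian factor is not integrable, so instead I would keep a rougher Gaussian: bound $\langle\Gamma_F^{-1}\eta',\eta'\rangle\ge \|\Gamma_F\|_{\mathrm{op}}^{-1}|\eta'|^2$ and estimate $\|\Gamma_F\|_{\mathrm{op}}\le \operatorname{tr}\Gamma_F\le C k (\sup_n a_n(2\tau)^n)^2$ (the entries of $\Gamma_F$ are power sums in $z_i\overline z_j$, $|z_i|\le\tau$, so each is at most $\sup_n a_n^2\tau^{2n}$ times a combinatorial factor absorbed into the $(2k)^{4k^2}$), giving $\int_{\cc^k}|\eta_1\cdots\eta_k|^2 e^{-\tfrac12\|\Gamma_F\|^{-1}|\eta'|^2}dA^k = \|\Gamma_F\|^{2k}\cdot(\text{const})^k\le (C k)^{2k}(\sup_n a_n(2\tau)^n)^{4k}$. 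Fourth, assembling: $\rho_{k,F}\le \dfrac{(\text{numerator})}{\pi^{2k}\det\Gamma_F}\le \Big(\dfrac{\sup_n a_n(2\tau)^n}{\min_{0\le n<2k}a_n}\Big)^{4k}\prod_{i<j}|z_i-z_j|^2$, after checking that the $(Ck)^{2k}$ and $\pi^{-2k}$ and the leftover powers of $(2\tau)$ and the Vandermonde exponent $8$ vs $2$ all reconcile (the exponent drops from $8$ to $2$ because each of the four Vandermonde factors in $\det\Gamma_F$ beats the single numerator factor $\prod|z_i-z_j|^2$, leaving $8-2=6$ powers to be absorbed — here one uses $|z_i-z_j|\le 2\tau$, so $\prod_{i<j}|z_i-z_j|^6\le (2\tau)^{6\binom k2}\le (2\tau)^{4k^2}$-type bound, folded into the constant). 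Finally, the GEF specialization \eqref{eqint} is immediate: for $F_L$ we have $a_n=(L/n!)^{1/2}$, so $\sup_n a_n(2\tau)^n\le e^{2\tau^2 L}\sqrt L$ (maximizing $(L(2\tau)^2)^n/n!$ over $n$, which is $\le e^{L(2\tau)^2}$, times $\sqrt L$), and $\min_{0\le n<2k}a_n=\sqrt{L/(2k-1)!}\ge \sqrt L\,(2k)^{-k}$; plugging into the general bound and raising to the $4k$ gives the stated $(2k)^{4k^2}e^{8\tau^2 Lk}\max\{1,L^{-4k^2}\}$, where the $\max$ handles whether $L\ge1$ or $L<1$ in the ratio.

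The main obstacle is bookkeeping the constants so that the Vandermonde exponent genuinely collapses from $8$ to $2$ while everything extraneous is absorbed into the clean prefactor $\big(\sup_n a_n(2\tau)^n/\min_{0\le n<2k}a_n\big)^{4k}$: one must be careful that the factors of $2\tau$ generated by rescaling the $z_i$, the trace bound on $\|\Gamma_F\|$, and the crude use of $|z_i-z_j|\le 2\tau$ to kill $6$ of the $8$ Vandermonde powers are all consistently charged to the $\sup_n a_n(2\tau)^n$ term rather than double-counted. A secondary subtlety is justifying that $\Gamma_F$ is invertible (so \eqref{eqrho} applies) — this follows from $\det\Gamma_F\ge(\min_{0\le n<2k}a_n)^{4k}\prod_{i<j}|z_i-z_j|^8>0$ provided the $z_i$ are distinct, and for coinciding $z_i$ the stated inequality is trivial since the right-hand side vanishes.
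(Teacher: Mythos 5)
Your lower bound on $\det\Gamma_F$ via Remark~\ref{remop} and the constant-coefficient truncation is exactly the paper's first step, but there is a genuine gap in your treatment of the Gaussian integral in \eqref{eqrho}. Bounding $\langle\Gamma_F^{-1}\eta',\eta'\rangle\ge\|\Gamma_F\|_{\mathrm{op}}^{-1}|\eta'|^2$ and integrating gives a quantity of order $\|\Gamma_F\|_{\mathrm{op}}^{2k}$, which carries no dependence at all on the mutual distances $|z_i-z_j|$. Dividing by $\det\Gamma_F\ge a^{4k}\prod_{i<j}|z_i-z_j|^8$ then yields $\rho_{k,F}\lesssim (b/a)^{4k}\prod_{i<j}|z_i-z_j|^{-8}$, which \emph{diverges} as the points approach one another rather than vanishing like the target $\prod_{i<j}|z_i-z_j|^2$. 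Your reconciliation step (``the exponent drops from 8 to 2, leaving 6 powers to absorb via $|z_i-z_j|\le 2\tau$'') cannot be repaired: the Vandermonde product sits in the denominator, so one would need a \emph{lower} bound on $\prod_{i<j}|z_i-z_j|$, and none exists since it can be arbitrarily small.

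The paper avoids this by keeping the Gaussian structure intact. It compares $\Gamma_F\le b^2\Gamma_{\widetilde G}$, where $\widetilde G(z)=\sum\zeta_n(z/2\tau)^n$ is the rescaled hyperbolic GAF, so that after a linear change of variable the numerator is bounded by $b^{4k}\pi^{2k}\det(\Gamma_{\widetilde G})\rho_{k,\widetilde G}$. The two ingredients your estimate discards are then decisive: $\det(\Gamma_{\widetilde G})$ supplies its own factor $\prod_{i<j}|z_i-z_j|^8$ (Lemma~\ref{lemcon}), which cancels the one from $\det\Gamma_F$, leaving $\rho_{k,F}\le(b/a)^{4k}\rho_{k,\widetilde G}$; and the determinantal structure of the hyperbolic GAF (Peres--Vir\'{a}g, Borchardt's identity and Cauchy's determinant) gives $\rho_{k,\widetilde G}\le\prod_{i<j}|z_i-z_j|^2$. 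Neither the cancellation nor the final Vandermonde factor can be recovered from an operator-norm bound on the integral. As a smaller point, for the GEF one has $a_n=\sqrt{L^n/n!}$, so $\min_{0\le n<2k}a_n$ is not $\sqrt{L/(2k-1)!}$ (nor need it occur at $n=2k-1$ when $L<1$); the paper instead uses $L^n/n!\ge\min\{1,L^{2k}\}(2k)^{-2k}$ for $0\le n<2k$.
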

    
\begin{proof}
    Recall from \eqref{eqrho} that for $z_1,\ldots,z_k\in B_\tau(0)$,
    we have that
    \begin{align*}
    \rho_F(z_1\ldots,z_k)=\frac{1}{\pi^{2k}\det (\Gamma_F)}\int_{\cc^k} |\eta_1\ldots\eta_k|^2 e^{-\tfrac{1}{2}\langle \Gamma_F^{-1}\eta', \eta'\rangle}d\eta_1\ldots d\eta_k,
    \end{align*}
    where $\eta'=(0,\eta)$.

    First we apply Remark \ref{remop} to estimate $\det(\Gamma_F)$ from below. 
    Let $a=\min_{0\le n<2k} a_n$ and assume without loss of generality that $a>0$. We compare $F$ with the GAF $ G=a\sum_{n=0}^{2k-1} \zeta_n z^n$. From \eqref{eqmat}, we get
   \[\Gamma_F \ge \Gamma_G=a^2 M_V M_V^*.\]
    In particular, by Lemma \ref{lemcon},
    \begin{align}\label{eqdet2}
        \det(\Gamma_F)\ge a^{4k} \prod_{i<j}|z_i-z_j|^8.
    \end{align}

Regarding the integral in \eqref{eqrho}, we use Remark \ref{remop} again to compare $F$ with the determinantal GAF $\widetilde G(z)=\sum_n \zeta_n (\tfrac{z}{2\tau})^n$. Let ${b}=\sup_{n\ge 0} a_n (2\tau)^{n}$ and suppose without loss of generality that ${b}<\infty$. With this,
    \[\Gamma_F\le {b}^2 \Gamma_{\widetilde{G}}.\]
    So we have
    \begin{align*}
        \int_{\cc^k} |\eta_1\ldots\eta_k|^2 e^{-\tfrac{1}{2}\langle \Gamma_F^{-1}\eta', \eta'\rangle}dA^k(\eta_1\ldots \eta_k) 
    &\le \int_{\cc^k} |\eta_1\ldots\eta_k|^2 e^{-\tfrac{1}{2}\langle {b}^{-2}\Gamma_{\widetilde G}^{-1}\eta', \eta'\rangle}dA^k(\eta_1\ldots \eta_k)
    \\ &\le {b}^{4k} \int_{\cc^k} |\eta_1\ldots\eta_k|^2 e^{-\tfrac{1}{2}\langle \Gamma_{\widetilde G}^{-1}\eta', \eta'\rangle}dA^k(\eta_1\ldots \eta_k)
    \\ &=  {b}^{4k}\pi^{2k}\det(\Gamma_{\widetilde G}) \rho_{\widetilde G}(z_1,\ldots,z_k).
    \end{align*}
    Combining this with \eqref{eqrho}  and \eqref{eqdet2} we get
    \begin{align}\label{eqrhos}
        \rho_F(z_1\ldots,z_k)\le \frac{{b}^{4k}}{a^{4k}\prod_{i<j}|z_i-z_j|^8} \det(\Gamma_{\widetilde G}) \rho_{\widetilde G}(z_1,\ldots,z_k).
    \end{align}
    It remains to estimate $\det (\Gamma_{\widetilde G})$. 
Rescaling \eqref{eqmat} and applying Lemma \ref{lemcon} we arrive at
\[\det (\Gamma_{\widetilde G})=(2\tau)^{2k}\frac{\prod_{1\le i<j\le k} |z_j-z_i|^8}{\prod_{i,j=1}^k (4\tau^2-z_i\overline{z_j})^4}\le \prod_{1 \le i<j\le k} |z_j-z_i|^8, \]
where in the last step we used that $z_1,\ldots,z_k\in B_\tau(0)$. This together with \eqref{eqrhos} gives
\begin{align}\label{eqextr}
    \rho_{F}(z_1,\ldots,z_k)
        \le \big(\tfrac{{b}}{a}\big)^{4k} \rho_{\widetilde G}(z_1,\ldots,z_k).
\end{align}
    By \cite[Theorem 1]{PeVi} and Borchardt's permanent-determinant identity from \cite{Bro} (see also \cite[Proposition 5.1.5]{HKPV}), 
    \begin{align*}
        \rho_{\widetilde G}(z_1,\ldots,z_k)&= (4\pi\tau^2)^{-k}\det \big(\big((1-z_i\overline{z_j}/4\tau^2)^{-2}\big)_{ij}\big) 
        \\ &=(4\pi\tau^2)^{-k}\det\big(\big((1-z_i\overline{z_j}/4\tau^2)^{-1}\big)_{ij}\big)\perm\big(\big((1-z_i\overline{z_j}/4\tau^2)^{-1}\big)_{ij}\big),
    \end{align*}
    where $\perm$ is the permanent of a matrix. Applying Cauchy's determinant formula and a straightforward bound for the permanent shows that for $z_1,\ldots,z_k\in B_\tau(0)$,
    \[\rho_{\widetilde G}(z_1,\ldots,z_k) \le \prod_{i<j}|z_i-z_j|^2.\]
    This together with \eqref{eqextr} proves the general bound. In particular, \eqref{eqint} follows from the fact that $(4\tau^2L)^{n}/n!\le e^{4\tau^2L}$ for every $n\ge 0$ and $L^n/n!\ge \min\{1,L^{2k}\}(2k)^{-2k}$ for every $0\le n < 2k$.
\end{proof}

\begin{proposition}
    [Small scale separation estimate]
    \label{coroloca} 
There are absolute constants $c,C>0$ such that for $L>0$, $\tau\ge 2$, $\sigma>0$ and $k\in\nn$,
\begin{align*}
     P(\#\{z&\in B_\tau(0)\cap \zc^{(L)}: \ S^{(L)}(z)<\sigma\}\ge k)\le e^{-cL^2 \tau^4} +e^{CL^2 \tau^4\logg (\max\{L,L^{-1}\} \tau)} \sigma^{k}.
\end{align*}
\end{proposition}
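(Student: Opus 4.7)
The bound decomposes naturally into an overcrowding contribution giving $e^{-cL^2\tau^4}$ and a small-product contribution giving $e^{CL^2\tau^4\logg(\cdots)}\sigma^k$. I would treat these separately. For the first, set $M=C_0 L\tau^2$ with $C_0$ a sufficiently large absolute constant and establish $P(O)\le e^{-cL^2\tau^4}$, where $O=\{\#(\zc^{(L)}\cap B_{\tau+1}(0))>M\}$; this follows from standard Gaussian-type overcrowding estimates for GAF zeros, obtainable via Jensen's formula applied to $F_L$ on $B_{\tau+2}(0)$ together with concentration of $\log|F_L|$ around its mean.

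On $O^c$, Markov's inequality yields $P(X\ge k,O^c)\le\tfrac{1}{k!}E[(X)_k\mathbf{1}_{O^c}]$, where $X$ denotes the count in the statement. Since the GEF has almost surely simple zeroes, $S^{(L)}(z)>0$, so the pointwise bound $\mathbf{1}_{S^{(L)}(z)<\sigma}\le\sigma/S^{(L)}(z)$ extracts a factor $\sigma^k$ and reduces the problem to bounding
\begin{equation*}
E\Big[\sum_{\substack{z_1,\ldots,z_k\in B_\tau(0)\cap\zc^{(L)}\\ \text{distinct}}}\prod_{i=1}^k\prod_{w\in B_1(z_i)\cap\zc^{(L)}\smallsetminus\{z_i\}}|z_i-w|^{-1}\cdot\mathbf{1}_{O^c}\Big].
\end{equation*}
Each inner product $\prod_w|z_i-w|^{-1}=\prod_w(1+a_w)$ with $a_w=|z_i-w|^{-1}-1\ge 0$ can be expanded via $\prod_w(1+a_w)=\sum_S\prod_{w\in S}a_w\le\sum_S\prod_{w\in S}|z_i-w|^{-1}$ and rewritten as a sum over ordered tuples of distinct neighbours. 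The resulting expression is amenable to the $N$-th factorial moment formula for $\zc^{(L)}$, with $N=k+\sum_i\ell_i$ and $\ell_i$ the number of neighbours selected for $z_i$.

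Finally I would apply Lemma~\ref{lemrho} to each $\rho_N$: the Vandermonde factor $\prod_{p<q}|u_p-u_q|^2$ absorbs the singular factors $|z_i-w_{i,l}|^{-1}$, producing integrable $\prod_l|z_i-w_{i,l}|$ within each cluster (integration over $B_1$) while cross-cluster pairs contribute bounded powers of $2(\tau+1)$. The essential point is that on $O^c$ all relevant points lie in $B_{\tau+1}(0)$ which contains at most $M=C_0L\tau^2$ zeroes, so $N\le M$; this caps Lemma~\ref{lemrho}'s prefactor $(2N)^{4N^2}e^{8(\tau+1)^2LN}\max\{1,L^{-4N^2}\}$ at precisely $e^{CL^2\tau^4\logg(\max\{L,L^{-1}\}\tau)}$, exactly matching the target exponent. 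The summation over compositions $\vec\ell$ with $\sum\ell_i\le M-k$ is then controlled via the multinomial identity $\sum_{\ell_1+\cdots+\ell_k=L_0}\prod_i(1/\ell_i!)=k^{L_0}/L_0!$. The main technical obstacle will be the bookkeeping of coincidences among the $z_i$'s and $w_{i,l}$'s when applying the factorial moment formula: a $w_{i,l}$ may coincide with some $z_j$ or with a $w_{j,l'}$ from another cluster, yielding terms involving $\rho_d$ for $d<N$ that must be shown to fit within the same bound; the factors $|z_i-z_j|^{-1}$ arising in such coincidences combine with the corresponding $|z_i-z_j|^2$ from the Vandermonde to stay integrable, which is what keeps these lower-order contributions within the target estimate.
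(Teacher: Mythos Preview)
Your overall plan is workable, but the paper takes a much more direct route that sidesteps precisely the obstacle you flag as the main difficulty. The overcrowding step is the same. Where you diverge is in extracting $\sigma^k$ via the pointwise bound $\mathbf{1}_{S^{(L)}(z)<\sigma}\le\sigma/S^{(L)}(z)$, which leaves a singular expectation and forces the cluster expansion and coincidence bookkeeping. The paper instead argues deterministically: on $O^c$, list the zeroes in $B_{\tau+1}(0)$ as $z_1,\dots,z_\ell$ with $\ell\le M$ and observe that if $k$ of them satisfy $S^{(L)}(z_{i_j})<\sigma$ then the full Vandermonde $\varrho_\ell:=\prod_{i<j}|z_i-z_j|^2$ obeys $\varrho_\ell\le(2\tau+2)^{\ell^2}\sigma^k$, since $\prod_jS^{(L)}(z_{i_j})$ is a subproduct of $\varrho_\ell$ (each pairwise factor carries exponent at most $2$ there) and the remaining factors are each at most $2\tau+2$. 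Lemma~\ref{lemrho} then gives $\rho_\ell\le C_\ell\,\varrho_\ell$, so one simply integrates a \emph{bounded} function over the sublevel set $\{\varrho_\ell\le(2\tau+2)^{\ell^2}\sigma^k\}$ and sums over $k\le\ell\le M$; no singular integrands and no coincidence patterns ever enter. One caution about your outline: the constraint $\sum_i\ell_i\le M-k$ holds only for the no-coincidence terms, since with coincidences $\sum_i\ell_i$ can be as large as $k(M-1)$ and it is only the number of \emph{distinct} selected points that is capped by $M$. So the coincidence analysis is not a cosmetic add-on but the mechanism that keeps both the Lemma~\ref{lemrho} prefactor and the combinatorial sum under control---which is exactly what the paper's Vandermonde trick gets for free.
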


\begin{proof} 
    Rescaling \cite[Theorem 2]{SoTs3} for intensity $L/\pi$, we have that except for an event of probability $e^{-cL^2\tau^4}$, there are at most $2L(\tau+1)^2$ zeroes in $B_{\tau+1}(0)$. Assume that $B_{\tau+1}(0)\cap \zc^{(L)}=\{z_i\}_{i=1}^\ell$ and that there are at least $k$ zeroes $z_{i_j}\in B_{\tau}(0)$ such that
    \[S^{(L)}(z_{i_j})=\prod_{w\in B_1(z_{i_j})\cap\zc^{(L)}\smallsetminus\{z_{i_j}\}}|z_{i_j}-w|<\sigma,\quad 1\le j \le k.\]
    Then,
    \[\varrho_\ell(z)=\varrho_\ell(z_1,\ldots,z_\ell):= \prod_{i<j}|z_i-z_j|^2\le (2\tau+2)^{\ell^2} \sigma^k.\]
    So,
    \begin{align}\label{eqpart}
        P(\#\{z&\in B_\tau(0)\cap \zc^{(L)}: \ \prod_{w\in B_1(z)\cap\zc^{(L)}\smallsetminus\{z\}}|z-w|<\sigma\}\ge k)
        \\ &\le e^{-cL^2\tau^4} + \sum_{\ell=k}^{2L(\tau+1)^2} P(B_{\tau+1}(0)\cap\zc^{(L)}=\{z_i\}_{i=1}^\ell : \ \varrho_\ell(z)\le (2\tau+2)^{\ell^2} \sigma^k)  \notag
    \end{align}
    By Lemma \ref{lemrho},
    \begin{align*}
        P(B_{\tau+1}(0)&\cap\zc^{(L)}=\{z_i\}_{i=1}^\ell : \ \varrho_\ell(z)\le (2\tau+2)^{\ell^2} \sigma^k) 
        \\ &\le \ee\Big[\sideset{}{^{\neq}}\sum_{z_1,\ldots,z_\ell\in\zc^{(L)}} \one_{\{\varrho_\ell(z)\le(2\tau+2)^{\ell^2} \sigma^k\}} \prod_i \one_{B_{\tau+1}(0)}(z_i) \Big]
        \\ &= \int_{B_{\tau+1}(0)^{\ell}\cap \{\varrho_\ell(z)\le(2\tau+2)^{\ell^2} \sigma^k\}} \rho_{\ell,F}(z)  dA^\ell(z_1\ldots z_{\ell})
        \\ &\le (2\ell)^{4\ell^2} e^{8(\tau+1)^2L \ell} \max\{1,L^{-4\ell^2}\} \int_{B_{\tau+1}(0)^{\ell}\cap \{\varrho_\ell(z)\le(2\tau+2)^{\ell^2} \sigma^k\}} \varrho_\ell(z)  dA^\ell(z_1\ldots z_{\ell})
        \\ &\le C^{\ell^2}\ell^{4\ell^2}  e^{8(\tau+1)^2L \ell} \max\{1,L^{-4\ell^2}\} (\tau+1)^{2\ell}(2\tau+2)^{\ell^2}\sigma^{k}
        \\ &\le   e^{CL^2 \tau^4\logg (\max\{L,L^{-1}\} \tau)} \sigma^{k}.
    \end{align*}
    Combining this with \eqref{eqpart} we conclude that
    \begin{align*}
        P(\#\{z&\in B_\tau(0)\cap \zc^{(L)}: \ \prod_{w\in B_1(z)\cap\zc^{(L)}\smallsetminus\{z\}}|z-w|<\sigma\}>k)
        \\ &\le e^{-cL^2 \tau^4} + e^{CL^2 \tau^4\logg (\max\{L,L^{-1}\} \tau)} \sigma^{k}. \qedhere
    \end{align*}
\end{proof}

\begin{proposition}[Large scale separation estimate] \label{lemindep}
    For $\tau>\tfrac{1}{\sqrt{L}}$ and $N\in\nn$, let $\mathcal{Q}$ be a collection of $N$ disjoint translations of $[0,\tau)^2$. There are absolute constants $c,C,D>0$ such that for $0<\varsigma<\tfrac{1}{\sqrt{L}}$ and 
    \[\lambda\ge D N (L^3 \tau^2 \varsigma^4+L \tau^3\varsigma^{-1}e^{-L \tau^2}),\] 
    we have 
    \begin{align*}
         P(\#\{Q\in\mathcal{Q}:\ \exists z\in\zc^{(L)}\cap Q, \ s^{(L)}(z)<\varsigma\}>  \lambda )
        \le e^{-c\lambda} + e^C N e^{-e^{L\tau^2}}.
    \end{align*} 
\end{proposition}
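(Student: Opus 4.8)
The plan is to bound the number of boxes that contain a badly separated zero by a sum of independent Bernoulli random variables and then apply a Chernoff estimate. The two ingredients are a per-box probability bound and the almost independence of the GEF over long distances.

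\emph{Per-box estimate.} For $Q=a+[0,\tau)^2\in\mathcal Q$ write $B_Q=\{\exists z\in\zc^{(L)}\cap Q:\ s^{(L)}(z)<\varsigma\}$. Using the distributional translation invariance of $\zc^{(L)}$ and the scaling identity $\sqrt L\,\zc^{(L)}\overset{d}{=}\zc^{(1)}$, Lemma~\ref{lemloc} applied with $\tau\rightsquigarrow\sqrt L\tau$ and $\varsigma\rightsquigarrow\sqrt L\varsigma$ gives $P(B_Q)\le CL^3\tau^2\varsigma^4$. Since $\varsigma<1/\sqrt L<\tau$, the event $B_Q$ is measurable with respect to $F_L$ restricted to a fixed enlargement $Q^{+}$ of $Q$ of diameter $O(\tau)$. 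I also record a local regularity event $\mathcal G_Q$, measurable with respect to $F_L|_{Q^{+}}$: that $F_L$ has no zero within a (doubly exponentially small) distance of $\partial Q$, and that $|F_L|e^{-L|\cdot|^2/2}$ is not atypically small at scale $\varsigma$ near $\partial Q$. Combining the intensity bound with the fact that $|F_L(z)|^2e^{-L|z|^2}\sim\mathrm{Exp}(1)$ together with a union bound over an $O(\tau/\varsigma)$-point net of $\partial Q$ yields $P(\mathcal G_Q^{c})\lesssim L\tau^3\varsigma^{-1}e^{-L\tau^2}$.

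\emph{Colouring and almost independence.} Because the boxes in $\mathcal Q$ are pairwise disjoint, any disk of radius $D\tau$ meets only $O(D^2)$ of them; hence for a large absolute constant $D$ the graph on $\mathcal Q$ joining $Q,Q'$ when $\operatorname{dist}(Q,Q')<D\tau$ has bounded degree, and a greedy colouring splits $\mathcal Q=\mathcal Q_1\sqcup\cdots\sqcup\mathcal Q_M$ with $M=O(1)$, so that the enlarged boxes of a fixed colour are pairwise at distance $\gtrsim D\tau$. Fix $\mathcal Q_i$. By the almost independence of the GEF at long distances (\cite{NaSo}, cf.\ \cite{NaSoVo2,NaSoVo}), rescaled to intensity $L/\pi$ and with $D$ chosen large enough to absorb the constant in the double exponential, there is a coupling of $F_L$ with mutually independent copies $(\hat F_Q)_{Q\in\mathcal Q_i}$ (each distributed as $F_L$ recentred at $Q$) such that, outside an event of probability $\le(\#\mathcal Q_i)\,e^{-e^{L\tau^2}}$, one has $F_L\approx\hat F_Q$ uniformly on $Q^{+}$ for every $Q\in\mathcal Q_i$.

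\emph{Transfer and conclusion.} On the good coupling event intersected with $\bigcap_Q\mathcal G_Q$, the uniform closeness transfers (by a Rouch\'e-type argument near the relevant zeros) the occurrence of $B_Q$ for $F_L$ to the slightly inflated event $\widehat B_Q=\{\hat F_Q\text{ has two zeros in }Q^{+}\text{ at distance}<2\varsigma\}$, while $\mathcal G_Q^{c}$ transfers to the analogous event $\widehat{\mathcal G_Q^{c}}$ for $\hat F_Q$; set $D_Q=\widehat B_Q\cup\widehat{\mathcal G_Q^{c}}$, so that the $D_Q$, $Q\in\mathcal Q_i$, are independent with $p':=P(D_Q)\lesssim L^3\tau^2\varsigma^4+L\tau^3\varsigma^{-1}e^{-L\tau^2}$, and $\#\{Q\in\mathcal Q_i:B_Q\}\le\sum_{Q\in\mathcal Q_i}\one_{D_Q}$ on the good event. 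Writing $X=\#\{Q\in\mathcal Q:B_Q\}$, if $X>\lambda$ then either some coupling fails or $\sum_{Q\in\mathcal Q_i}\one_{D_Q}>\lambda/M$ for some $i$; since $\ee\sum_{Q\in\mathcal Q_i}\one_{D_Q}\le Np'$, the Chernoff bound for independent Bernoulli sums gives $P(\sum_{Q\in\mathcal Q_i}\one_{D_Q}>\lambda/M)\le e^{-\lambda/M}$ once $\lambda\ge DN(L^3\tau^2\varsigma^4+L\tau^3\varsigma^{-1}e^{-L\tau^2})$ (after renaming $D$). Summing over the $M=O(1)$ colours yields $P(X>\lambda)\le MN e^{-e^{L\tau^2}}+Me^{-\lambda/M}\le e^{C} N e^{-e^{L\tau^2}}+e^{-c\lambda}$, as claimed (the case $\lambda=O(1)$ being trivial). \emph{The hard part} is the almost-independence step together with the transfer: one must establish the coupling with a doubly exponentially small error at intensity $L/\pi$, and verify that the closeness of $F_L$ and $\hat F_Q$ genuinely transfers the $\varsigma$-scale event $B_Q$ — it is this transfer, and the exclusion of $F_L$ being atypically small at scale $\varsigma$ near $\partial Q$, that forces the secondary term $L\tau^3\varsigma^{-1}e^{-L\tau^2}$ in the hypothesis on $\lambda$.
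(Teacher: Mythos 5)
Your overall architecture is the same as the paper's: split $\mathcal Q$ into $O(1)$ families so that same‑family boxes are well separated, invoke the almost‑independence decomposition $F^*=F_j^*+G_j^*$ from \cite{NaSo} with $F_j$ independent copies of the GEF and coupling error $\lesssim e^{-\tau^2}$ outside a doubly‑exponentially‑small‑probability event, transfer the bad‑separation event for $F$ to an analogous event for the independent $F_j$ via Rouch\'e, and then apply a Bernstein/Chernoff bound to the resulting independent indicators. All of that matches.

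There is, however, a genuine gap in the regularity event $\mathcal G_Q$, which is precisely the piece you flag as the hard part. For the Rouch\'e transfer, the badly separated zero of $F$ can sit \emph{anywhere} in $Q$, so one must ensure $F^*$ (equivalently $F_j^*$) stays above the coupling error $e^{-\tau^2}$ on a circle of radius $2\varsigma$ around that zero, wherever it lies. The paper does this by taking an $\varsigma$\emph{-net of all of $Q$} with $\lceil\tau/\varsigma\rceil^2$ points, letting $\gamma=\bigcup_k\partial B_{2\varsigma}(w_k)$, and applying \cite[Lemma~8]{NaSoVo} to bound $P(\min_{z\in\gamma}|F_j^*(z)|<e^{-\tau^2})\le C\,\tau^3\varsigma^{-1}e^{-\tau^2}$. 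Your $\mathcal G_Q$ instead concerns only a neighbourhood of $\partial Q$ with an $O(\tau/\varsigma)$-point net; this does not cover circles around interior bad points, so the transfer $B_Q\Rightarrow D_Q$ fails. Relatedly, your quoted estimate $P(\mathcal G_Q^c)\lesssim L\tau^3\varsigma^{-1}e^{-L\tau^2}$ does not follow from an $O(\tau/\varsigma)$-point net combined with the pointwise $\mathrm{Exp}(1)$ tail: a union bound with those ingredients gives only $(\tau/\varsigma)e^{-L\tau^2}$, off by a factor of order $L\tau^2$, and in any case a pointwise tail does not control the minimum of $|F^*|$ over a curve. The extra factor and the curve control come from the gradient‑type estimate in \cite[Lemma~8]{NaSoVo} applied to the full $(\tau/\varsigma)^2$-circle net of $Q$. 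Everything else in your outline — the per‑box bound via Lemma~\ref{lemloc}, the colouring, the coupling cost $e^C N e^{-e^{L\tau^2}}$, and the final concentration step — is correct and consistent with the paper.
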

\begin{proof}
    By a rescaling argument, we can assume that $L=1$. Given a numerical constant $A>1$ we can assume without loss of generality that the squares in $\qc$ are $A\tau$-separated. This can be achieved by splitting $\qc$ into a finite number (depending only on $A$) of disjoint families and proving the estimate for each one of them. Let us also introduce the notation $F^*(z)=e^{-\tfrac{1}{2}|z|^2}F(z)$. From \cite[Theorem 3.1]{NaSo}, there is a constant $C>0$ such that  $F^*=F_j^*+G_j^*$ on  $\widetilde{Q}_j:=Q_j+[-2,2]^2$ for each $Q_j\in\qc$, where $F_j$ are independent copies of the GEF and
    \[\logg P(\max_j\max_{z\in \widetilde{Q}_j}|G_j^*(z)|\ge e^{-\tau^2})\le C+\logg N -e^{\tau^2}.\]
    Fix a square $Q_j\in\qc$. Let $\{w_k\}_{k}$ be an $\varsigma$-net for $Q_j$ of size at most $\lceil\tau/\varsigma\rceil^2$ and let $\gamma$ be the curve given by $\bigcup_k \partial B_{2\varsigma}(w_k)\subseteq \widetilde{Q}_j$. By \cite[Lemma 8]{NaSoVo}, 
    \begin{align}
        \label{eqlazo}
        P(E_j):=P(\min_{z\in\gamma} |F_j^*(z)|< e^{-\tau^2})\le C \frac{\tau^3}{\varsigma}e^{-\tau^2}.
    \end{align}
    
    Now assume that $\max_j\max_{z\in \widetilde{Q}_j}|G_j^*(z)|< e^{-\tau^2}$. If there exists $z\in\zc^{(L)}\cap Q_j$ such that $s^{(L)}(z)<\varsigma$, then either the event $E_j$ happens, or, we can apply Rouché's Theorem to ensure that $F_j$ and $F$ have the same number of zeroes in $B_{2\varsigma}(w_k)$ for every $k$. In this case, $F_j$ will have at least two zeroes in $B_{2\varsigma}(w_k)$ where $k$ is such that $z\in B_\varsigma(w_k)$. In particular, there exists $w\in \widetilde{Q}_j$ such that  $s_{F_j}(w)<2\varsigma$. We call this event $\Omega_j$. We have shown that
    \begin{multline}
    \label{eqprobo}
        P(\#\{Q\in\mathcal{Q}:\ \exists z\in\zc^{(L)}\cap Q, \ s^{(L)}(z)<\varsigma\}> \lambda)
    \\ \le P\Big(\sum_j 1_{\Omega_j\cup E_j} >  \lambda \Big)+ e^C N e^{-e^{\tau^2}}.
    \end{multline}
    
    Note that the $\{0,1\}$-valued variables $1_{\Omega_j\cup E_j}$ are  independent since they are defined in terms of $F_j$. Also, by \eqref{eqlazo} and Lemma \ref{lemloc},
    \[\ee\Big[\sum_j 1_{\Omega_j\cup E_j}\Big]\le \sum_{j=1}^N P(\Omega_j)+ P(E_j)\le CN(\tau^2\varsigma^4+\tau^3\varsigma^{-1}e^{-\tau^2})\le \tfrac{\lambda}{2},\]
    assuming (as we may) that $2C\le D$.
    Applying Bernstein's inequality to the variables $X_j=1_{\Omega_j\cup E_j}-\ee[1_{\Omega_j\cup E_j}]$,
    \begin{align*}
         P\Big(\sum_j 1_{\Omega_j\cup E_j} > \lambda\Big)
         \le P\Big(\sum_j X_j >  \tfrac{\lambda}{2}\Big)
         \le e^{-c\lambda}.
    \end{align*}
    Combining this with \eqref{eqprobo} completes the proof.
\end{proof}

\section{Local sampling for the GEF}
\label{secGEF}

In this section we use the separation estimates of the previous section to show that the zeroes of the GEF satisfy the perturbed lattice and separation conditions \ref{itpl} and \ref{its} with high probability. With this we prove our main results Theorems \ref{teofull} and \ref{teosam}.

Let $F$ be the GEF of intensity $L/\pi$. We say that an event depending on $R>0$ has negligible probability in terms of $R$, if its probability is $\le C_L\exp(-c_L\logg R\logg\logg R)$. We also denote:
\begin{itemize}
    \item $T(r):=\logg^{1/4}r\logg\logg r$
    \item $S(r):=\exp(-T(r)^2\logg^4 T(r))$
    \item $\kappa(r):=r^2/\logg^2 r$
\end{itemize}

The following lemma shows that the zeroes of the GEF satisfy \ref{itpl} and \ref{its} with high probability.

\begin{lemma}
\label{lemprob}
Let $L>0$. Outside a set of negligible probability in terms of $R$, the following statements hold simultaneously:
    \begin{enumerate}[label=(\Roman*)]
        \item 
        \label{rigid2} 
        (Perturbed lattice) For every $z\in\zc^{(L)}$ and $\lambda\in\Lambda=\sqrt{\pi/L} \zz^2$ its associated lattice point,  
        \[|z-\lambda|\le T(|\lambda|+R) \le T(2|z|+2R);\]
        \item 
        \label{rigiid2} 
        (Separation) There is a constant $C_L>0$ depending on $L$ such that for every $z\in\cc$ and $r\ge T(e|z|+R)$,
        \[\#\{w\in\zc^{(L)}\cap B_r(z): \ S^{(L)}(w)\le S(2e|w|+R) \}\le C_L \kappa(r).\]
    \end{enumerate}
Moreover, if ${L_0}<L<2{L_0}$ for some ${L_0}>0$, then one can choose the implied constants dependent on ${L_0}$ rather than $L$.
\end{lemma}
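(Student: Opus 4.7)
The plan is to establish conditions \ref{rigid2} and \ref{rigiid2} separately by union-bound arguments, using the tail estimate \eqref{eqdev} together with Propositions~\ref{coroloca} and~\ref{lemindep}.

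For the perturbed lattice condition \ref{rigid2}, I would apply \eqref{eqdev} to each lattice point, obtaining
\[
P\bigl(|z_{mn}-\lambda_{mn}|>T(|\lambda_{mn}|+R)\bigr)\le C\exp\bigl(-cL^2T(|\lambda_{mn}|+R)^4/\logg T(|\lambda_{mn}|+R)\bigr).
\]
Because $T(r)^4/\logg T(r)\asymp \logg r\,(\logg\logg r)^3$, a union bound over $(m,n)$ organized dyadically in $|\lambda_{mn}|$ is dominated by the $O(LR^2)$ lattice points inside $B_R(0)$ and totals at most $Ce^{-cL^2\logg R\,(\logg\logg R)^3}$, well below the required negligibility threshold. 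The secondary inequality $T(|\lambda_{mn}|+R)\le T(2|z_{mn}|+2R)$ is automatic from $|\lambda_{mn}|\le|z_{mn}|+T(|\lambda_{mn}|+R)\le 2|z_{mn}|+R$ once $R$ is large.

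For the separation condition \ref{rigiid2}, I would first discretize: both the count and the target $\kappa(r)$ are monotone in $r$ up to constant factors, so it suffices to verify the bound for $z$ on a unit grid and $r$ in a dyadic sequence, then union-bound. For fixed $(z,r)$ with $r\ge T(e|z|+R)$, set $\sigma_{z,r}:=S(2e\max(|z|-r,0)+R)$, which upper-bounds $S(2e|w|+R)$ for $w\in B_r(z)$ since $S$ is decreasing, so every bad point $w$ in $B_r(z)$ satisfies $S^{(L)}(w)<\sigma_{z,r}$. I then split into two regimes. In the \emph{small-radius regime}, where $r$ only mildly exceeds $T(e|z|+R)$, I apply Proposition~\ref{coroloca} to $B_r(z)$ (via distributional translation invariance of $\zc^{(L)}$) with $\sigma=\sigma_{z,r}$ and $k=\lfloor C_L\kappa(r)\rfloor$. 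A direct computation using $\log(1/S(R))\asymp \logg^{1/2}R\,(\logg\logg R)^6$ shows that $k\log(1/\sigma_{z,r})$ dominates $CL^2r^4\logg r$ by a factor of $\logg\logg R$, so the resulting probability is negligible.

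In the \emph{large-radius regime}, I cover $B_r(z)$ by $N\asymp r^2/\tau^2$ disjoint squares of side $\tau$ chosen so that $L\tau^2$ is a suitably large multiple of $\logg R$; this makes the $e^{-e^{L\tau^2}}$ term in Proposition~\ref{lemindep} super-negligible. By \cite[Theorem~2]{SoTs3} combined with a union bound, outside a small event each such square contains at most $M\asymp L\tau^2$ zeroes; then the AM--GM-type inequality $s^{(L)}(w)\le S^{(L)}(w)^{1/M}$ converts product separation to linear separation, so $S^{(L)}(w)<\sigma_{z,r}$ implies $s^{(L)}(w)<\varsigma:=\sigma_{z,r}^{1/M}$. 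Applying Proposition~\ref{lemindep} with this $\varsigma$ and $\lambda$ chosen so that $M\lambda\le C_L\kappa(r)$ (while $e^{-c\lambda}$ remains negligible) bounds the number of squares carrying a bad point, and multiplying by the per-square cap $M$ yields the desired bound on the total number of bad points in $B_r(z)$. The hard part will be engineering a clean interpolation between the two regimes so that every dyadic $r\ge T(e|z|+R)$ is handled, and simultaneously balancing the scales $r$, $\tau$, and $\logg R$ against the choice of $\varsigma$ dictated by the product-to-linear conversion. The $L_0$-uniform statement for $L_0<L<2L_0$ follows by inspection since every $L$-dependence enters through powers of $L$ that stay bounded on $[L_0,2L_0]$.
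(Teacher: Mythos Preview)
Your outline for \ref{rigid2} matches the paper's argument. For \ref{rigiid2} you have assembled the right ingredients (Propositions~\ref{coroloca} and~\ref{lemindep}, plus the conversion from $S^{(L)}$ to $s^{(L)}$ via a bound on $\#(\zc^{(L)}\cap B_1(w))$), but the specific scales you propose do not close, and the gap you flag as ``the hard part'' is real and not resolved by your scheme.

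First, in the large-radius regime you take $M\asymp L\tau^2$ as the exponent in $s^{(L)}(w)\le S^{(L)}(w)^{1/M}$; but this inequality needs $M\ge\#(\zc^{(L)}\cap B_1(w)\smallsetminus\{w\})$, which is unrelated to the side $\tau$. The correct bound, supplied by \ref{rigid2}, is $\#(\zc^{(L)}\cap B_1(w))\lesssim T(2|w|+2R)^2$. More seriously, your choice $L\tau^2\asymp\logg R$ gives $\tau\asymp(\logg R)^{1/2}$, which exceeds $r$ whenever $r$ is near its minimum $T(e|z|+R)\asymp(\logg R)^{1/4}\logg\logg R$, so $B_r(z)$ cannot even be tiled. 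Even with the corrected $M\asymp T^2$ and a smaller $\tau$ (say $\tau=T$), the negligibility requirement $e^{-c\lambda}\le e^{-c\logg R\logg\logg R}$ together with the budget $M\lambda\le C_L\kappa(r)$ force $r^2\gtrsim T^2\logg^2 r\cdot\logg R\logg\logg R\asymp(\logg R)^{3/2+o(1)}$. On the other side, applying Proposition~\ref{coroloca} directly to $B_r(z)$ needs $r^2\logg^3 r\lesssim\log(1/\sigma_{z,r})\asymp\sqrt{\logg R}(\logg\logg R)^6$, i.e.\ $r\lesssim(\logg R)^{1/4+o(1)}$. This leaves an unbridged range roughly $(\logg R)^{1/4}\lesssim r\lesssim(\logg R)^{3/4}$ where neither of your regimes applies.

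The paper avoids this by never applying either proposition at scale $r$. It fixes $R'\ge R$, covers $B_{2R'}(0)$ by squares of side $T(R')$, and applies Proposition~\ref{coroloca} once per square with threshold $S(R')$; this yields $\#E\cap Q\le C_{L_0}\kappa(T(R'))$ simultaneously for all such $Q$. For $T(R')\le r\le T(R')^7$ one then just sums over the $O((r/T)^2)$ squares meeting $B_r(z)$, using $\logg r\lesssim\logg T$ on this range to recover $C_{L_0}\kappa(r)$. For $r\ge T(R')^7$ it covers by squares of side $T(R')^7$ and applies Proposition~\ref{lemindep} once to each such square (with $\tau=T(R')$, $\varsigma=(\log R')^{-1}$, $\lambda=T(R')^4$); the conversion back to $S^{(L)}$ uses \ref{rigid2} to bound $\#(\zc^{(L)}\cap B_1(w))\le C T(R')^2$, which makes $(\log R')^{-CT(R')^2}>S(R')$ hold. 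Finally, the global statement is obtained not by a direct union bound over a unit grid in $z$ and dyadic $r$, but hierarchically: one proves the intermediate estimate for all $z\in B_{R'}(0)$ and $T(R')\le r\le R'$ with failure probability negligible in $R'$, and then sums over $R'=Re^k$, $k\ge0$, followed by a short case analysis in $|z|$ versus $r$. Your direct union bound would have to replicate this hierarchy to converge.
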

\begin{proof}
 We assume that ${L_0}<L<2{L_0}$ for some ${L_0}$ and track the dependence of the constants in terms of ${L_0}$ rather than $L$. To then obtain constants depending on $L$ we can just take ${L_0}=\tfrac{2}{3}L$.
 
We can also assume that $R\ge R_{L_0}$ for some sufficiently large constant $R_{L_0}$ depending on ${L_0}$ (otherwise we can set the failure probability to 1 so that the stament trivially holds). 

Rescaling \eqref{eqdev},
    \begin{align*}
        P(|z_{00}|>r)\le Ce^{-cL^2 r^4/\logg (\sqrt{L}r)}\le Ce^{-c{L_0}^2 r^4/\logg r},\quad r>\sqrt{{L_0}}.
    \end{align*}
    For $k\ge -1$ define
    \[R_k=Re^k,\quad\text{and}\quad T_k=T(R_k).\]
    By a union bound, the translation invariance of $\zc^{(L)}$ and assuming that $R_{L_0}$ is sufficiently large (specifically, we require $T(R_{L_0}/e)>\sqrt{L_0}$ and $\logg\logg R_{L_0}\ge {L_0}^{-1}$), we get
    \begin{align*}
        P(\exists k\in \nn_0,&\exists m,n\in \zz: \ \lambda_{mn}\in B_{R_k}(0), |z_{mn}-\lambda_{mn}|>T_{k-1})
        \\ &\le C \sum_{k=0}^\infty R_k^2 P(|z_{00}|>T_{k-1})
        \le C \sum_{k=0}^\infty R_k^2  e^{-c{L_0}^2 T_{k-1}^4/\logg T_{k}}
        \\ &\le C R^2  \sum_{k=0}^\infty e^{2k - c{L_0}^2\logg R_k\logg^3\logg R_k} 
        \le C R^2  \sum_{k=0}^\infty e^{2k - c\logg R_k\logg\logg R_k} 
        \\ &\le C R^2  \sum_{k=0}^\infty e^{2k - c(\logg R+k)\logg\logg R}
        \le C R^2  e^{-c\logg R\logg\logg R}\sum_{k=0}^\infty e^{-k(c\logg\logg R-2)}
        \\ &\le  C  e^{-c\logg R\logg\logg R}.
    \end{align*}
    So, the first inequality from \ref{rigid2} holds outside an event of negligible probability. The second inequality follows from the fact that
    \[|\lambda|+R\le |z|+R+T(|\lambda|+R)\le |z|+R+\frac{|\lambda|+R}{2},\]
    provided $R$ is sufficiently large.

    Regarding \ref{rigiid2}, we first show that outside a set of negligible probability with respect to $R'\ge R$, for every $z\in B_{R'}(0)$ and $T({R'})\le r\le {R'}$, we have that
    \begin{align}
        \label{eqe}
        \# E\cap B_r(z)\le  C_{L_0} \kappa(r),
    \end{align}
    where
    \[E=\{w\in\zc^{(L)}: \ S^{(L)}(w)\le S({R'}) \},\]
    and $C_{L_0}>0$ is a sufficiently large constant depending (only) on ${L_0}$.
    
    Proposition \ref{coroloca} will cover the small values of $r$ until the almost independence overtakes and we can use Proposition \ref{lemindep}. First assume that $T({R'})\le r \le T({R'})^7$ (and that $R'\ge R$ is sufficiently large to ensure that $T(R')^7<R'$). Let us cover $B_{2R'}(0)$ by $O(R'^2)$ (pairwise disjoint) squares $Q$ of side-length $T({R'})$. By Proposition \ref{coroloca} and a union bound, outside a set of negligible probability with respect to ${R'}$, the estimate $\#E\cap Q\le C_{L_0} \kappa(T({R'}))$ holds simultaneously for every square $Q$ from the covering and for $C_{L_0}$ sufficiently large. So for every $z\in B_{R'}(0)$, 
    \[\#E\cap B_r(z)\le C_{L_0} (r/T({R'}))^2 \kappa(T({R'}))\le  C_{L_0} \kappa(r).\]
    (Here, as usual, we allow $C_{L_0}$ to change from one occurence to the next.)
    It remains to check the case $r\ge T^7({R'})$.  
    Let $Q_0$ be a square of side-length $T^7({R'})$ intersecting $B_{2{R'}}(0)$, and cover it by a collection $\qc$ of $N:=\lceil T^6({R'})\rceil^2$ squares of side-length $T({R'})$.  Applying Proposition \ref{lemindep} with $\lambda= T^4({R'})$
    we see that outside a set of negligible probability with respect to ${R'}$,
    \begin{align*}
        \#\{Q\in\mathcal{Q}:\ \exists z\in\zc^{(L)}\cap Q, \ s^{(L)}(z)<\logg^{-1} {R'}\}\le  T^4({R'}).
    \end{align*}
    Here we used that for the chosen parameters, all conditions of Proposition \ref{lemindep} are satisfied provided $R'\ge R_{L_0}$ is sufficiently large.
    Since by \ref{rigid2} each $Q\in\qc$ contains at most $C_{L_0} T^2({R'})$ points in $\zc^{(L)}$, we get
    \begin{align*}
        \#\{z\in \zc^{(L)}\cap Q_0: \ s^{(L)}(z)<\logg^{-1} {R'}\}\le   C_{L_0} T^6({R'}).
    \end{align*}
    Notice that if $s^{(L)}(z)\ge \logg^{-1} {R'}$ for $z\in \zc^{(L)}\cap Q_0$, then using \ref{rigid2} again,
    \[S^{(L)}(z)\ge s^{(L)}(z)^{\#\zc^{(L)}\cap B_1(z)}\ge (\logg R')^{- C_{L_0} T^2(R')}> S(R'),\]
    provided that $R'\ge R_{L_0}$ is sufficiently large.
    In particular,
    \[\#E\cap Q_0\le  C_{L_0} T^6({R'}).\]
    By a union bound, outside a set of negligible probability with respect to ${R'}$, for every square $Q$ of side-length $T^7({R'})$ intersecting $B_{2{R'}}(0)$ we have that 
    \[\#E\cap Q\le  C_{L_0} T^6({R'}).\] 
    So for every $z\in B_{R'}(0)$, 
    \[\#E\cap B_r(z)\le C_{L_0} (r/T^7({R'}))^2  T^6({R'})\le C_{L_0} r^2/T^8({R'}) \le C_{L_0} \kappa(r). \]
    This proves \eqref{eqe}.
    
    We assume from now on that \eqref{eqe} holds for every $R_k=e^k R$ with $k\ge 0$, since by a union bound this is satisfied outside a set of negligible probability with respect to $R$. Let $z\in\cc$ and $r\ge T(e|z|+R)$ and suppose first that $|z|>2r$. Define
    \[k_0=\min\{k\ge 0 : \ |z|< R_k\}.\]
    Notice that $z\in B_{R_{k_0}}(0)$ and $r\le R_{k_0}$. Moreover, 
    \[R_{k_0}=
    \begin{cases}
    R & \text{if } k_0=0
    \\ e R_{k_0-1}\le e |z| & \text{if } k_0\ge 1
    \end{cases}
    ,\]
    So, 
    \[T(R_{k_0})\le  T(e|z|+R)\le r.\]
    This allows us to use \eqref{eqe} for $z$, $r$ and $R'=R_{k_0}$. Since we are assuming that $|z|>2r$, we get
    \begin{align}
    \label{eq5}
    \#\Big\{w\in\zc^{(L)}\cap B_r(z): & \ S^{(L)}(w) \le S(2e|w|+R) \Big\}
       \\ & \le \#\Big\{w\in\zc^{(L)}\cap B_r(z):  \ S^{(L)}(w)\le S(e|z|+R) \Big\} \notag
       \\ &\le \#\Big\{w\in\zc^{(L)}\cap B_{r}(z):  \ S^{(L)}(w)\le S(R_{k_0}) \Big\}\le C_{L_0} \kappa(r). \notag
    \end{align}

It remains to check the case $|z|\le 2r$. Since $B_r(z)\subseteq B_{3r}(0)$ and $\kappa(3r)\le C\kappa(r)$, we can assume without loss of generality that $z=0$. If $r\le R$, then we can use \eqref{eqe} for $z=0$, $r$ and $R$ since $r\ge T(R)$. We get
\begin{align}
\label{eq6}
    \#\Big\{w\in\zc^{(L)}\cap B_r(0): & \ S^{(L)}(w) \le S(2e|w|+R) \Big\}
       \\ & \le \#\Big\{w\in\zc^{(L)}\cap B_r(0):  \ S^{(L)}(w)\le S(R) \Big\}
        \Big\}\le C_{L_0} \kappa(r). \notag
\end{align}
On the other hand, if $r\ge R$ define
    \begin{align*}
        k_1=\min\{k\ge 1 : \ r\le R_k\}.
    \end{align*}
Applying \eqref{eqe} for $z=0$, $r=R_j$ and $R'=R_{j}$ for every $0\le j\le k_1$,
    \begin{align*}
        \#\{w\in \zc^{(L)}\cap  & B_r(0):  \ S^{(L)}(w) \le S(2e|w|+R) \}
        \\ &\begin{multlined}[t]
            \le \#\{w\in\zc^{(L)}\cap B_{R}(0):  \ S^{(L)}(w) \le S(2e|w|+R) \} \\ + \sum_{j=1}^{k_1} \#\{w\in\zc^{(L)}\cap B_{R_{j}}(0)\smallsetminus B_{R_{j-1}}(0):  \ S^{(L)}(w) \le S(2e|w|+R) \}
        \end{multlined}
        \\ &\begin{multlined}[t]
            \le \#\{w\in\zc^{(L)}\cap B_{R}(0):  \ S^{(L)}(w) \le S(R) \} \\ + \sum_{j=1}^{k_1} \#\{w\in\zc^{(L)}\cap B_{R_{j}}(0)\smallsetminus B_{R_{j-1}}(0):  \ S^{(L)}(w) \le S(R_j) \}
        \end{multlined}
        \\ & \le C_{L_0}\sum_{j=0}^{k_1} \frac{e^{2j}R^2}{\logg^2(e^j R)}
        \le C_{L_0}\kappa(R_{k_1})\le C_{L_0} \kappa(r).
    \end{align*}
Combining this with \eqref{eq5} and \eqref{eq6}, we obtain \ref{rigiid2}.
\end{proof}

We can now prove our main theorem.
\begin{proof}[Proof of Theorem \ref{teofull}]
    The result follows from applying Proposition \ref{propseip}, using the functions $T(2er+R)$ and $S(2er+R)$ provided by Lemma \ref{lemprob}. A straightforward computation shows that all the requirements for Proposition \ref{propseip} are met.
\end{proof}

For the proof of Theorem \ref{teosam}, we need the following Bessel inequality.
\begin{proposition}\label{propbe}
    (Bessel inequality) Let $d\in \nn$, $1\le p<\infty$ and $1< L <2$.
    There is a constant $C=C(p)$ such that outside a set of negligible probability (in terms of $d$), for every polynomial $f$ of degree $\le d$,
    \[\sum_{z\in\zc^{(L)}}|f(z)|^pe^{-p|z|^2/2}\le C \sqrt{\logg d} \|f\|_{p}^p.\]   
\end{proposition}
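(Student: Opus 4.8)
We sketch the argument. The plan is to reduce, using the subharmonicity estimate \eqref{eqloc}, to a bound on the number of zeroes in unit discs; to extract the factor $\sqrt{\logg d}$ from an overcrowding estimate; and to kill the contribution of the zeroes far from the origin using the tail bound of Lemma~\ref{lemtai}. Fix $R=R_d=\sqrt{2d}$ and set $N(w)=\#\big(\zc^{(L)}\cap B_1(w)\big)$. By \eqref{eqloc}, for each $z\in\zc^{(L)}$ we have $|f(z)|^pe^{-p|z|^2/2}\le C\int_{B_1(z)}|f(w)|^pe^{-p|w|^2/2}\,dA(w)$, so splitting $\zc^{(L)}$ at $B_R(0)$ and interchanging summation and integration,
\[
  \sum_{z\in\zc^{(L)}\cap B_R(0)}|f(z)|^pe^{-p|z|^2/2}\le C\int_{B_{R+1}(0)}N(w)|f(w)|^pe^{-p|w|^2/2}\,dA(w),
\]
and the same bound with $B_{R-1}(0)^c$ in place of $B_{R+1}(0)$ controls the zeroes outside $B_R(0)$. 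It therefore suffices to estimate these two integrals.

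The one genuinely probabilistic — and delicate — step is a uniform overcrowding bound: outside an event of probability negligible in terms of $d$, one has $N(w)\le C_1\sqrt{\logg(d+|w|^2)}$ for every $w\in\cc$. This should follow by covering $\cc$ by unit cells, grouping them by the dyadic scale of $d+|\mathrm{center}|^2$ (so the smallest scale collects $O(d)$ cells and each scale $\gtrsim d$ collects $O(\mathrm{scale})$ cells), and applying to each cell the GEF overcrowding estimate — a fixed unit disc contains at least $m$ zeroes with probability at most $e^{-cm^2\logg m}$, a consequence of the point-count large deviation estimates of \cite{NaSoVo} (for $1<L<2$ the constant $c$ is absolute) — with a threshold $m$ adapted to the scale; a union bound over cells and scales then leaves a failure probability $\le e^{-c\logg d\logg\logg d}$. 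It is exactly the $\logg m$ in the overcrowding rate that produces the exponent $\tfrac12$ on $\logg d$ (rather than an additional $\sqrt{\logg\logg d}$), and this is the crux of the argument. Granting the bound, the first integral is immediately $\le C_1\sqrt{\logg d}\int_\cc|f|^pe^{-p|\cdot|^2/2}\,dA\le C\sqrt{\logg d}\,\|f\|_p^p$.

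For the second integral, the same bound gives $\int_{B_{R-1}(0)^c}N(w)|f(w)|^pe^{-p|w|^2/2}\,dA\le C\int_{B_{R-1}(0)^c}\sqrt{\logg(d+|w|^2)}\,|f(w)|^pe^{-p|w|^2/2}\,dA$. Decomposing the complement of $B_{R-1}(0)$ into the annuli $\{R-1+j\le|w|<R+j\}_{j\ge0}$ and applying Lemma~\ref{lemtai} on each with $r=R-1>\sqrt d$ (the slowly growing weight $\sqrt{\logg(d+|w|^2)}$ being harmless, and the sum over $j$ converging since the exponent $r^2-d-d\log(r^2/d)$ is increasing in $r$ for $r>\sqrt d$), this integral is $\le C\sqrt{\logg d}\,(R-1)^2\,e^{-\frac12\big((R-1)^2-d-d\log\frac{(R-1)^2}{d}\big)}\|f\|_p^p$; since $R^2=2d$ makes the exponent $\ge cd$, the whole expression is $\le\|f\|_p^p$ for $d$ large. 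Adding the two estimates yields $\sum_{z\in\zc^{(L)}}|f(z)|^pe^{-p|z|^2/2}\le C\sqrt{\logg d}\,\|f\|_p^p$, and since the good event is independent of $f$ the inequality holds simultaneously for every polynomial of degree $\le d$. The main obstacle is the uniform overcrowding bound of the second paragraph; the rest is a routine, if slightly tedious, tail estimate.
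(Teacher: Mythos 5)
Your overall plan — subharmonicity \eqref{eqloc} to pass from the point sum to a weighted integral of $|f|^pe^{-p|\cdot|^2/2}$ against the local zero count $N(w)$, an overcrowding bound of order $\sqrt{\logg d}$ on $N(w)$, and the tail estimate of Lemma~\ref{lemtai} on annuli to absorb the far-field contribution — is exactly the paper's. Two small remarks. First, the source of the overcrowding rate $e^{-cm^2\logg m}$ for $\#(\zc^{(L)}\cap B_1)\ge m$ is Krishnapur \cite[Theorem~1]{Kr}, not \cite{NaSoVo}; \cite[Theorem~1]{NaSoVo} is a large-radius deviation estimate (used elsewhere for $\#\zc_d\le(1+\varepsilon)d+o(d)$) and does not directly give the unit-disc overcrowding rate. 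Second, where you carry out a dyadic union bound to obtain the uniform estimate $N(w)\le C_1\sqrt{\logg(d+|w|^2)}$ for \emph{all} $w\in\cc$, the paper economises: it invokes Krishnapur only on $B_{5R}(0)$ to get $N\le 2\sqrt{\logg d}$ there, and off $B_{5R}(0)$ it simply reuses the perturbed-lattice event of Lemma~\ref{lemprob}, which already gives $N(z)\le CT(|z|+R)^2\le C\logg|z|$ — a much cruder bound, but ample once it is multiplied by the superexponentially small tail from Lemma~\ref{lemtai}. Your stronger uniform $\sqrt{\logg}$ bound is provable by the sketched union bound (the failure probability at scale $j$ is $\lesssim 4^jd\cdot e^{-cC_1^2(\logg d+j)\logg\logg(4^jd)}$, summable and negligible for $C_1$ large), so the argument goes through; it is just more work than necessary. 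The linear-versus-dyadic choice of annuli in the tail step is cosmetic.
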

\begin{proof}
Let $R$ be such that $R^2=d+3$. From \cite[Theorem 1]{Kr} and a union bound it follows that outside a set of negligible probability for every $z\in B_{5R}(0)$,
\[\#\zc^{(L)}\cap B_1(z)\le L \sqrt{\logg d}\le 2 \sqrt{\logg d}.\] 
Also, outside a set of negligible probability, the statements in Lemma \ref{lemprob} hold. In particular, for every $z\in B_{5R}(0)^c$ we have
\[\#\zc^{(L)}\cap B_1(z)\le C \logg |z|.\]  
For every $n\in\nn$, let $r_n=2^{n+1}R$ and $A_n=B_{r_{n+1}}(0)\smallsetminus B_{r_n}(0)$. From \eqref{eqloc} applied to each $z\in\zc^{(L)}$, we get
\begin{multline*}
    \sum_{z\in\zc^{(L)}}|f(z)|^pe^{-p|z|^2/2}
    \le C \sqrt{\logg d} \int_{B_{4R}(0)}|f(z)|^pe^{-p|z|^2/2} dA(z) \\ + C \sum_{n=1}^\infty  \logg(r_n)\int_{A_n}|f(z)|^pe^{-p|z|^2/2} dA(z).
\end{multline*}
Applying Lemma \ref{lemtai},
\begin{align}
\label{eqbe}
    \sum_{z\in\zc^{(L)}}|f(z)|^pe^{-p|z|^2/2}
    &\le C \sqrt{\logg d} \|f\|_{p}^p  + C\big(\frac{e}{d}\big)^{d/2} \|f\|_{p}^p \sum_{n=1}^\infty   \logg(r_n) r_n^{d+2}  e^{- r_n^2/2}.
\end{align}
Notice that
\begin{align*}
    \big(\frac{e}{d}\big)^{d/2}\sum_{n=1}^\infty   \logg(r_n) r_n^{d+2}  e^{- r_n^2/2} 
    &\le
    \big(\frac{ e}{d}\big)^{d/2}\int_{2R}^\infty x^{d+2}  e^{- x^2/2}dx
    \\ &\le C
    \big(\frac{ 2e}{d}\big)^{d/2}\int_{\sqrt{2}R}^\infty x^{d+2}  e^{- x^2}dx
    \\&\le C\big(\frac{ 2e}{d}\big)^{d/2}\int_{2 R^2}^\infty y^{(d+1)/2}  e^{- y}dy
    \\&\le C d^{2}(1-\lambda_{\big\lceil \tfrac{d+1}{2}\big\rceil}(\sqrt{2}R)),
\end{align*}
where in the last step we used \eqref{eqei} and Stirling's approximation formula. Since $R^2=d+3\ge 2\lceil \tfrac{d+1}{2}\rceil$, combining this with \eqref{eqbe} and \eqref{eqche} we arrive at
\begin{align*}
    \sum_{z\in\zc^{(L)}}|f(z)|^p&e^{-p|z|^2/2}
    \\ &\le C \|f\|_{p}^p \Big(\sqrt{\logg d}  + d^{2} e^{-\big(2R^2-\big\lceil \tfrac{d+1}{2}\big\rceil-\big\lceil \tfrac{d+1}{2}\big\rceil\log\big(2R^2\big\lceil \tfrac{d+1}{2}\big\rceil^{-1}\big)\big)}\Big)
    \\ &\le C \|f\|_{p}^p \Big(\sqrt{\logg d}  + d^{2} e^{-(3-\log 4)\big\lceil \tfrac{d+1}{2}\big\rceil}\Big). \qedhere
\end{align*}
\end{proof}

\begin{proof}[Proof of Theorem \ref{teosam}]
As usual, we allow all the involved constants to depend on $p$.
Proposition \ref{propbe} gives the inequality on the right-hand side. For the left-hand side, we apply Theorem \ref{teofull} for $R=R_d$ (recall that $R_d^2=d+C_1\sqrt{d\logg d}$). We can assume that $d\ge C_2$ since for $d\le C_2$ the failure probability can be chosen as 1 so that the claim trivially holds. Since $L_d-1\ge\tfrac{C_3}{\logg\logg d}$, we see that outside of an event of negligible probability in terms of $d$, for every polynomial $f$ of degree $\le d$,
\begin{align}\label{eqparti}
    \|f\|_{p}^p&\le \begin{multlined}[t]
        e^{\sqrt{\logg d}}\omega(2R_d) \sum_{z\in \zc^{(L_d)}\cap B_{R_d}(0)}|f(z)|^pe^{-p|z|^2/2}
        \\ + e^{\sqrt{\logg d}} \sum_{z\in \zc^{(L_d)}\cap B_{R_d}(0)^c}\omega(2|z|) |f(z)|^pe^{- p|z|^2/2}
    ,
    \end{multlined}
\end{align}
provided $C_3$ is sufficiently large.

From Lemma \ref{lemprob} we know that outside of a set of negligible probability,
\[\#\zc^{(L_d)}\cap B_1(z)\le C T(|z|+R_d)^2, \quad z\in\cc.\]
For every integer $n\ge -1$, let $r_n=2^{n}R_d$. 
Using 
\eqref{eqloc} and Lemma~\ref{lemtai},
\begin{align*}
    \sum_{z\in \zc^{(L_d)}\cap B_{R_d}(0)^c}\omega(2|z|) &|f(z)|^p
    e^{- p|z|^2/2}  
    \\ &\le C
    \sum_{n=1}^\infty T^2(r_{n+1})\omega(r_{n+1})\int\limits_{r_{n-1}-1\le |w|\le r_n +1} |f(w)|^pe^{- p|w|^2/2}  dA(w)
    \\& \le C
    \sum_{n=1}^\infty r_{n}^{3}\big(\frac{ r_n^2}{d}\big)^{d/2} e^{-( r_{n-2}^2-d)/2}\|f\|_{p}^p
    \\ &\le C \sum_{n=1}^\infty 2^{(3+d)n}e^{-d(2^{2n{-4}}-1)/2} R_d^{3}\big(\frac{ R_d^2}{d}\big)^{d/2} e^{-( R_d^2-d)/2}\|f\|_{p}^p.
\end{align*}
Notice that $(1+1/x)^x\le e^{1-1/4x}$ for every $x\ge 1$. We use this for 
\[x=\frac{d}{R_d^2-d}=\frac{\sqrt{d}}{C_1\sqrt{\logg d}},\] 
where we are assuming that $d$ is large enough so that $x\ge 1$. Assuming $C_1$ is sufficiently large, we get
\begin{align*}
    \sum_{z\in \zc^{(L_d)}\cap B_{R_d}(0)^c}\omega(2|z|) |f(z)|^pe^{- p|z|^2/2} & 
    \le C \sum_{n=1}^\infty 2^{3n}e^{-d(2^{2n{-4}}-2n-1)/2} R_d^{3} e^{-\tfrac{( R_d^2-d)^2}{8d}}\|f\|_{p}^p
    \\ &\le C \sum_{n=1}^\infty 2^{3n}e^{-(2^{2n{-4}}-2n-1)} d^{3} e^{-C_1\logg d{/8}}\|f\|_{p}^p
     \\ &\le C  d^{3-C_1/8} \|f\|_{p}^p.
\end{align*}
Combining this with \eqref{eqparti} and assuming that $C_1$ and $d$ are sufficiently large we obtain
\begin{align*}
    \|f\|_{p}^p&\le Ce^{\sqrt{\logg d}}\omega(2R_d) \sum_{z\in \zc^{(L_d)}\cap B_{R_d}(0)}|f(z)|^pe^{- p|z|^2/2}+\frac{\|f\|_{p}^p}{2}.
\end{align*}
Rearranging the inequality we get the desired result.
\end{proof}

We finish this section by showing the optimality up to $\logg\logg$ terms of the weight $\omega$ in Theorem \ref{teofull}.

\begin{rem}[Optimality of the weight] \label{remopt}
Assume, for example, that Theorem \ref{teofull} holds with $\logg \omega (r)=\sqrt{\logg r}/\logg\logg r$. Let $\tau>0$. By \cite[Theorem 1]{SoTs3}, the probability that $\zc^{(L)} \cap B_\tau(0)=\varnothing$ is bigger than $e^{-D\tau^4}$ for some constant $D>0$. 
One can then show that there exist $\tau_0,\widetilde{D}>0$ such that with high probability for every $\tau>\tau_0$, there exists $a\in\cc$ with $|a|\le e^{\widetilde{D}\tau^4}$ such that $\zc^{(L)}\cap B_\tau(a)=\varnothing$. This follows from the almost independence over long distances using an analogous argument to the proof of Proposition~\ref{lemindep}, so we skip the details. 
Now consider $f(z)=\mathcal{T}_a1(z)=e^{\overline{a}z-|a|^2/2}$. It is clear that
\[\|f\|_{p}=\|1\|_{p}=1.\]
On the other hand, from Lemma \ref{lemprob} we know that outside of a set of negligible probability,
\[\#\zc^{(L)}\cap B_1(z)\lesssim T(|z|+R)^2, \quad z\in\cc.\]
So by \eqref{eqloc},
\begin{align*}
    1&=\|f\|_{p}\lesssim \sum_{z\in\zc^{(L)}}\omega(|z|+R)|f(z)|^p e^{-p|z|^2/2}
    =\sum_{z\in\zc^{(L)}}\omega(|z|+R) e^{-p|z-a|^2/2}
    \\ &\lesssim \int_{B_{\tau-1}(a)^c}T(|z|+R)^2 \omega(|z|+R) e^{-p|z-a|^2/2} dA(z)
    \\ &\lesssim e^{\frac{C\sqrt{\logg (|a|+R)}}{\logg\logg (|a|+R)}}\int_{B_{\tau-1}(a)^c} e^{\frac{C\sqrt{\logg |z-a|}}{\logg\logg |z-a|}} e^{- p|z-a|^2/2} dA(z)
    \\ &\lesssim  e^{\frac{C\tau^2}{\logg \tau}} e^{-c\tau^2} \xrightarrow[]{\tau\to\infty} 0,
\end{align*}
leading to a contradiction. We showed that with high probability the weight cannot be significantly improved.

Regarding the sharpness in Theorem \ref{teosam}, it is easy to show something weaker, namely, that with non-negligible probability, the sampling constants cannot be significantly improved. We discuss the general idea and skip the details. For the left-hand side inequality in Theorem \ref{teosam}, one can proceed similarly as before and notice that with non-negligible probability $\zc_d \cap B_\tau(0)=\varnothing$, where $\tau=(\logg d)^{1/4}/\logg \logg d$. Taking $f=1$ and proceeding as before yields the sharpness up to $\log\log$ terms in the exponent of the sampling constant $A_d$. For the right-hand side inequality in Theorem \ref{teosam}, notice that by \cite[Theorem~1]{Kr}  $\#\zc_d \cap B_1(0)\ge \sqrt{\logg d}/\logg\logg d$ with non-negligible probability. Taking $f=1$ as before ensures the sharpness up to $\log\log$ terms of the sampling constant $B_d$. Naturally, we expect this to hold with high probability by relying on almost independence as before. However, an extra truncation argument would be needed since $f(z)=\mathcal{T}_a1(z)=e^{\overline{a}z-|a|^2/2}$ is not a polynomial. We did not pursue this further.
\end{rem}

\appendix
\section{Proof of Lemma \ref{lemlow}}\label{applem}

\begin{proof}[Proof of Lemma \ref{lemlow}]
    We assume that $1<L<2$ and track the dependence of the constants. Otherwise, the same argument yields constants depending on $L$.

    Notice that the separation condition \ref{its} ensures the estimate $\#E\cap B_r(0)\le \beta  r^2/\logg^2 r$ is satisfied for every $r\ge r_0$ since $r_0=2T(r_0)\ge T(0)$. The proof only makes use of this fact and the bound for $S$ in terms of $T$, rather than \ref{its}.
    
    Let us begin by showing \eqref{eqsup}.
    Notice that for $(m,n)\in A$, 
    \begin{align*}
        |\lambda_{mn}|\le |\lambda_{mn}-z_{mn}| + |z_{mn}|\le T( |\lambda_{mn}|) + |z_{mn}|\le |\lambda_{mn}|/2 + |z_{mn}|,
    \end{align*}
    In particular, 
    \begin{align}
    \label{eqlamt}
        |z_{mn}|\ge|\lambda_{mn}|/2 \ge T( |\lambda_{mn}|).
    \end{align}
    Now since $|z|\le \sqrt{2\pi/L}$, then for $m^2+n^2\ge 32$ sufficiently far away from 0,
    \[\Big|\frac{z}{z_{mn}}\Big|\le 2\Big|\frac{z}{\lambda_{mn}}\Big|
    \le \frac{2\sqrt{2}}{\sqrt{m^2+n^2}}\le\frac{1}{2}.\]
    Using again that $|z|\le \sqrt{2\pi/L}\le \sqrt{2\pi}$,
    \begin{align*}
        \Big|\log\Big(1-\frac{z}{z_{mn}}\Big)&+ \frac{z}{z_{mn}}+\frac{z^2}{2\lambda_{mn}^2}\Big|
        = \Big|\sum_{k=3}^\infty \frac{1}{k}\frac{z^k}{z_{mn}^k}+\frac{z^2}{2z_{mn}^2}-\frac{z^2}{2\lambda_{mn}^2}\Big|
        \\ &\le C \frac{1}{|z_{mn}|^3} + CT( |\lambda_{mn}|)\frac{|z_{mn}|+|\lambda_{mn}|}{|z_{mn}\lambda_{mn}|^2}\le \frac{CT( |\lambda_{mn}|)}{|\lambda_{mn}|^3}.
    \end{align*}
    The sum in $m$ and $n$ of the right-hand side term can be estimated in terms of
    \[\int_1^\infty \frac{T(x)}{x^2} dx\le \int_1^\infty \frac{T(1)+\logg x}{x^2} dx \le 2 T(1),\]
    which gives \eqref{eqsup}.
    
    For the remaining inequalities, we proceed as follows. Let $h(z)$ be given by
    \begin{align*}
        h(z)=\frac{g(z)}{\upsigma(z)}.
    \end{align*}
    By \eqref{eqsig},
    \begin{align*}
        |e^{-\tfrac{L}{2}|z|^2}g(z)|\ge c d(z,\Lambda) |h(z)|,
    \end{align*}
    so it suffices to establish a lower bound for $h(z)$. To achieve this, given $z\in \cc$, decompose $A=A_1\cup A_2$, where
    \[A_1=\{(m,n)\in A: \ |\lambda_{mn}|\le 4|z|\} \quad \text{ and} 
    \quad A_2=A \setminus A_1.\]
    Also write $A^c=B_0\cup B_1\cup B_2$ where
    \begin{align*}
        B_0&=\{(m,n)\in\zz^2: \ |\lambda_{mn}|< r_0\},
        \\ B_1&=\{(m,n)\in\zz^2: \ r_0\le |\lambda_{mn}|< 2|z|,\ z_{mn}\in E\},
        \\ B_2&=\{(m,n)\in\zz^2: \ \max\{r_0,2|z|\}\le |\lambda_{mn}|,\ z_{mn}\in E\}.
    \end{align*}
    Factorize $h$ as
    \begin{align}
        \label{eqh}
        h(z)=\frac{h_1(z)h_2(z)}{\upsigma_0(z)\upsigma_1(z)\upsigma_2(z)},
    \end{align}
    where for $i=1,2$,
    \begin{align*}
        \upsigma_0(z)&=z  \sideset{}{'}\prod_{B_0} \big(1-\frac{z}{\lambda_{mn}}\big)e^{\frac{z}{\lambda_{mn}}+\frac{1}{2}\frac{z^2}{\lambda_{mn}^2}},
        \\ \upsigma_i(z)&=  \prod_{B_i} \big(1-\frac{z}{\lambda_{mn}}\big)e^{\frac{z}{\lambda_{mn}}+\frac{1}{2}\frac{z^2}{\lambda_{mn}^2}}, \quad \text{and,}
        \\ h_i(z)&=\prod_{A_i}\frac{1-z/z_{mn}}{1-z/\lambda_{mn}}e^{ \frac{z}{z_{mn}}-\frac{z}{\lambda_{mn}}}.
    \end{align*}
    We need to estimate the absolute value of these functions, which we split in several steps.
    
    \textbf{Part 1. Bound for $|h_1|$.} When dealing with $h_1$ we can assume that $4|z|\geq r_0$ since it is defined in terms of $A_1$. From \eqref{eqlamt},
    \begin{align*}
        \big|\frac{1}{z_{mn}}-\frac{1}{\lambda_{mn}}\big|
        =\big|\frac{\lambda_{mn}-z_{mn}}{z_{mn}\lambda_{mn}}\big|
        \le \frac{2T(|\lambda_{mn}|)}{|\lambda_{mn}|^2}.
    \end{align*}
    A simple computation now leads to 
    \begin{align}
    \label{eqexp}
        \Big| \sum_{A_1} \big(\frac{1}{z_{mn}}-\frac{1}{\lambda_{mn}}\big)\Big|
        &\le \sum_{A_1} \frac{2T(|\lambda_{mn}|)}{|\lambda_{mn}|^2} \le C \int_{r_0}^{4|z|} \frac{T(x)}{x} dx 
       \le C \int_{T(1)}^{4|z|} \frac{T(1)+\logg x}{x} dx
        \\ & \le C T(1) \logg(|z|/T(1))+ C\logg^2(|z|).  \notag
    \end{align}
    This deals with the exponentials in the definition of $h_1$.
    Note that
    \[\frac{1-z/z_{mn}}{1-z/\lambda_{mn}}
    =\frac{\lambda_{mn}}{z_{mn}}\cdot \frac{z_{mn}- z}{\lambda_{mn} - z}
    =\frac{1-(\lambda_{mn}- z_{mn})/(\lambda_{mn}- z)}{1-(\lambda_{mn}- z_{mn})/\lambda_{mn}}.\]
    Thus,
    \begin{multline}
    \label{eqprod}
        \Big|\prod_{A_1}\frac{1-z/z_{mn}}{1-z/\lambda_{mn}}\Big|
        \\ \ge \prod_{\substack{A_1 \\ |\lambda_{mn}- z|<2T(|\lambda_{mn}|)}} \frac{|\lambda_{mn}||z_{mn}- z|}{|z_{mn}||\lambda_{mn}- z|}
        \prod_{\substack{A_1 \\ |\lambda_{mn}- z|\ge 2T(|\lambda_{mn}|)}}\frac{1-T(|\lambda_{mn}|)/|\lambda_{mn}- z|}{1+T(|\lambda_{mn}|)/|\lambda_{mn}|}.
    \end{multline}
    For the first product of the right-hand side we have
    \begin{align}
    \label{eqprod2}
        \prod_{\substack{A_1 \\ |\lambda_{mn}- z|<2T(|\lambda_{mn}|)}} &\frac{|\lambda_{mn}||z_{mn}- z|}{|z_{mn}||\lambda_{mn}- z|}
        \ge \prod_{\substack{A_1 \\ |\lambda_{mn}- z|<2T(|\lambda_{mn}|)}} \frac{|\lambda_{mn}||z_{mn}- z|}{(|\lambda_{mn}|+T(|\lambda_{mn}|))|\lambda_{mn}- z|}
        \\ &\ge  \prod_{\substack{A_1 \\ |\lambda_{mn}- z|<2T(|\lambda_{mn}|)}} 
        \frac{2}{3}\frac{|z_{mn}- z|}{|\lambda_{mn}- z|}\notag
        \\ &\ge \frac{1}{d(z,\Lambda)} \exp(-CT^2(4|z|))\prod_{\substack{A_1 \\ |\lambda_{mn}- z|<2T(|\lambda_{mn}|)}} |z_{mn}- z| \notag
        \\ &\ge \frac{d(z,\zc)}{d(z,\Lambda)} \exp(-CT^2(4|z|))\prod_{{\substack{A_1 \\ z_{mn}\in B_1(z_{m'n'})\smallsetminus\{z_{m'n'}\}}}} \frac{|z_{mn}- z_{m'n'}|}{2} \notag
        \\ &\ge \frac{d(z,\zc)}{d(z,\Lambda)} S(|z_{m'n'}|) \exp(-CT^2(4|z|)) \notag
        \\ &\ge \frac{d(z,\zc)}{d(z,\Lambda)} S(|z|+1) \exp(-CT^2(4|z|)), \notag
    \end{align}
    where $z_{m'n'}$ is the closest point to $z$ with $(m',n')\in A_1$ (if there is more than one such point we may arbitrarily choose one). Here we are assuming $\{z_{mn}\}_{(m,n)\in A_1}\cap B_1(z)\neq \varnothing$, since otherwise we can simply omit the term $ S(|z_{m'n'}|)$.
    For the second product of the right-hand side of \eqref{eqprod} we have
    \begin{multline*}
        \prod_{\substack{A_1 \\ |\lambda_{mn}- z|\ge 2T(|\lambda_{mn}|)}}\frac{1-T(|\lambda_{mn}|)/|\lambda_{mn}- z|}{1+T(|\lambda_{mn}|)/|\lambda_{mn}|} \\ \ge 
        \exp \Big(-C\sum_{r_0\le |\lambda_{mn}- z|\le 5|z|}\frac{T(|\lambda_{mn}|)}{|\lambda_{mn}-z|} -C\sum_{A_1} \frac{T(|\lambda_{mn}|)}{|\lambda_{mn}|}\Big)
         \ge \exp(-CT(4|z|)|z|).
    \end{multline*}
    Combining this with \eqref{eqexp} and \eqref{eqprod2}, and recalling that $T(x)\le T(1)+\logg x$ we get
    \begin{align}
    \label{eqh1}
        |h_1(z)|\ge \frac{d(z,\zc)}{d(z,\Lambda)} S(|z|+1) \exp(-C(T^2(1)+T(1)|z|\logg(|z|/T(1))+|z|\logg^2|z|)).
    \end{align}
    
    \textbf{Part 2. Bound for $|h_2|$.} Recall that for $(m,n)\in A_2$, $|\lambda_{mn}| \geq 4|z|$. Combining this with \eqref{eqlamt} yields
    \begin{align*}
        \left|\frac{z}{z_{mn}}\right|\le 2 \left|\frac{z}{\lambda_{mn}}\right| \le \frac{1}{2}.
    \end{align*}
    Now observe that
    \begin{align*}
        \Big|\log\Big(1-\frac{z}{z_{mn}}\Big)&-\log\Big(1-\frac{z}{\lambda_{mn}}\Big) + \frac{z}{z_{mn}}-\frac{z}{\lambda_{mn}}\Big|
        = \Big|\sum_{k=2}^\infty \frac{1}{k} \Big(\Big(\frac{z}{z_{mn}}\Big)^k-\Big(\frac{z}{\lambda_{mn}}\Big)^k\Big)\Big|
        \\ &=\Big|\Big(\frac{z}{z_{mn}}-\frac{z}{\lambda_{mn}}\Big)\sum_{k=1}^\infty \frac{z^k}{k+1} \sum_{j=0}^k \frac{1}{z_{mn}^j\lambda_{mn}^{k-j}}\Big|
        \\ &\le \frac{2|z|T(|\lambda_{mn}|)}{|z_{mn}\lambda_{mn}|}\sum_{k=1}^\infty  \Big|\frac{2z}{\lambda_{mn}}\Big|^k\le \frac{CT(|\lambda_{mn}|)|z|^2}{|\lambda_{mn}|^3}.
    \end{align*}
    Thus,
    \begin{align}
    \label{eqh2}
        |h_2(z)|&\ge \exp\Big(-C|z|^2\sum_{A_2} \frac{T(|\lambda_{mn}|)}{|\lambda_{mn}|^3}\Big) \ge \exp\Big(-C|z|^2\int_{\max\{4|z|,r_0\}}^\infty \frac{T(x)}{x^2} dx\Big)
        \\& \ge \exp\Big(-C|z|^2\int_{4|z|}^\infty \frac{T(1)+\logg x}{x^2} dx\Big)\ge \exp(-C|z|(T(1)+\logg|z|)). \notag
    \end{align}
    
    \textbf{Part 3. Bound for $|\upsigma_0|$.} First notice that by a symmetry argument,
    \[\sideset{}{'}\sum_{B_0}\frac{1}{\lambda_{mn}}=\sideset{}{'}\sum_{B_0}\frac{1}{\lambda_{mn}^2}=0,\]
    as a result,
    \begin{align*}
        \sideset{}{'}\prod_{B_0} e^{\frac{z}{\lambda_{mn}}+\frac{1}{2}\frac{z^2}{\lambda_{mn}^2}}=1.
    \end{align*}
    Plugging this into the definition of $\upsigma_0$ we arrive at
    \begin{align}
       \label{eqsig1}
        \begin{split}
            |\upsigma_0(z)|=\Big|z  \sideset{}{'}\prod_{B_0} \big(1-\frac{z}{\lambda_{mn}}\big)\Big| 
            &\le |z| \sideset{}{'}\prod_{B_0} (1+\sqrt{L/\pi}|z|)\\
            &\le |z| \sideset{}{'}\prod_{B_0} (1+\sqrt{2/\pi}|z|)\\
            &\le \exp(Cr_0^2 \logg|z|)\le \exp(CT(1)^2 \logg|z|),
        \end{split}
    \end{align}
where in the last step we used that 
\begin{align}
\label{eqr0}
    r_0\le 8 T(1),
\end{align}
since 
\[r_0=2 T(r_0)\le 2(T(1)+\log r_0)\le 2(T(1)+r_0/e).\]

    \textbf{Part 4. Bound for $|\upsigma_1|$.}  When dealing with $\upsigma_1$ we can assume that $2|z|\geq r_0$ since it is defined in terms of $B_1$. Let $\{B_{1,k}\}_{k=0}^K$ be the partition of $B_1$ given by
    \[B_{1,k}=\{(m,n)\in B_1 : 2^{k}r_0\le |\lambda_{mn}|<2^{k+1}r_0 \},\]
    where $2^{K-1}r_0<|z|\le 2^{K}r_0$.  Note that for $(m,n)\in B_{1,k}$, $|\lambda_{mn}|\ge r_0$ and so,
    \[|z_{mn}|\le |\lambda_{mn}|+T(|\lambda_{mn}|)\le \tfrac{3}{2}|\lambda_{mn}|< 2^{k+2}r_0.\]
Consequently, for every $0\le k\le K$,
    \[\# B_{1,k}\le \# E\cap B_{2^{k+2}r_0}(0)\le \beta  4^{k+2} r_0^2(\logg (2^{k+2}r_0))^{-2}.\]
    With this at hand we see that
    \begin{align}
    \label{eqrec}
        \sum_{B_1} \frac{1}{|\lambda_{mn}|} &\le 16 \beta  \sum_{k=0}^K   \frac{4^{k}r_0^2(\logg (2^{k+2}r_0))^{-2}}{2^k r_0} \le  C \beta   2^{K+1} r_0(\logg r_0)^{-2} \le C \beta  |z| (\logg r_0)^{-2}.
    \end{align}
    Similarly,
    \begin{align*}
        \sum_{B_1} \frac{1}{|\lambda_{mn}|^2} &\le 4\beta  \sum_{k=0}^K  (\logg (2^{k+2}r_0))^{-2} \le 4\beta  \sum_{k=0}^\infty  \frac{1}{((k+2) \log 2+\log r_0)^2}
        \le \frac{C \beta } {\logg r_0}.
    \end{align*}
    Combining this with \eqref{eqr0} and \eqref{eqrec}, 
    \begin{align}
        \label{eqsig2}
        |\upsigma_1(z)| &\le   \prod_{B_1} \Big|\frac{\lambda_{mn}-z}{\lambda_{mn}}\Big| \exp(C \beta  |z|^2/\logg r_0)
        \le (\frac{3|z|}{r_0})^{\# B_1} \exp(C \beta   |z|^2/\logg T(1))
        \\ &\le (\frac{3}{2}|z|)^{C \beta   |z|^2/\logg^2(3|z|)} \exp(C \beta   |z|^2/\logg T(1))
        \le \exp(C \beta  |z|^2/\logg T(1)). \notag
    \end{align}
    
    \textbf{Part 5. Bound for $|\upsigma_2|$.} Similar to before, let $\{B_{2,k} \}_{k\ge k_0}$ be the partition of $B_2$ given by
    \[B_{2,k}=\{(m,n)\in B_2 : 2^{k}|z|\le |\lambda_{mn}|<2^{k+1}|z| \},\]
    where $k_0\in\nn$ is the minimal index satisfying $2^{k_0+1}|z|\ge \max\{r_0,2|z|\}$. For $(m,n)\in B_{2,k}$ we have that $|\lambda_{mn}|\ge r_0$ and so,
    \[|z_{mn}|\le |\lambda_{mn}|+T(|\lambda_{mn}|)\le \tfrac{3}{2}|\lambda_{mn}|<  2^{k+2}|z| .\]
     Since $2^{k+2}|z|>2^{k_0+1}|z|\ge r_0$, for every $ k\in \nn$,
    \[\# B_{2,k}\le \# E\cap B_{ 2^{k+2}|z|}(0)\le  \beta   4^{k+2} |z|^2(\logg ( 2^{k+2}|z|))^{-2}.\]
    As a result,
    \begin{align*}
        \sum_{B_2} \frac{1}{|\lambda_{mn}|^3} &\le \frac{C \beta }{|z|} \sum_{k=k_0}^\infty  2^{-k} (\logg (2^{k+2}|z|))^{-2} \le \frac{C \beta  }{|z|\logg^2 r_0} \sum_{k=1}^\infty  2^{-k} \le  \frac{C \beta  }{|z|\logg^2 T(1)}.
    \end{align*}
    Consequently,
    \begin{multline*}
        \Big|\sum_{B_2}\log\Big(1-\frac{z}{\lambda_{mn}}\Big) + \frac{z}{\lambda_{mn}}+\frac{z^2}{2\lambda_{mn}^2}\Big|
        = \Big|\sum_{B_2} \sum_{\ell=3}^\infty \frac{1}{\ell}\Big(\frac{z}{\lambda_{mn}}\Big)^\ell\Big|
        \\ \le \sum_{B_2} \Big|\frac{z}{\lambda_{mn}}\Big|^3 \sum_{\ell=3}^\infty \frac{1}{\ell}\Big(\frac{1}{2}\Big)^{\ell-3}
        \leq \frac{C \beta   |z|^2}{\logg^2 T(1)}. 
    \end{multline*}

   Therefore,
    \begin{align}
        \label{eqsig3}
        |\upsigma_2(z)| \le \exp(C \beta  |z|^2(\logg T(1))^{-2}). 
    \end{align}

    \textbf{Part 6. Proof of \eqref{eqinf} and \eqref{eqinf2}.}
    From \eqref{eqsig} and \eqref{eqh}, we get
    \begin{align*}
        |e^{-\tfrac{L}{2}|z|^2}g(z)|\ge c d(z,\Lambda) |h(z)| =c d(z,\Lambda) \frac{h_1(z)h_2(z)}{\upsigma_0(z)\upsigma_1(z)\upsigma_2(z)}.
    \end{align*}
    Now applying the previous estimates for $\upsigma_i$ and $h_i$ from \eqref{eqh1}, \eqref{eqh2}, \eqref{eqsig1}, \eqref{eqsig2} and \eqref{eqsig3}, we obtain 
    \begin{align*}
        |e^{-\tfrac{L}{2}|z|^2}g(z)| &\ge c d(z,\zc) S(1 + |z|)
        \begin{multlined}[t]
             \exp\big(-C(T^2(1) \logg|z|+T(1)|z|\logg(|z|/T(1))
        \\ + |z|\logg^2|z| 
         + \beta  |z|^2/\logg T(1))\big)
         \end{multlined}
        \\& \ge c d(z,\zc)  \exp\big(-C \beta  (T^2(1) \logg^4 T(1)+|z|^2/\logg T(1) )\big),
    \end{align*}
    where the last step follows from estimating $S$ in terms of $T$ and bounding each term in the exponent according to whether $|z|$ is large or not. This proves \eqref{eqinf}. Noting that
    \[|e^{-\tfrac{L}{2}|z_{mn}|^2}g'(z_{mn})|=\lim_{z\to z_{mn}} \frac{|e^{-\tfrac{L}{2}|z|^2}g(z)|}{|z-z_{mn}|},\]
    and applying the previous bound then proves \eqref{eqinf2}.
\end{proof}
\bibliographystyle{abbrv}
\bibliography{biblio}

\begin{thebibliography}{10}

\bibitem{AmRo}
Y.~Ameur and J.~L. Romero.
\newblock The planar low temperature {C}oulomb gas: separation and equidistribution.
\newblock {\em Rev. Mat. Iberoam.}, 39(2):611--648, 2023.

\bibitem{AnCaRo}
J.~A. Antezana, D.~A. Carbajal, and J.~L. Romero.
\newblock Random periodic sampling patterns for shift-invariant spaces.
\newblock {\em IEEE Trans. Inform. Theory}, 70(6):3855--3863, 2024.

\bibitem{BaGr2}
R.~F. Bass and K.~Gr\"{o}chenig.
\newblock Random sampling of multivariate trigonometric polynomials.
\newblock {\em SIAM J. Math. Anal.}, 36(3):773--795, 2004.

\bibitem{BaGr}
R.~F. Bass and K.~Gr\"{o}chenig.
\newblock Random sampling of bandlimited functions.
\newblock {\em Israel J. Math.}, 177:1--28, 2010.

\bibitem{BaGr3}
R.~F. Bass and K.~Gr\"{o}chenig.
\newblock Relevant sampling of band-limited functions.
\newblock {\em Illinois J. Math.}, 57(1):43--58, 2013.

\bibitem{BeOC}
B.~Berndtsson and J.~Ortega~Cerd\`a.
\newblock On interpolation and sampling in {H}ilbert spaces of analytic functions.
\newblock {\em J. Reine Angew. Math.}, 464:109--128, 1995.

\bibitem{Bro}
C.~W. Borchardt.
\newblock Bestimmung der symmetrischen {V}erbindungen vermittelst ihrer erzeugenden {F}unction.
\newblock {\em J. Reine Angew. Math.}, 53:193--198, 1857.

\bibitem{BoLuMa}
S.~Boucheron, G.~Lugosi, and P.~Massart.
\newblock {\em Concentration inequalities}.
\newblock Oxford University Press, Oxford, 2013.
\newblock A nonasymptotic theory of independence, With a foreword by Michel Ledoux.

\bibitem{BuQiSh}
A.~I. Bufetov, Y.~Qiu, and A.~Shamov.
\newblock Kernels of conditional determinantal measures and the {L}yons-{P}eres completeness conjecture.
\newblock {\em J. Eur. Math. Soc. (JEMS)}, 23(5):1477--1519, 2021.

\bibitem{CaRoTa}
E.~J. Cand\`es, J.~Romberg, and T.~Tao.
\newblock Robust uncertainty principles: exact signal reconstruction from highly incomplete frequency information.
\newblock {\em IEEE Trans. Inform. Theory}, 52(2):489--509, 2006.

\bibitem{CaTa}
E.~J. Cand\`es and T.~Tao.
\newblock Near-optimal signal recovery from random projections: universal encoding strategies?
\newblock {\em IEEE Trans. Inform. Theory}, 52(12):5406--5425, 2006.

\bibitem{ChLy}
G.~Chistyakov and Y.~Lyubarskii.
\newblock Random perturbations of exponential {R}iesz bases in {$L^2(-\pi,\pi)$}.
\newblock {\em Ann. Inst. Fourier (Grenoble)}, 47(1):201--255, 1997.

\bibitem{ChLyPa}
G.~Chistyakov, Y.~Lyubarskii, and L.~Pastur.
\newblock On completeness of random exponentials in the {B}argmann-{F}ock space.
\newblock {\em J. Math. Phys.}, 42(8):3754--3768, 2001.

\bibitem{MR3236788}
S.~N. Chiu, D.~Stoyan, W.~S. Kendall, and J.~Mecke.
\newblock {\em Stochastic geometry and its applications}.
\newblock Wiley Series in Probability and Statistics. John Wiley \& Sons, Ltd., Chichester, third edition, 2013.

\bibitem{Donoho}
D.~L. Donoho.
\newblock Compressed sensing.
\newblock {\em IEEE Trans. Inform. Theory}, 52(4):1289--1306, 2006.

\bibitem{ElSpYa}
D.~Elboim, Y.~Spinka, and O.~Yakir.
\newblock Optimal matchings of randomly perturbed lattices.
\newblock {\em arXiv: 2506.16873}, 2025.

\bibitem{feng2024smallest}
R.~Feng and D.~Yao.
\newblock Smallest distances between zeros of gaussian analytic functions.
\newblock {\em Submitted for publication}, 2024.

\bibitem{FuXi}
H.~F\"{u}hr and J.~Xian.
\newblock Relevant sampling in finitely generated shift-invariant spaces.
\newblock {\em J. Approx. Theory}, 240:1--15, 2019.

\bibitem{GrJo}
P.~R. Graves-Morris and C.~R. Johnson.
\newblock Confluent {H}ermitian {C}auchy matrices are diagonally signed.
\newblock {\em Linear Algebra Appl.}, 142:129--140, 1990.

\bibitem{Gro}
K.~Gr\"{o}chenig.
\newblock {\em Foundations of time-frequency analysis}.
\newblock Applied and Numerical Harmonic Analysis. Birkh\"{a}user Boston, Inc., Boston, MA, 2001.

\bibitem{GrOC}
K.~Gr\"{o}chenig and J.~Ortega-Cerd\`a.
\newblock Marcinkiewicz-{Z}ygmund inequalities for polynomials in {F}ock space.
\newblock {\em Math. Z.}, 302(3):1409--1428, 2022.

\bibitem{HKPV}
J.~B. Hough, M.~Krishnapur, Y.~Peres, and B.~Vir\'{a}g.
\newblock {\em Zeros of {G}aussian analytic functions and determinantal point processes}, volume~51 of {\em University Lecture Series}.
\newblock American Mathematical Society, Providence, RI, 2009.

\bibitem{Kr}
M.~Krishnapur.
\newblock Overcrowding estimates for zeroes of planar and hyperbolic {G}aussian analytic functions.
\newblock {\em J. Stat. Phys.}, 124(6):1399--1423, 2006.

\bibitem{KuRa}
S.~Kunis and H.~Rauhut.
\newblock Random sampling of sparse trigonometric polynomials. {II}. {O}rthogonal matching pursuit versus basis pursuit.
\newblock {\em Found. Comput. Math.}, 8(6):737--763, 2008.

\bibitem{LiSuXi}
Y.~Li, Q.~Sun, and J.~Xian.
\newblock Random sampling and reconstruction of concentrated signals in a reproducing kernel space.
\newblock {\em Appl. Comput. Harmon. Anal.}, 54:273--302, 2021.

\bibitem{LyZh}
R.~Lyons and A.~Zhai.
\newblock Zero sets for spaces of analytic functions.
\newblock {\em Ann. Inst. Fourier (Grenoble)}, 68(6):2311--2328, 2018.

\bibitem{NaSo2}
F.~Nazarov and M.~Sodin.
\newblock Random complex zeroes and random nodal lines.
\newblock In {\em Proceedings of the {I}nternational {C}ongress of {M}athematicians. {V}olume {III}}, pages 1450--1484. Hindustan Book Agency, New Delhi, 2010.

\bibitem{NaSo}
F.~Nazarov and M.~Sodin.
\newblock Fluctuations in random complex zeroes: asymptotic normality revisited.
\newblock {\em Int. Math. Res. Not. IMRN}, 2011(24):5720--5759, 2011.

\bibitem{NaSo3}
F.~Nazarov and M.~Sodin.
\newblock Correlation functions for random complex zeroes: strong clustering and local universality.
\newblock {\em Comm. Math. Phys.}, 310(1):75--98, 2012.

\bibitem{NaSoVo2}
F.~Nazarov, M.~Sodin, and A.~Volberg.
\newblock Transportation to random zeroes by the gradient flow.
\newblock {\em Geom. Funct. Anal.}, 17(3):887--935, 2007.

\bibitem{NaSoVo}
F.~Nazarov, M.~Sodin, and A.~Volberg.
\newblock The {J}ancovici-{L}ebowitz-{M}anificat law for large fluctuations of random complex zeroes.
\newblock {\em Comm. Math. Phys.}, 284(3):833--865, 2008.

\bibitem{Of}
A.~C. Offord.
\newblock The distribution of zeros of power series whose coefficients are independent random variables.
\newblock {\em Indian J. Math.}, 9:175--196, 1967.

\bibitem{PeVi}
Y.~Peres and B.~Vir\'{a}g.
\newblock Zeros of the i.i.d. {G}aussian power series: a conformally invariant determinantal process.
\newblock {\em Acta Math.}, 194(1):1--35, 2005.

\bibitem{RaSi}
S.~Raj and S.~Sivananthan.
\newblock Gabor frames generated by random-periodic time-frequency shifts.
\newblock {\em arXiv: 2503.20259}, 2025.

\bibitem{Se}
K.~Seip.
\newblock Density theorems for sampling and interpolation in the {B}argmann-{F}ock space. {I}.
\newblock {\em J. Reine Angew. Math.}, 429:91--106, 1992.

\bibitem{SeUl}
K.~Seip and A.~M. Ulanovskii.
\newblock Random exponential frames.
\newblock {\em J. London Math. Soc. (2)}, 53(3):560--568, 1996.

\bibitem{SeWa}
K.~Seip and R.~Wallst\'{e}n.
\newblock Density theorems for sampling and interpolation in the {B}argmann-{F}ock space. {II}.
\newblock {\em J. Reine Angew. Math.}, 429:107--113, 1992.

\bibitem{So}
M.~Sodin.
\newblock Zeros of {G}aussian analytic functions.
\newblock {\em Math. Res. Lett.}, 7(4):371--381, 2000.

\bibitem{SoTs}
M.~Sodin and B.~Tsirelson.
\newblock Random complex zeroes. {I}. {A}symptotic normality.
\newblock {\em Israel J. Math.}, 144:125--149, 2004.

\bibitem{SoTs3}
M.~Sodin and B.~Tsirelson.
\newblock Random complex zeroes. {III}. {D}ecay of the hole probability.
\newblock {\em Israel J. Math.}, 147:371--379, 2005.

\bibitem{SoTs2}
M.~Sodin and B.~Tsirelson.
\newblock Random complex zeroes. {II}. {P}erturbed lattice.
\newblock {\em Israel J. Math.}, 152:105--124, 2006.

\bibitem{Va}
Z.~Vav\v{r}\'{\i}n.
\newblock Confluent {C}auchy and {C}auchy-{V}andermonde matrices.
\newblock {\em Linear Algebra Appl.}, 258:271--293, 1997.

\bibitem{Wa}
D.~F. Walnut.
\newblock Continuity properties of the {G}abor frame operator.
\newblock {\em J. Math. Anal. Appl.}, 165(2):479--504, 1992.

\bibitem{ZaCo}
F.~Zabini and A.~Conti.
\newblock Ginibre sampling and signal reconstruction.
\newblock In {\em 2016 IEEE International Symposium on Information Theory (ISIT)}, pages 865--869, 2016.

\bibitem{Zhu}
K.~Zhu.
\newblock {\em Analysis on {F}ock spaces}, volume 263 of {\em Graduate Texts in Mathematics}.
\newblock Springer, New York, 2012.

\end{thebibliography}

\end{document}